\documentclass[leqno]{amsart}
\usepackage{amsmath,amsfonts,amssymb,amsthm,amscd,latexsym}
\usepackage{color}
\usepackage{enumerate}
\numberwithin{equation}{section}

\newcommand{\cF}{{\mathcal F}}
\newtheorem{thm}{Theorem}
\newtheorem{theorem}[thm]{Theorem}
\newtheorem{lem}[thm]{Lemma}

\newtheorem{definition}[thm]{Definition}
\newtheorem{corollary}[thm]{Corollary}

\newtheorem{proposition}[thm]{Proposition}

\newcommand{\norm}[1]{\left\lVert#1\right\rVert}

\newcommand{\bE}{{\mathbb E}}
\newcommand{\cS}{{\mathcal S}}
\usepackage{hyperref}				
\hypersetup{colorlinks,
	linkcolor=blue,%
	citecolor=blue}	
\begin{document}

\title[Optimal range of  Haar martingale transforms]{
Optimal range  of  Haar martingale transforms and its applications
}

\author[S. Astashkin]{Sergey Astashkin}
\address[Sergey Astashkin]{Department of Mathematics, Samara National Research University, Moskovskoye
shosse 34, 443086, Samara, Russia }
\email{\textcolor[rgb]{0.00,0.00,1}{astash56@mail.ru}
 }

\author[J. Huang]{Jinghao Huang}
\address[Jinghao Huang]{School of Mathematics and Statistics, University of New South Wales,
Kensington, 2052, Australia}
\email{\textcolor[rgb]{0.00,0.00,1}{jinghao.huang@unsw.edu.au}}

\author[M. Pliev]{Marat Pliev}
\address[Marat Pliev]{Southern Institute of Mathematics (SIM)
Russian Academy of Sciences, Vladikavkaz Scientific Center
362027, Vladikavkaz
Russia and North Caucasus Center for Mathematical Research
Vladikavkaz,  362025 Russia}
\email{\textcolor[rgb]{0.00,0.00,1}{plimarat@yandex.ru}}

\author[F. Sukochev]{Fedor Sukochev}
\address[Fedor Sukochev]{School of Mathematics and Statistics, University of New South Wales,
Kensington, 2052, Australia; North-Ossetian State University, Vladikavkaz, Russia, 362025}
\email{\textcolor[rgb]{0.00,0.00,1}{f.sukochev@unsw.edu.au}}

\author[D. Zanin]{Dmitriy Zanin}
\address[Dmitriy Zanin]{School of Mathematics and Statistics, University of New South Wales,
Kensington, 2052, Australia}
\email{\textcolor[rgb]{0.00,0.00,1}{d.zanin@unsw.edu.au}}

\thanks{The work of the first author was completed as a part of the implementation of the development program of the Scientific and Educational Mathematical Center Volga Federal District, agreement no. 075-02-2022-878.}

\thanks{F. Sukochev's research is supported by the ARC}

\subjclass[2010]{46E30,   47B60,    44A15 }
\keywords{Haar functions, martingale transform, Calder\'{o}n operator, symmetric function space, basis projection, optimal symmetric range, Hilbert transform, narrow operator.}
\date{}
\begin{abstract}
Let $(\cF_n)_{n\ge 0}$  be the standard dyadic filtration on $[0,1]$.
Let $\bE_{\cF_n}$ be the conditional expectation from $ L_1=L_1[0,1]$ onto $\cF_n$, $n\ge 0$, and let $\bE_{\cF_{-1}} =0$.
We present  the sharp     estimate for the distribution function of the martingale transform $T$ defined by
\begin{align*}
Tf=\sum_{m=0}^\infty
\left(  \mathbb{E}_{\mathcal{F}_{2m}}
f-\mathbb{E}_{\mathcal{F}_{2m-1}}f \right), ~f\in L_1,
\end{align*}
in terms of the classical Calder\'{o}n operator.
  As an application, for a given symmetric function space $E$ on $[0,1]$, we identify the symmetric space $\mathcal{S}_E$,  the optimal Banach symmetric range of martingale transforms/Haar basis projections acting on $E$.
\end{abstract}

\maketitle

\section{Introduction}

Recall that the Haar system is formed by the functions $h_{0,0}(t)=h_{1}(t)=1$,
\[
 h_{n,k}(t)=h_{2^n+k}(t)=\left\{
 \begin{array}{l}
  1, t\in \Delta_{n+1}^{2k-1}\\
  -1, t\in \Delta_{n+1}^{2k}\\
  0, \text{ for all other }t\in[0,1],
 \end{array}
\right.
\]
where $n=0,1,\ldots$, $k=1,\dots,2^n$ and $\Delta_{m}^{j}=((j-1)2^{-m}, j2^{-m})$, $m=1,2,\dots$, $j=1,\dots,2^m$. It is well known  (see e.g., \cite[Ch.~3]{KS} or \cite[Proposition II.2.c.1]{LT})  that this system is a basis in $ L_p=L_p[0,1]$ for all $1\le p<\infty$ and even in every separable symmetric function space \cite[Proposition II.2.c.1]{LT}. Moreover, according to a  remarkable result due to Paley \cite{Paley} (see also \cite[Theorem II.2.c.5.]{LT} or \cite[\S\,3.3]{KS}):
\begin{quote}
  The Haar system $\{h_n\}_{n=1}^\infty $ is an unconditional basis in $L_{p}$ for every
 $1<p<\infty $.
 \end{quote}
This result turned to be extremely rich in its connections with many important problems of interest in analysis and probability theory. In particular, it served as the starting point for in-depth research undertaken by Burkholder, who has obtained sharp inequalities of Paley type for general classes of martingale transforms (see \cite{Bur, Burkholder-82, Burkholder-84}).

Nowadays, martingale transforms  provides insights  not only into probability and statistics  but also into harmonic analysis,   geometry of  various classes of Banach spaces, operator algebras and  mathematical physics (see e.g. \cite{Banuelos,Burkholder-01, Francia,HNVW} and references therein).
It is worth to note that the properties of the transformed martingales differ markedly from those of initial martingales (see e.g. the remark at the very beginning of \cite{Burkholder-84},   ``... there do exist small martingales with large  transforms''). The main result of this paper, the sharp  estimate for the distribution function of the Haar martingale transform in terms of the classical Calder\'{o}n operator,  indicates that Burkholder's remark remains relevant  also in this setting.

 We detail now our setting.
   Let $(\cF_n)_{n\ge 0}$  be the standard dyadic filtration  on $[0,1]$. Given an arbitrary sequence $\epsilon=\{\epsilon_n\}_{n\geq0},$ with $\epsilon_n\in\{-1,0,1\},$ $n\geq0,$ we consider a class of special martingale transforms  $T_\epsilon$ defined by
\begin{align}\label{T_espilon}
T_{\epsilon}x=\sum_{n\geq0}\epsilon_n \cdot (\mathbb{E}_{n}x-\mathbb{E}_{n-1 }x),~x\in L_1,
\end{align}
where $\mathbb{E}_{n}$ is the conditional expectation from $L_1$ onto $\cF_n,$ $n\ge 0,$ and the series are understood in the sense of convergence in measure.

The   classical  Calder\'{o}n operator $S$  is defined by
\begin{align}\label{Calder}
(Sx)(t):= \frac{1}{t }\int_0^t x(s)\, ds +\int_t^{1}\frac{x(s)}{s}\,ds, ~x\in  L_1 .
\end{align}

 Our main interest in this paper lies in the comparison of the distribution functions of elements $|T_\epsilon x|$ and $|S(x)|$, or equivalently,  of their decreasing right-continuous rearrangements $\mu(T_{\epsilon}x)$ and $\mu(Sx),$ respectively.
Any martingale transform $T_\epsilon$ of the form  \eqref{T_espilon} is a contraction in $L_2$  and is of   weak type $(1,1)$ with constant $2$ (this can be derived from \cite[Theorem~3.3.7]{KS} or \cite[Theorem~5.1]{NS}). Moreover, it is self-adjoint in the sense that
$$\int_0^1 T_\epsilon x(s)y(s)\,ds= \int_0^1 x(s) T_\epsilon y(s)\,ds,\quad x,y\in L_2.$$
 Therefore, $T_\epsilon$  has an upper pointwise estimate given by the operator $S$: there exists a constant $C_{{\rm abs}}$ such that
\begin{align}\label{above estimate}
\mu(T_{\epsilon}x)\leq C_{{\rm abs}}S\mu(x), ~\forall \epsilon=\{\epsilon_m\}_{m\geq0}\;\mbox{and}\;~\forall x\in L_1
\end{align}
(see e.g. \cite[Appendix]{cal}, \cite[Proposition 5.2.2, p.~50]{jm}, \cite{semen}, \cite[Example 4.15]{AstMil21} and  \cite{JSWZ}, \cite{STZ} for a more general setting).
The estimates  of the type \eqref{above estimate} are well known not only for transforms $T_{\epsilon}$  but also for other classical operators such as   the Hilbert transform and the conjugate-function operator  (see, for instance, \cite[\S\,2]{Sh-80} and \cite[Theorem~3.6.10]{BSh}), which, in fact, admit a converse.   However, the case of the converse estimate for martingale transforms remains open, and we state it here as follows
\begin{quote}
\begin{center}
\emph{Is estimate \eqref{above estimate}   optimal?}
\end{center}
\end{quote}
A similar question was also raised in \cite{HSZ} in the  non-commutative setting.
The main result of the present paper not only answers this question in the affirmative, but it also shows that the required optimality is achieved in fact  by just {\it one} operator $T=T_{\epsilon}$ with $\epsilon=\{1,0,1,0,\cdots\}_{m\geq0},$ i.e.
\begin{align}\label{T-0}
T x:=\sum_{m\geq0}    (\mathbb{E}_{{2m}}x-\mathbb{E}_{2m-1 }x),~x\in L_1.
\end{align}

  In order to state the main result, Theorem \ref{main thm} below, we recall that the dilation operator $\sigma_s$, $s>0$ (on the linear space of all measurable functions on $[0,1]$) is defined by
$\sigma_s x (t) = x(t/s) \chi_{_{[0,1]}} (t/s)$, $t\in [0,1]$.


\begin{theorem}\label{main thm}
For every function  $x\in L_1$ there exists $f\in L_1$ such that
$$
|f|\leq   3\sigma_4\mu(x)\;\;\mbox{and}\;\;\sigma_{\frac{1}{8}}S\mu(x) \le 12 \mu(Tf).$$
\end{theorem}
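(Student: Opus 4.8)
The plan is to reduce at once to the case where $x=\mu(x)$ is non-increasing (only $\mu(x)$ enters the statement), to use the layer-cake decomposition $\mu(x)=\int_0^\infty\chi_{(0,r(\lambda))}\,d\lambda$ with $r(\lambda)=|\{\mu(x)>\lambda\}|$, and to build $f$ by superposition, $f:=\int_0^\infty f_\lambda\,d\lambda$, where each $f_\lambda$ is a concrete ``solution of the problem for the indicator $\chi_{(0,r(\lambda))}$''. Since $T$ and the Calder\'on splitting $S=P+Q$, $Py(t)=\frac1t\int_0^t y$, $Qy(t)=\int_t^1\frac{y(s)}{s}\,ds$, are linear and $\int_0^\infty\chi_{(0,r(\lambda))}\,d\lambda=\mu(x)$, the requirement $|f_\lambda|\le 3\sigma_4\chi_{(0,r(\lambda))}$ will give $|f|\le 3\sigma_4\mu(x)$ by the triangle inequality; the content is in choosing the $f_\lambda$ so that integrating over $\lambda$ does not destroy the size of $Tf$. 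It is convenient to note that $T$ is exactly (averaging) $+$ (Haar projection onto the odd levels $1,3,5,\dots$), so $T$ fixes the leftmost Haar functions $h_{n,1}$ with $n$ odd and annihilates those with $n$ even.

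Two building blocks, adapted to the two halves of $S$, are used. For the Hardy part $P$ one takes scaled dyadic indicators: an explicit Haar expansion of $\chi_{(0,2^{-m})}$ shows that $T\chi_{(0,2^{-m})}$ is \emph{constant} on every dyadic ring $R_l:=(2^{-l-1},2^{-l})$, of size comparable to $\min(1,2^{l-m})$, and --- the crucial point --- \emph{strictly positive on $R_l$ whenever $l$ is even, for every $m$}. For the dual part $Q$ one uses a Haar ``staircase'' supported in $(0,4r)$: a telescoping sum $\sum_{j\ge1}c_j\,(h_{n_j,1}-h'_j)$, where $n_1<n_2<\cdots$ are odd levels starting near $\log_2(1/r)$, $h_{n_j,1}$ is the leftmost Haar function at level $n_j$, and $h'_j$ is the Haar function on the left half of its support (at the next, even, level). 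The summands have pairwise disjoint supports, so the amplitude of the staircase is $\le 2\sup_j c_j$, while $Th'_j=0$ and $Th_{n_j,1}=h_{n_j,1}$, whence $T(\text{staircase})=\sum_j c_j h_{n_j,1}$ equals, on $R_l$, the constant $c\cdot\#\{j:n_j<l\}$ --- \emph{nonnegative on every ring, and of size $\asymp\log(r/2^{-l})$ once $l>n_1$}. Accordingly one sets $f_\lambda:=\tfrac32\chi_{(0,\delta(\lambda))}+(\text{such a staircase of amplitude}\le\tfrac32\text{ inside }(0,4r(\lambda)))$, with $\delta(\lambda)$ a dyadic number with $\delta(\lambda)\le 4r(\lambda)$ and $\delta(\lambda)\asymp r(\lambda)$.

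Now one works on the even rings only. On $R_l$ with $l$ even, both $T\chi_{(0,\delta(\lambda))}$ and $T(\text{staircase}_\lambda)$ are $\ge 0$ for every $\lambda$, so there $Tf=\int_0^\infty Tf_\lambda\,d\lambda\ge 0$ and the $\lambda$-integration is purely constructive: $Tf|_{R_l}=\int_0^\infty (Tf_\lambda)|_{R_l}\,d\lambda$. Combining the two pointwise lower bounds above with the elementary identities, valid for non-increasing $\mu(x)$,
\begin{align*}
\int_0^\infty\min\!\Big(1,\frac{r(\lambda)}{t}\Big)\,d\lambda=P\mu(x)(t),\qquad \int_0^\infty\ln^+\!\Big(\frac{r(\lambda)}{t}\Big)\,d\lambda=Q\mu(x)(t)
\end{align*}
(each obtained by Fubini from $\int_0^\infty\chi_{\{r(\lambda)>s\}}\,d\lambda=\mu(x)(s)$), and using $S=P+Q$, one gets $|Tf|\ge c_0\,S\mu(x)(2^{-l})$ on every even ring $R_l$ with an explicit $c_0>0$. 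Since the even rings below scale $2^{-l}$ have total measure $>\tfrac23 2^{-l}$, this yields $\mu(Tf)(t)\ge c_0\,S\mu(x)(2^{-l})$ whenever $t<\tfrac23 2^{-l}$; choosing, for given $t<1/8$, the largest even $l$ with $2^{-l}>\tfrac32 t$, so that also $2^{-l}\le 6t<8t$, and using that $S\mu(x)$ is non-increasing, one concludes $\mu(Tf)(t)\ge c_0\,S\mu(x)(8t)=c_0\,\sigma_{1/8}S\mu(x)(t)$, i.e. $\sigma_{1/8}S\mu(x)\le (1/c_0)\,\mu(Tf)$.

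The main obstacle is this sign/constant bookkeeping. For the superposition over $\lambda$ to work one genuinely needs the sign of $T\chi_{(0,2^{-m})}$ and of $T(\text{staircase})$ on each even ring to be independent of the scale parameter, which forces one to evaluate these transforms \emph{exactly} (not merely up to constants) on each ring and to check positivity there; arranging the odd/even level pattern of the staircase and the dilation factor $4$ in $\sigma_4$ so that this holds, and then tracking every numerical factor through the two identities above and through the final rearrangement step so that the outcome really is $1/c_0\le 12$, is where the delicacy lies.
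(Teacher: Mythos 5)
Your construction is sound and reaches the theorem by a route that is organizationally different from the paper's, although the two share their key structural ingredients. The paper does not use a layer-cake superposition: it discretizes $\mu(x)$ directly at dyadic points and writes down two explicit functions, $f_1=\sum_{n\ge0}(-1)^{n+1}\mu(2^{-n-1};x)h_{n,1}$ (an \emph{alternating} leftmost-Haar series handling the $C^{\ast}$-half, kept pointwise small by a Leibniz-type estimate) and $f_2=\sum_{n\ \mathrm{even}}\mu(2^{-n-2};x)\chi_{_{I_n}}$ (ring indicators handling the $C$-half), then verifies the two lower bounds on the union $E$ of even rings by computing $T\chi_{_{I_n}}$ and $Tf_1$ exactly. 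Your building blocks are essentially dual discretizations of the same objects: your staircase $\sum_j c\,(h_{n_j,1}-h_{n_j+1,1})$ is the constant-coefficient version of $f_1$, and your $\chi_{(0,\delta)}$ is $\sum_{k\ge m}\chi_{_{I_k}}$; you exploit the same two facts ($T$ fixes odd-level and kills even-level leftmost Haar functions; all relevant transforms are nonnegative on even rings) and the same final rearrangement step ($E\cap(0,2^{-l})$ has measure $\tfrac23\,2^{-l}$). What the layer-cake buys is that every verification reduces to two scale-invariant computations for indicators; what it costs is the (routine, but unaddressed) justification that $T$ commutes with $\int_0^\infty f_\lambda\,d\lambda$ --- fine here because on each fixed ring the partial sums of $Tf_\lambda$ stabilize uniformly in $\lambda$ --- together with measurability of $\lambda\mapsto f_\lambda$.

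The one substantive loose end is the one you flag yourself: the theorem asserts the specific constant $12$, and you never verify that your $1/c_0\le 12$. Carrying out your own tally does close this with room to spare: $T\chi_{(0,2^{-m})}\ge\tfrac13\min(1,2^{l-m})$ on every even ring $I_l$; taking $c=\tfrac34$ and $n_1$ an odd integer in $[\log_2(1/r)-2,\log_2(1/r)]$ gives $T(\mathrm{staircase})\ge\tfrac{3}{8\log 2}\ln^+(r2^l)>\tfrac12\ln^+(r2^l)$ there; integrating in $\lambda$ yields $Tf\ge\tfrac12 S\mu(x)(2^{-l})$ on $I_l\cap E$, and your final step then produces a constant of at most $4$. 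Two small corrections: your staircase transform is \emph{not} nonnegative on every ring (it is negative on the odd ring $I_{n_1}$), only on the even rings you actually use; and for the Hardy identity to come out with no loss you need $\delta(\lambda)\ge r(\lambda)$ (e.g.\ the largest dyadic number $\le 4r(\lambda)$), not merely $\delta(\lambda)\asymp r(\lambda)$.
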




 Recall that Paley's result for $L_p$-spaces was later extended  (see e.g. \cite[II.2.c]{LT} or \cite[Theorem~II.9.6]{KPS}) to the setting of separable symmetric function spaces having non-trivial Boyd indices (equivalently, separable interpolation spaces between $L_p$ and $L_q $, for some $1<p\leq q<\infty$ \cite{LT}). The same condition is equivalent to the boundedness of the Calder\'{o}n operator $S$ on a symmetric function space (see e.g. \cite[Chapter 3, Theorem 6.10 and Corollary 6.11]{BSh}). Therefore, an immediate consequence of Theorem \ref{main thm} is the fact that  the unconditionality of the Haar basis in a symmetric function space $E $ can be equivalently restated in the terms of the boundedness of the operator $T$ in $E$ (cf.  \cite{LSU}).
 Moreover, this result allows us to identify the optimal Banach symmetric range of martingale transforms on $E$, for any given symmetric function space $E$ on $[0,1]$.

%
%

In Section \ref{S}, we introduce the least receptacle
$\mathcal{S}_E$ of  the Calder\'{o}n operator $S$ on a quasi-Banach symmetric function space $E$ such that $E\subset L_1$ and, as an application of Theorem \ref{main thm},
we show that the optimal symmetric quasi-Banach  range of $T$ on such a space $E $ coincides with $\mathcal{S}_E$ (see Theorem \ref{optimal01} and Corollary  \ref{opt-gen}). Moreover, in Section \ref{A1}, we prove the following:

 \begin{corollary}\label{maincor}
Assume  that  $E \subset L_1 $  and $F$ are quasi-Banach symmetric function spaces on $(0,1)$.  The following statements are equivalent:
\begin{enumerate}
\item The martingale transform $T$ is bounded from $E$ into $F$.
\item The Hilbert transform\footnote{As usual, the Hilbert transform $H$ on $[0,1]$ is defined by the  principal-value integral:
 $$
 Hx(t):=\lim_{\delta\to 0}\int_{|t-s|\ge\delta}\frac{x(s)}{t-s}\,ds,\;\;t\in  [0,1]$$
(equivalently, in this context we can consider the conjugate-function operator $x\mapsto \tilde{x}$, see e.g. \cite[p.~160]{BSh}).} $H$ is bounded from $E$ into $F$.
\item The Calder\'{o}n operator $S$ is bounded from $E$ into $F $.
\end{enumerate}
If, in addition,  $E $ is separable, then each of the statements (1) --- (3) is equivalent to the following:
\begin{enumerate}
\item[(4)] The projections $P_A:L_2\to L_2,$ $A\subset\mathbb{N},$ defined by setting
$$P_Ah_i=
\begin{cases}
h_i,& i\in A\\
0,& i\notin A
\end{cases}.
$$
extend to bounded linear mappings from $E$ into $F.$ Moreover,
$$\sup_{A\subset\mathbb{N}}\left\|P_A\right\|_{E\to F}<\infty.$$
\end{enumerate}
 \end{corollary}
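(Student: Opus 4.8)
The plan is to deduce Corollary \ref{maincor} from Theorem \ref{main thm} together with the classical theory relating the Hilbert transform, the conjugate-function operator, and the Calder\'{o}n operator. The equivalence of (2) and (3) is not new: the two-sided pointwise estimate $\mu(Hx)\le C\,S\mu(x)$ and its converse (valid on $[0,1]$ for the conjugate-function operator, hence for $H$ on the relevant class of functions) are recorded in \cite[\S\,2]{Sh-80} and \cite[Theorem~3.6.10]{BSh}; so boundedness of $H$ from $E$ into $F$ and boundedness of $S$ from $E$ into $F$ are equivalent for any pair of quasi-Banach symmetric function spaces, since boundedness of a rearrangement-invariant operator between symmetric spaces is governed entirely by the action on $\mu(x)$. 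I would state this as the first step, citing the relevant two-sided estimate and the standard fact that $S$ commutes (up to constants) with the passage to decreasing rearrangements in the sense that $\mu(Sx)\asymp S\mu(x)$ is not needed — rather one uses that $S$ is monotone and that $S(\mu(x))$ already dominates $\mu(Sx)$ while being itself of the form $S$ applied to a decreasing function.

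The second step is the equivalence of (1) and (3). The direction (3)$\Rightarrow$(1) is immediate from estimate \eqref{above estimate}: $\mu(Tx)\le C_{\rm abs}S\mu(x)$ pointwise, and dilation operators $\sigma_s$ are bounded on any symmetric quasi-Banach space with norm at most $\max(1,s)$, so $\|Tx\|_F\le C_{\rm abs}\|S\mu(x)\|_F\le C_{\rm abs}\|S\|_{E\to F}\|x\|_E$ once one notes $\|S\mu(x)\|_F = \|S|x|\|_F$ has the same symmetric-space norm as $Sx$ (both being symmetric-space norms of functions with the same distribution, using $\mu(S\mu(x))\ge\mu(Sx)$). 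For the converse (1)$\Rightarrow$(3), I would invoke Theorem \ref{main thm}: given $x\in E\subset L_1$, produce $f\in L_1$ with $|f|\le 3\sigma_4\mu(x)$ and $\sigma_{1/8}S\mu(x)\le 12\mu(Tf)$. Then $\|f\|_E\le 3\|\sigma_4\mu(x)\|_E\le 12\|x\|_E$ (using $\|\sigma_4\|_{E\to E}\le 4$ and $\|\mu(x)\|_E=\|x\|_E$), so boundedness of $T$ from $E$ to $F$ gives $\|Tf\|_F\le 12\|T\|_{E\to F}\|x\|_E$; and then $\|S\mu(x)\|_F = \|\sigma_8\sigma_{1/8}S\mu(x)\|_F\le 8\|\sigma_{1/8}S\mu(x)\|_F\le 96\|\mu(Tf)\|_F = 96\|Tf\|_F$, whence $\|Sx\|_F\le C\|T\|_{E\to F}\|x\|_E$. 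Some care with the quasi-triangle/quasi-norm constant of $F$ is needed since $F$ need only be quasi-Banach, but every inequality used is pointwise or a single dilation, so no constants proliferate; I would remark on this explicitly.

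The third step handles the extra equivalence with (4) under the separability hypothesis on $E$. Here the key classical ingredient is that each Haar basis projection $P_A$ is a martingale transform (indeed a sum of differences $\mathbb{E}_n-\mathbb{E}_{n-1}$ restricted to chosen Haar functions) and, conversely, $T$ itself decomposes through the $P_A$'s with a uniform bound; more precisely $T$, as the martingale transform with multiplier sequence $\{1,0,1,0,\dots\}$ indexed by the levels of the filtration, is exactly of the form $P_A$ for the set $A$ consisting of all Haar indices belonging to even levels, so $T\in\{P_A : A\subset\mathbb{N}\}$ and (4)$\Rightarrow$(1) is trivial with constant one. For (1)$\Rightarrow$(4), note that on a separable $E$ the Haar system is a basis, each $P_A$ commutes with the conditional expectations, and by the three-function / decoupling lemmas underlying \eqref{above estimate} (the same references \cite[Theorem~3.3.7]{KS}, \cite[Theorem~5.1]{NS}) every $P_A$ satisfies the pointwise bound $\mu(P_A x)\le C_{\rm abs}S\mu(x)$ with a constant independent of $A$; combined with the already-established equivalence (1)$\Leftrightarrow$(3), boundedness of $T$ from $E$ to $F$ gives boundedness of $S$, hence the uniform bound $\sup_A\|P_A\|_{E\to F}\le C_{\rm abs}\|S\|_{E\to F}<\infty$. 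The separability is used only to guarantee the Haar system is a Schauder basis of $E$ so that $P_A$ is well defined as a densely-defined operator before taking the closure.

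The main obstacle I anticipate is the bookkeeping of the quasi-norm constants and the dilation-operator norms in the quasi-Banach (rather than Banach) range space $F$, and making sure the claimed uniform bound in (4) genuinely follows from a \emph{single} pointwise estimate valid for all $P_A$ simultaneously rather than from an $A$-dependent argument; the latter is exactly the content of the weak-type-$(1,1)$-with-constant-$2$ statement cited after \eqref{T_espilon}, applied to the general $T_\epsilon$ and then to the sub-filtration on which $P_A$ acts, so it should go through, but the write-up must be careful that the constant does not secretly depend on $A$ through the depth of the dyadic levels involved. Everything else is a direct assembly of Theorem \ref{main thm}, the pointwise estimate \eqref{above estimate}, and the classical Hilbert-transform/Calder\'{o}n-operator comparison.
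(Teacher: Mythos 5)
Your proposal is correct and follows essentially the same route as the paper: (1)$\Leftrightarrow$(3) via Theorem \ref{main thm} together with \eqref{above estimate}, (2)$\Leftrightarrow$(3) via the classical Hilbert-transform/Calder\'{o}n-operator comparison of Boyd and Bennett--Sharpley, (3)$\Rightarrow$(4) via the uniform weak-type pointwise bound $\mu(P_Ax)\le c_{\rm abs}S\mu(x)$ for all Haar projections, and (4)$\Rightarrow$(1) by observing that $T$ itself equals $P_A$ for the set $A$ of Haar indices on even levels. The only differences are cosmetic (explicit constant bookkeeping and the quasi-norm caveats), so no further comparison is needed.
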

  In the case when $E$  has non-trivial Boyd indices, the space $\mathcal{S}_E$ coincides with $E$ and from this angle, the result of Corollary \ref{maincor} extends and complements classical results of Paley and others cited above.

In the special case, when the space $E$ is a Lorentz space $E=\Lambda_\phi(0,1)$ with $\phi$ satisfying some natural  conditions., we provide a precise identification of the space $\mathcal{S}_E$ as another Lorentz space
 (see Section \ref{L} and Corollary \ref{main th2}).

In conclusion, we apply our results to the theory of narrow operators (see \cite{Popov-Randri, Popov-2011}). In Section~\ref{s4}, we show that the identity operator  on every separable quasi-Banach symmetric function space $E $ is a sum of two narrow operators (given by basis projections with respect to the Haar basis) bounded from $E$ into $\mathcal{S}_E$. This application extends the known result  (see \cite{Popov-Randri,Plichko-Popov,Popov-2011}) that the identity operator on a separable symmetric space $E$  with non-trivial Boyd indices is a sum of two narrow operators bounded in $
E$, which plays an important role in the theory of narrow operators.



\section{Preliminaries}

\subsection{Decreasing Rearrangement}\label{s number section}

Let $(I,m)$ denote the measure space $I = (0,1)$
equipped with the Lebesgue measure $m.$ Denote by $S(0,1)$ the space of all measurable real-valued functions on $(I,m)$ (more precisely, classes of  functions which coincide almost everywhere).

For $x\in S(0,1)$, we denote by
$\mu(x)=\mu(t;x)$ the decreasing right-continuous rearrangement of the function $|x|$ (see e.g. \cite[II, p.~117]{LT} or \cite[p.~29]{BSh}), that is,
$$\mu(t;x):=\inf\left\{
s\geq0:\ m(\{u\in [0,1]:\,|x(u)|>s\})\leq t
\right\},\quad t\in I.$$



\subsection{Symmetric (Quasi-)Banach Function Spaces}
For the general theory of symmetric Banach function spaces (resp. quasi-Banach  spaces), we refer the reader to \cite{BSh,KPS,LT} (resp. to \cite{KPR}).

\begin{definition}\label{Sym} We say that a (quasi-)normed space $(E ,\left\|\cdot\right\|_{E})  $ is a symmetric (quasi-)normed function
space
on $[0,1]$ if the following hold:
\begin{enumerate}[{\rm (a)}]
\item $E$ is a subset of $S(0,1);$
\item If $x\in  E$ and if $y\in S (0,1)$  are such that $|y|\leq|x|,$ then $y\in E$ and $\left\|y\right\|_E\leq \left\|x\right\|_E;$
\item If $x\in E$ and if $y\in S(0,1)$ are such that $\mu(y)=\mu(x),$ then $y\in E$ and $\left\|y\right\|_E=\left\|x\right\|_E.$
\end{enumerate}
If, in addition,  $(E,\left\|\cdot\right\|_E)$ is a  (quasi-)Banach space, then  $(E ,\left\|\cdot\right\|_{E})$ is called a symmetric (quasi-)Banach function space.
\end{definition}

For each $s>0$, the dilation operator $\sigma_s$ given by
$\sigma_s x (t) = x(t/s) \chi_{_{[0,1]}} (t/s)$, $t\in [0,1]$,
is well defined and bounded on every (quasi-)Banach symmetric function space $E$.

The Boyd indices \cite{LT,KPS} of a Banach symmetric function space $E$ are defined by
$$\alpha(E) =\lim_{s\to 0} \frac{\ln \norm{\sigma_s}_{E\to E}}{\ln s},~ \beta(E) =\lim_{s\to \infty } \frac{\ln \norm{\sigma_s}_{E\to E}}{\ln s}. $$
In general, $0\le \alpha(E)\le \beta(E)\le 1$.

\subsection{Calder\'{o}n operator}


The classical Hardy (or Cesaro) operator $C$ and its (formal) dual $C^*$\footnote{For any $x,y\in L_2$, we have $\int_0^1 (Cx)(s)y(s)\,ds =\int_0^1 x(s )(C^*y)(s)\,ds$.} are defined by setting
$$
(Cx)(s) := \frac1s \int_0^s x(u)\,du$$
and
$$ (C^*x)(s) :=\int_s^1\frac{x(u)}{u}du,$$
respectively \cite{HLP}. It is well known that $C:\,L_1\to L_{1,\infty}$ and $C^*:L_1\to L_1$, where the quasi-Banach symmetric space $L_{1,\infty}:=L_{1,\infty}(0,1)$ consists of all functions $x\in S(0,1)$ such that the quasi-norm
$$
\left\|
x\right\|_{L_{1,\infty}}:=\sup_{0<t\le 1}t\mu(t;x)$$
is finite.

One can easily see that the Calder\'{o}n operator $S$ (see \eqref{Calder}) satisfies the following equality
\begin{align*}
(Sx)(t) = (Cx)(t)+(C^*x)(t), ~x\in  L_1.
\end{align*}




\section{Proof of Theorem \ref{main thm}}
Let $\{\mathcal{F}_n\}_{n\ge 0}$ be the standard dyadic filtration on $[0,1]$.
Let $\bE_n$ be the conditional expectation from $L_1$ onto $\cF_n$, $n\ge 0$,
and assume for convenience that $\bE_{-1}=0.$

Also, we denote $I_n:=(2^{-n-1},2^{-n}),$ $J_n:=(0,2^{-n}),$ $n\geq0$,
and set
\begin{equation}\label{E definition}
E=\bigcup_{n\geq0}I_{2n}.
\end{equation}

Recall (see \eqref{T-0}) that the martingale transform $T$ is defined by the formula
$$Tf=\sum_{\substack{m\geq0\\ m\mbox{ \tiny is even}}}(\bE_{m} f -\bE_{m-1} f).$$

\subsection{Pointwise upper estimate: the case of the operator $C^{\ast}$}
In this subsection, we were inspired by the proof of Theorem 1 in \cite{LSU}; see also  \cite[Chapter 13.2]{Astashkin20}.

For a measurable function $x\in L_1,$ we define a function $f_1$ by setting
\begin{equation}\label{f1 definition}
f_1=\sum_{n=0}^\infty (-1)^{n+1}\mu\left(2^{-n-1};x\right)h _{n,1},
\end{equation}
where $\{h_{n,1}\}_{n\ge 0}  $ is a  subsequence of the Haar system $\{h_{n,k}\}$.  Note that
\begin{align}\label{Hndef}
h_{n,1}:=\chi_{_{J_{n+1}}}-\chi_{_{I_{n}}}  = \chi_{_{(0,2^{-n-1})}}-
\chi_{_{(2^{-n-1}, 2^{-n})}},\quad n\geq 0.
\end{align}

 The  following proposition delivers a pointwise upper estimate for an element $C^{\ast}(\mu(x))$ in terms of the operator $T$ and the function $f_1$ introduced above.

\begin{proposition}\label{cast main lemma} Let
$x\in L_1$.
If $E$ and $f_1$ are as in \eqref{E definition} and \eqref{f1 definition}, respectively, then
$$(Tf_1)\chi_{_E}\geq \frac1{2\log(2)}\chi_{_E}\cdot \sigma_{\frac12}C^{\ast}\mu(x).$$
\end{proposition}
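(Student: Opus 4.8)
The plan is to compute $Tf_1$ explicitly on the set $E$ by exploiting the simple structure of the functions $h_{n,1}$ relative to the dyadic filtration. Observe that $h_{n,1}$ is $\cF_{n+1}$-measurable with mean zero, and in fact $h_{n,1} = \bE_{n+1}h_{n,1} - \bE_n h_{n,1}$ since $\bE_n h_{n,1}=0$ (the function $h_{n,1}$ is orthogonal to all $\cF_n$-measurable functions, being a Haar function of "level" $n$). Consequently, applying the conditional expectation increments, $\bE_m h_{n,1} - \bE_{m-1}h_{n,1}$ equals $h_{n,1}$ when $m = n+1$ and vanishes for all other $m$. Therefore, since $T$ only picks out the increments with $m$ even,
\begin{align*}
Tf_1 = \sum_{n\geq 0} (-1)^{n+1}\mu(2^{-n-1};x)\sum_{\substack{m\geq 0\\ m\text{ even}}}(\bE_m - \bE_{m-1})h_{n,1} = \sum_{\substack{n\geq 0\\ n\text{ odd}}} (-1)^{n+1}\mu(2^{-n-1};x)\,h_{n,1}.
\end{align*}
Writing $n = 2k+1$, this is $\sum_{k\geq 0}\mu(2^{-2k-2};x)\,h_{2k+1,1}$.

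Next I would restrict to $E=\bigcup_{n\geq 0}I_{2n}$ and evaluate each $h_{2k+1,1}$ there. Recall $h_{2k+1,1} = \chi_{J_{2k+2}} - \chi_{I_{2k+1}}$, i.e. it is $+1$ on $(0,2^{-2k-2})$, $-1$ on $(2^{-2k-2},2^{-2k-1})$, and $0$ elsewhere. The interval $I_{2n} = (2^{-2n-1},2^{-2n})$: for which $k$ does $h_{2k+1,1}$ take a nonzero (and, crucially, positive) value on $I_{2n}$? The support of $h_{2k+1,1}$ is $(0,2^{-2k-1})$, and it is $+1$ precisely on $(0,2^{-2k-2})$. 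Since $I_{2n}\subset (0,2^{-2k-2})$ iff $2^{-2n}\leq 2^{-2k-2}$ iff $k\leq n-1$, we get that on $I_{2n}$, $h_{2k+1,1} = +1$ for all $0\leq k\leq n-1$ and $h_{2k+1,1}=0$ for $k\geq n$ (the case $k=n$ would need checking: then the support is $(0,2^{-2n-1})$ which is disjoint from $I_{2n}=(2^{-2n-1},2^{-2n})$, so indeed zero). Hence on $I_{2n}$,
\begin{align*}
(Tf_1)\chi_{I_{2n}} = \sum_{k=0}^{n-1}\mu(2^{-2k-2};x).
\end{align*}
All terms are nonnegative, so $Tf_1 \geq 0$ on $E$, and on $I_{2n}$ we have the lower bound $(Tf_1)\chi_{I_{2n}} \geq \sum_{k=0}^{n-1}\mu(2^{-2k-2};x)$ — in fact equality.

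Finally I would compare $\sum_{k=0}^{n-1}\mu(2^{-2k-2};x)$ with $\sigma_{1/2}C^{*}\mu(x)$ on $I_{2n}$. For $t\in I_{2n}=(2^{-2n-1},2^{-2n})$ we have $t/2 \in (2^{-2n-2},2^{-2n-1})\subset J_{2n+1}$, and
\begin{align*}
(\sigma_{1/2}C^{*}\mu(x))(t) = (C^{*}\mu(x))(t/2) = \int_{t/2}^1 \frac{\mu(s;x)}{s}\,ds.
\end{align*}
I would split $\int_{t/2}^1 = \sum_{j}\int_{2^{-j-1}}^{2^{-j}}$ over the relevant dyadic blocks (since $t/2\geq 2^{-2n-2}$, the integral is at most $\sum_{j=0}^{2n+1}\int_{2^{-j-1}}^{2^{-j}}\mu(s;x)s^{-1}\,ds$), bound each block integral using monotonicity of $\mu$ by $\log(2)\cdot\mu(2^{-j-1};x)$, and then re-index to match the even-indexed subsum $\sum_{k=0}^{n-1}\mu(2^{-2k-2};x)$; the odd-indexed terms and a shift by one get absorbed into the constant $2\log(2)$, again using that $\mu$ is decreasing so that $\mu(2^{-j-1};x)\leq \mu(2^{-2k-2};x)$ for a suitable nearby $k$. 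The main obstacle is precisely this bookkeeping: matching the discrete sum over even dyadic levels against a continuous integral over all levels while tracking where the factor $2\log(2)$ and the dilation $\sigma_{1/2}$ come from. Care is needed to ensure every dyadic block $\int_{2^{-j-1}}^{2^{-j}}\mu(s;x)s^{-1}\,ds$ appearing in $(\sigma_{1/2}C^*\mu(x))(t)$ on $I_{2n}$ is dominated by $\mu(2^{-2k-2};x)$ for some $k\leq n-1$ (up to the $\log 2$ factor and the factor $2$ accounting for odd $j$), which follows because the relevant $j$ range is $0\leq j\leq 2n+1$ and $\mu$ is nonincreasing. Assembling these estimates over all $n\geq 0$ yields $(Tf_1)\chi_E \geq \frac{1}{2\log(2)}\chi_E\cdot\sigma_{1/2}C^{*}\mu(x)$.
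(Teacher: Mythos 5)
Your computation of $Tf_1$ on $E$ is correct and essentially reproduces the paper's: $Th_{n,1}=h_{n,1}$ for odd $n$ and $Th_{n,1}=0$ for even $n$, and on $I_{2n}$ the surviving functions $h_{2k+1,1}$ equal $+1$ exactly for $0\le k\le n-1$ and vanish for $k\ge n$, giving $(Tf_1)|_{I_{2n}}=\sum_{k=0}^{n-1}\mu(2^{-2k-2};x)$. The gap is in the final comparison, where you use the wrong dilation convention. The paper defines $\sigma_s y(t)=y(t/s)\chi_{[0,1]}(t/s)$, so $(\sigma_{1/2}C^{*}\mu(x))(t)=(C^{*}\mu(x))(2t)\chi_{[0,1]}(2t)$, not $(C^{*}\mu(x))(t/2)$ as you write. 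This is not cosmetic: under your reading the proposition is false. Take $n=0$: on $I_0=(1/2,1)$ your formula gives $(Tf_1)|_{I_0}=0$ (an empty sum), while $(C^{*}\mu(x))(t/2)=\int_{t/2}^{1}\mu(s;x)s^{-1}\,ds>0$ for generic $x$. Accordingly, the bookkeeping you defer cannot be completed: for $t\in I_{2n}$ your integral $\int_{t/2}^{1}$ covers the $2n+2$ dyadic blocks $j=0,\dots,2n+1$, whereas $2\sum_{k=0}^{n-1}\mu(2^{-2k-2};x)$ can absorb only the $2n$ blocks with $j\le 2n-1$ (pairing $j=2k$ and $j=2k+1$ against $2\mu(2^{-2k-2};x)$); the two deepest blocks $j=2n$ and $j=2n+1$ each contribute at least $\log(2)\,\mu(2^{-2n-1};x)$, which dominates every term remaining on the right.

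With the correct convention the argument closes exactly as you intend and coincides with the paper's proof: for $t\in I_{2n}$ one has $2t\ge 2^{-2n}$, hence
$(C^{*}\mu(x))(2t)\le (C^{*}\mu(x))(2^{-2n})=\sum_{j=0}^{2n-1}\int_{2^{-j-1}}^{2^{-j}}\mu(s;x)s^{-1}\,ds\le \log(2)\sum_{j=0}^{2n-1}\mu(2^{-j-1};x)\le 2\log(2)\sum_{k=0}^{n-1}\mu(2^{-2k-2};x)$,
the last step using that $\mu$ is nonincreasing to dominate each even-$j$ term by the succeeding odd-$j$ term. So the only repair needed is to restore the correct direction of the dilation; the rest of your proposal is sound.
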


We split the proof of Proposition \ref{cast main lemma} into several steps.

The first lemma is just a simple observation. We provide a short proof for the reader's convenience.
\begin{lem}\label{easy compute lemma} Let $a_n\in \mathbb{R}$, $n \ge 0$.
We have
$$\sum_{n=0}^\infty a_n h _{n,1}=-a_0\chi_{_{I_0}}+\sum_{m= 1}^\infty \left(
\left(\sum_{n=0}^{m-1}a_n \right)-a_{m}
\right)\chi_{_{I_m}}.$$
\end{lem}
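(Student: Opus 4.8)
\textbf{Proof plan for Lemma \ref{easy compute lemma}.} The statement is purely a pointwise bookkeeping identity, so the plan is to evaluate the left-hand side at an arbitrary point $t\in I_m$ and check that only finitely many summands survive.

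First I would rewrite each Haar function using \eqref{Hndef}, namely $h_{n,1}=\chi_{_{J_{n+1}}}-\chi_{_{I_n}}$, and record the two elementary set-theoretic facts that drive everything: the intervals $\{I_m\}_{m\ge 0}$ form a partition of $(0,1)$ up to a null set, and $J_{n+1}=\bigsqcup_{m\ge n+1}I_m$. Consequently, for a fixed $t\in I_m$ one has $\chi_{_{J_{n+1}}}(t)=1$ precisely when $2^{-m}\le 2^{-n-1}$, i.e. when $n\le m-1$, while $\chi_{_{I_n}}(t)=1$ precisely when $n=m$ (disjointness of the $I_n$). Hence $h_{n,1}(t)=1$ for $n=0,\dots,m-1$, $h_{m,1}(t)=-1$, and $h_{n,1}(t)=0$ for $n>m$; in particular the series $\sum_{n\ge 0}a_nh_{n,1}$ is, at every point, a \emph{finite} sum, so its convergence is not an issue.

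Evaluating, for $t\in I_m$ with $m\ge 1$ we get $\sum_{n\ge 0}a_nh_{n,1}(t)=\sum_{n=0}^{m-1}a_n-a_m$, which is exactly the coefficient of $\chi_{_{I_m}}$ on the right-hand side; and for $t\in I_0$ the empty sum $\sum_{n=0}^{-1}a_n$ vanishes, leaving $-a_0$, which matches the term $-a_0\chi_{_{I_0}}$. Since $(0,1)=\bigsqcup_{m\ge 0}I_m$ up to measure zero, this verifies the identity almost everywhere and completes the proof. There is no real obstacle here — the only point requiring a moment's care is the bookkeeping on indices (the off-by-one in ``$n\le m-1$'' versus the interval $I_m$) and the degenerate empty-sum case $m=0$.
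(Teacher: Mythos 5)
Your proof is correct and is essentially the same argument as the paper's: both hinge on the decomposition $h_{n,1}=\chi_{_{J_{n+1}}}-\chi_{_{I_n}}$ together with $J_{n+1}=\bigsqcup_{m\ge n+1}I_m$ and the disjointness of the $I_m$. You merely organize the interchange of summation pointwise (evaluating at $t\in I_m$) instead of as a formal swap of the double sum, which is a presentational difference only.
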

\begin{proof}
By the definitions of $h _{n,1}$,  $I_n$ and $J_n$, we
 have
\begin{align*}
\sum_{n=0}^\infty a_n h _{n,1}&\stackrel{\eqref{Hndef}}{=}\sum_{n=0}^\infty a_n \left(
\chi_{_{J_{n+1}}}- \chi_{_{I_{n}}} \right) \\
&~=~\sum_{n=0}^\infty a_n\chi_{_{J_{n+1}} }- \sum_{n=0}^\infty a_n\chi_{_{I_{n}}}\\
&~=~\sum_{n=0}^\infty a_n\sum_{m= n+1 }^\infty \chi_{_{I_m}}-\sum_{m=0}^\infty  a_{m}\chi_{_{I_m}}\\
&~=~\sum_{m= 1}^\infty \chi_{_{I_m}}\sum_{n =0 }^{  m-1 }a_n-\sum_{m=0}^\infty a_{m}\chi_{_{I_m}}\\
& ~=~-a_0\chi_{_{I_0}}+\sum_{m= 1}^\infty \left(
\left(\sum_{n=0}^{  m-1}a_n\right) -a_{m}
\right)\chi_{_{I_m}} .
\end{align*}
This completes the proof.
\end{proof}

 For the sake of convenience, we observe   the following standard result.

\begin{lem}\label{leibniz} Let $\{b_n\}_{n\geq1}\subset \mathbb{R}$ be a sequence with alternating signs and with increasing absolute values. We have
$$\left|\left(\sum_{n=1}^{m-1}b_n\right)-b_m \right|\leq 2|b_m|.$$
\end{lem}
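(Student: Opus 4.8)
The plan is to reduce the statement to the classical alternating‑series estimate, but applied \emph{from the large end}: for a nonnegative nondecreasing sequence $0\le a_1\le a_2\le\cdots$, the partial sums of $\sum(-1)^na_n$ are controlled by the \emph{last} summand rather than the first. Concretely, I would first record the auxiliary bound
$$\Bigl|\sum_{n=1}^{k}(-1)^na_n\Bigr|\le a_k,\qquad k\ge1,$$
and then combine it with the triangle inequality.

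To prove the auxiliary bound, set $a_n:=|b_n|$; by hypothesis the $a_n$ are nondecreasing, and since the signs of the $b_n$ alternate we have $b_n=s(-1)^{n-1}a_n$ for the fixed sign $s$ of $b_1$. Hence $\bigl|\sum_{n=1}^{m-1}b_n\bigr|=\bigl|\sum_{n=1}^{m-1}(-1)^na_n\bigr|$, so it suffices to bound the latter. Grouping consecutive terms in pairs handles both parities of the upper index: if the upper index is even, say $2j$, then $\sum_{n=1}^{2j}(-1)^na_n=(a_2-a_1)+(a_4-a_3)+\cdots+(a_{2j}-a_{2j-1})\ge0$, while regrouping as $-a_1+(a_2-a_3)+\cdots+(a_{2j-2}-a_{2j-1})+a_{2j}$ shows it is $\le a_{2j}$; the odd case $2j+1$ is symmetric and yields $-a_{2j+1}\le\sum_{n=1}^{2j+1}(-1)^na_n\le0$. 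In every case the absolute value of the sum is at most the absolute value of the last term, and (useful for sharpness) the sum carries the sign $(-1)^{\text{upper index}}$.

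Applying this with upper index $m-1$ (the case $m=1$ being the trivial empty sum) gives $\bigl|\sum_{n=1}^{m-1}b_n\bigr|\le a_{m-1}=|b_{m-1}|\le|b_m|$, the last step being the monotonicity hypothesis. Therefore, by the triangle inequality,
$$\Bigl|\Bigl(\sum_{n=1}^{m-1}b_n\Bigr)-b_m\Bigr|\le\Bigl|\sum_{n=1}^{m-1}b_n\Bigr|+|b_m|\le|b_m|+|b_m|=2|b_m|,$$
which is the asserted inequality. Moreover, tracing signs one checks that $\sum_{n=1}^{m-1}b_n$ and $b_m$ have opposite signs, so the two summands on the right genuinely add, and the constant $2$ is the natural one rather than an artifact of the triangle inequality. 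There is no real obstacle here: the only points demanding care are bookkeeping the fixed sign $s$ and splitting the pairing argument according to the parity of $m-1$; the rest is a one‑line estimate.
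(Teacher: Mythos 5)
Your proof is correct. The paper states this lemma as a ``standard result'' and omits the proof entirely, so there is nothing to compare against; your argument --- reducing to the alternating-series (Leibniz) bound $\bigl|\sum_{n=1}^{k}(-1)^n a_n\bigr|\le a_k$ for nondecreasing $a_n$ via pairing, then applying the triangle inequality together with $|b_{m-1}|\le|b_m|$ --- is exactly the standard one the authors had in mind, and your handling of both parities and of the trivial case $m=1$ is complete.
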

%
%
%

\begin{lem}\label{f1 above lemma} Let $x\in L_1$.
If $f_1$ is as in \eqref{f1 definition}, then we have
$$|f_1|\leq 2\sigma_2\mu(x).$$
\end{lem}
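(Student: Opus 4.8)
The plan is to compute $|f_1|$ explicitly on each interval $I_m$ and then compare the resulting values with $\sigma_2\mu(x)$. Since the functions $h_{n,1}$ are supported on $J_n$ and are constant on each $I_m$, the series \eqref{f1 definition} reduces on $I_m$ to a finite expression that Lemma \ref{easy compute lemma} handles directly: setting $a_n=(-1)^{n+1}\mu(2^{-n-1};x)$, we get that on $I_0$ the value of $f_1$ is $-a_0 = \mu(2^{-1};x)$ in absolute value, and on $I_m$ (for $m\geq 1$) the value is $\bigl(\sum_{n=0}^{m-1}a_n\bigr)-a_m$.

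The first step is to observe that the sequence $\{a_n\}_{n\geq 0}$ has alternating signs (because of the $(-1)^{n+1}$ factor) and that $\{|a_n|\}_{n\geq 0} = \{\mu(2^{-n-1};x)\}_{n\geq 0}$ is nondecreasing in $n$, since $\mu$ is a decreasing function and $2^{-n-1}$ decreases as $n$ grows. So I would apply Lemma \ref{leibniz} (with the index shift accounted for — note $\sum_{n=0}^{m-1}a_n = a_0 + \sum_{n=1}^{m-1}a_n$, and one checks the two-term Leibniz-type bound still gives a factor $2$; in fact $\bigl|\bigl(\sum_{n=0}^{m-1}a_n\bigr)-a_m\bigr|\le 2|a_m| = 2\mu(2^{-m-1};x)$ by the same alternating-partial-sums argument, possibly after absorbing the $m=0$ boundary term) to conclude that on $I_m$,
$$|f_1| \leq 2\mu(2^{-m-1};x), \quad m\geq 0.$$

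The second step is to identify $2\mu(2^{-m-1};x)$ with (a lower bound of) $2\sigma_2\mu(x)$ on $I_m$. By definition $\sigma_2\mu(x)(t) = \mu(x)(t/2)\chi_{[0,1]}(t/2) = \mu(t/2;x)$ for $t\in(0,1)$, so on $I_m=(2^{-m-1},2^{-m})$ we have $t/2\in(2^{-m-2},2^{-m-1})$, hence by monotonicity of $\mu(\cdot;x)$,
$$\sigma_2\mu(x)(t) = \mu(t/2;x)\geq \mu(2^{-m-1};x), \quad t\in I_m.$$
Combining the two displays gives $|f_1|\chi_{I_m}\leq 2\sigma_2\mu(x)\chi_{I_m}$ for every $m\geq 0$, and since $\bigcup_{m\geq 0}I_m = (0,1)$ up to a null set, this yields $|f_1|\leq 2\sigma_2\mu(x)$ everywhere, as claimed.

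The only mildly delicate point — and the one I would be most careful about — is the bookkeeping at the endpoint $m=0$ and the precise form in which Lemma \ref{leibniz} is invoked: Lemma \ref{leibniz} is stated for a sequence $\{b_n\}_{n\geq 1}$, whereas our partial sum runs from $n=0$. The cleanest fix is either to reindex by absorbing $a_0$ into the bound (the alternating-sign, increasing-absolute-value structure of the \emph{full} sequence $\{a_n\}_{n\geq 0}$ still forces the partial sums to stay between consecutive terms, so $\bigl|\sum_{n=0}^{m-1}a_n\bigr|\le |a_{m-1}|\le |a_m|$ and then the triangle inequality gives the factor $2$), or to note that the $m=0$ term contributes exactly $|a_0|=\mu(2^{-1};x)\le 2\mu(2^{-1};x)$ trivially. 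Either way the constant $2$ is not lost. Everything else is routine.
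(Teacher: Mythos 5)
Your proof is correct and follows essentially the same route as the paper's: evaluate $f_1$ on each dyadic interval $I_m$ via Lemma \ref{easy compute lemma}, bound the resulting alternating partial sum by $2|a_m|=2\mu(2^{-m-1};x)$ using the Leibniz-type estimate of Lemma \ref{leibniz}, and compare with $\sigma_2\mu(x)$ by monotonicity of the rearrangement. Your handling of the $n=0$ term and the index shift is in fact slightly more careful than the paper's own write-up, which silently starts the sum at $n=1$ and treats the interval $(\tfrac12,1)$ separately.
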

\begin{proof} Let
$$b_n:=(-1)^{n+1}\mu\left(2^{-n-1};x\right),\quad  n\ge 1.$$
This is a sequence with alternating signs and with increasing absolute values. By the definition of $f_1,$ we have
$$f_1=\sum_{n=1}^{\infty}b_nh_{n,1}.$$
By Lemma \ref{easy compute lemma}, we have
$$f_1|_{I_m}=\big(\sum_{n=1}^{m-1}b_n\big)-b_m,\quad m\geq 1.$$
By Lemma \ref{leibniz}, we have
$$|f_1|\Big|_{I_m}\leq 2|b_m|=2\mu(2^{-m-1},x)\leq 2\sigma_2\mu(x)\Big|_{I_m},\quad m\geq 1.$$
A combination of these inequalities yields $|f_1|\leq 2\sigma_2\mu(x)$ on every $I_m,$ $m\geq1,$ and, therefore, on $(0,\frac12).$ On the interval $(\frac12,1)$,
we have
$$\left|
f_1\chi_{_{(\frac12,1)}}\right |\stackrel{\eqref{f1 definition}}{=}   \mu\left(
0 ; x
\right) \le  \mu(x)\chi_{_{(\frac12,1)}}.$$
This completes the proof.
\end{proof}

\begin{proof}[Proof of Proposition \ref{cast main lemma}] By definitions \eqref{T-0} and \eqref{Hndef},   we have $Th_{n,1}=h _{n,1}$ for every  odd natural number $n$ and $Th_{n,1}=0$ for every even natural number $n.$ Therefore, we have
$$Tf_1=
\sum_{n=0}^\infty (-1)^{n+1}\mu\left(2^{-n-1};x\right)Th_{n,1}
 =
\sum_{\substack{n\geq1\\ n\mbox{ \tiny is odd}}}\mu\left(
2^{-n-1};x\right)h _{n,1} =\sum_{n\geq1}c_nh_{n,1},$$
where
$$c_n=
\begin{cases}
\mu\left(2^{-n-1 };x\right ),&n\mbox{   is odd};\\
0,&n\mbox{ is even}.
\end{cases}
$$
By Lemma \ref{easy compute lemma}, for even $m,$
\begin{align}\label{tfim}
Tf_1\big|_{I_m}=\left(\sum_{n=0}^{ m-1}c_n\right) -c_m=\sum_{\substack{0\leq n\leq m-1 \\ n\mbox{  \tiny  is odd}}}\mu(2^{-n-1 };x).
\end{align}
For an even natural number  $m\geq 1,$ it follows that
\begin{align}\label{1nmeven}
\sum_{\substack{0\leq n\leq m-1 \\ n\mbox{  \tiny  is odd}}}\mu(2^{-n-1 };x)&\geq \frac12\sum_{n=0}^{  m-1 }\mu(2^{-n-1 };x)\nonumber \\
&=\frac1{2\log(2)}\sum_{n=0}^{  m-1 }\int_{2^{-n-1 }}^{2^{-n}}\mu(2^{-n-1 };x)\frac{ds}{s} \nonumber\\
&\geq\frac1{2\log(2)}\sum_{n=0}^{  m -1 }\int_{2^{-n-1 }}^{2^{-n}}\mu(s; x)\frac{ds}{s}\\
&=\frac1{2\log(2)}\int_{2^{-m }}^1\mu(s;x)\frac{ds}{s}\nonumber\\
&=\frac1{2\log(2)} (C^{\ast}\mu(x))(2^{-m})\nonumber.
\end{align}
By \eqref{tfim} and \eqref{1nmeven}, we have
$$Tf_1\big|_{I_m}\geq \frac1{2\log(2)}\left( \sigma_{\frac12}C^{\ast}\mu(x) \right)\Big |_{I_m},\quad m\geq1\mbox{    is even}.$$
This completes the proof.
\end{proof}

\subsection{Pointwise upper estimate: the case of the operator $C$}
 As above, we denote $I_n=(2^{-n-1},2^{-n}),$ $J_n=(0,2^{-n}),$ $n\geq0.$ For any integer $n\geq0$, we define the function $g_n$ by setting
\begin{equation}\label{gn definition}
g_n: =\sum_{k=0}^{\infty}2^{-(n-k)_+}\cdot  \chi_{_{I_k}},
\end{equation}
where $u_+ = \begin{cases}u,~&\mbox{if $u\ge 0$};\\
0,~&\mbox{if $u<  0$}.
\end{cases} $

For any $x\in L_1,$ we define the function $f_2$ by
\begin{equation}\label{f2 definition}
f_2: =\sum_{\substack{n\geq0\\ n\mbox{  \tiny  is even}}}\mu\left(
2^{-n-2};x
\right)
\chi_{_{I_n}}.
\end{equation}

Now, we state the main result of this subsection.

\begin{proposition}\label{c main lemma}
 Let
$x\in L_1$. If $E$ and $f_2$ are as in  \eqref{E definition} and \eqref{f2 definition},  respectively, then
$$(Tf_2)\cdot\chi_{_E}\geq\frac16\chi_{_E} \cdot C\mu(x).$$
\end{proposition}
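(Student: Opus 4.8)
The plan is to mimic the structure of the $C^{\ast}$ case, but now exploiting the ``averaging'' behaviour of $T$ on indicator functions $\chi_{_{I_n}}$ rather than on Haar functions $h_{n,1}$. First I would compute $T g_n$ explicitly for each fixed $n$. Since $g_n$ is a step function constant on each $I_k$ (with value $2^{-(n-k)_+}$), and since $\bE_m$ acts on such step functions by dyadic averaging, one can check by a direct (telescoping) computation — parallel to Lemma~\ref{easy compute lemma} — that $T g_n$ is again a step function constant on the $I_m$, and moreover that its value on $I_m$ for even $m$ is comparable (up to an absolute constant) to $2^{-(n-m)_+}$ when $m\le n$ and in particular is bounded below by a fixed positive constant when $m=n$. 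The key point is that $g_n$ was engineered precisely so that the martingale-transform averaging, which kills the ``fine'' oscillations on $J_{n+1}$, leaves a definite amount of mass on $I_n$. I expect $T g_n|_{I_n}\ge c$ for some absolute $c>0$ (likely $c=\tfrac13$ or similar after bookkeeping), and $T g_n \ge 0$ on all of $E$.

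Next I would assemble $f_2 = \sum_{n\text{ even}} \mu(2^{-n-2};x)\,\chi_{_{I_n}}$ and relate $T f_2$ to $\sum_{n\text{ even}}\mu(2^{-n-2};x)\,T g_n$ — actually the cleaner route is to observe directly from the definition \eqref{f2 definition} that $f_2$ itself is a dyadic step function, compute $\bE_{2m}f_2-\bE_{2m-1}f_2$ on each $I_m$ (for even $m$), and sum. By positivity of the coefficients $\mu(2^{-n-2};x)\ge 0$ and the lower bounds from the previous step, we get, for even $m$,
\begin{equation*}
(T f_2)\big|_{I_m}\ \ge\ c\cdot \mu(2^{-m-2};x)\ +\ (\text{nonnegative contributions from }n<m).
\end{equation*}
Then the Calder\'on half $C\mu(x)$ is handled exactly as in \eqref{1nmeven}: since $\mu$ is decreasing, $\mu(2^{-m-2};x) \ge \mu(s;x)$ for $s\in(2^{-m-2},2^{-m-1})$, so
\begin{equation*}
c\,\mu(2^{-m-2};x)\ \ge\ \frac{c}{\log 2}\int_{2^{-m-2}}^{2^{-m-1}}\mu(s;x)\,\frac{ds}{s}
\ \ge\ c'\cdot\frac{1}{2^{-m}}\int_0^{2^{-m}}\mu(s;x)\,ds\ =\ c'\,(C\mu(x))(2^{-m}),
\end{equation*}
after absorbing the truncation of the integral to $(0,2^{-m-2})$ versus $(0,2^{-m})$ into the constant (here I would use that $\frac{1}{2^{-m}}\int_0^{2^{-m}}\mu \le \frac{1}{2^{-m}}\cdot\tfrac{1}{2}\int_0^{2^{-m-1}}\mu\cdot 4$ type estimates, or more simply bound $C\mu(x)(2^{-m})$ below by one dyadic block). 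Tracking the constants should yield the stated $\tfrac16$.

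The main obstacle I anticipate is the explicit computation of $T g_n$ — specifically making sure the cross terms (the contributions to $(Tf_2)|_{I_m}$ coming from indices $n\ne m$) genuinely have the right sign so they can be discarded, and pinning down the absolute constant in $T g_n|_{I_n}\ge c$. Because $T = \sum_{m\text{ even}}(\bE_m-\bE_{m-1})$ only picks up the ``even-level'' martingale differences, one has to be careful about how the value of $g_n$ on $I_k$ for $k$ both even and odd enters the conditional expectations $\bE_{2m}$ and $\bE_{2m-1}$ on $I_m$; the dyadic structure $I_k\subset J_{2m}$ or $I_k\subset I_{2m}$ must be used carefully. A safe way to organise this is to first prove a clean analogue of Lemma~\ref{easy compute lemma} expressing $T(\sum_k a_k\chi_{_{I_k}})\big|_{I_m}$ (for even $m$) as an explicit linear combination $\sum_{k} \alpha_{m,k} a_k$ with identifiable signs, then apply it with $a_k = 2^{-(n-k)_+}$ and finally with the actual coefficients of $f_2$. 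Once that lemma is in place the rest is the same decreasing-rearrangement/integral-comparison argument as in Proposition~\ref{cast main lemma}.
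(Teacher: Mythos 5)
Your plan for the first half is sound in spirit and close to what the paper does: the paper's key computational lemma (Lemma \ref{c first compute lemma}) evaluates $T\chi_{_{I_n}}$ explicitly (via the alternating transform $T_\epsilon$ with $\epsilon=\{(-1)^n\}$ and the identity $2T=T_{\epsilon}+{\rm id}$) and obtains $(T\chi_{_{I_n}})\chi_{_E}\ge\frac13 g_n\chi_{_E}$ with identifiable signs, which is exactly the ``clean analogue of Lemma \ref{easy compute lemma}'' you ask for. (One small confusion: in the paper $g_n$ is the \emph{lower bound} for $T\chi_{_{I_n}}$, not an input to $T$; you propose to compute $Tg_n$, which plays no role.)

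The genuine gap is in your final comparison with $C\mu(x)$. You keep only the diagonal term and claim
$$c\,\mu(2^{-m-2};x)\ \ge\ \frac{c}{\log 2}\int_{2^{-m-2}}^{2^{-m-1}}\mu(s;x)\,\frac{ds}{s}\ \ge\ c'\cdot 2^{m}\int_0^{2^{-m}}\mu(s;x)\,ds .$$
The second inequality is false: the left-hand side is one dyadic block of the integral, while the Ces\`aro average $2^m\int_0^{2^{-m}}\mu$ contains the contribution of $(0,2^{-m-2})$, where $\mu$ may be arbitrarily large. Concretely, for $\mu=N\chi_{_{(0,1/N)}}$ one has $\mu(2^{-m-2};x)=0$ but $(C\mu(x))(2^{-m})=2^m$ once $2^{-m-2}\ge 1/N$; similarly $\mu(s)=s^{-1}\log^{-2}(e/s)$ gives $(C\mu)(t)/\mu(t)\to\infty$. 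No single value $\mu(t)$ controls $\frac1t\int_0^t\mu$, so the argument modeled on \eqref{1nmeven} (which works for $C^{\ast}$, where each dyadic block of $\int_t^1\mu(s)\,ds/s$ is matched by one coefficient) cannot work for $C$. The terms you discard as mere ``nonnegative contributions'' are precisely the essential ones: for $n>m$ the block $\chi_{_{I_n}}$ (an interval \emph{closer to the origin} than $I_m$) contributes $\ge\frac13 2^{m-n}\mu(2^{-n-2};x)$ on $I_m$, and it is the geometrically weighted sum $\sum_{n>m}2^{m-n}\mu(2^{-n-2};x)$ that reconstructs $2^m\int_0^{2^{-m}}\mu$. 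This is how the paper proceeds: $g_n\ge\frac12 C\chi_{_{J_n}}$ (Lemma \ref{c second compute lemma}), then summing gives $C\bigl(\sum_{n\,{\rm even}}\mu(2^{-n-2};x)\chi_{_{J_n}}\bigr)\ge C\mu(x)$ by positivity of $C$. Your proof needs this full off-diagonal bookkeeping; as written, the concluding step fails.
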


We split the proof of Proposition~\ref{c main lemma} into several steps.

\begin{lem}\label{f2 above lemma}Let
$x\in L_1$.  We have
$$|f_2|\leq \sigma_4\mu(x).$$
\end{lem}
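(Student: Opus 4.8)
The plan is to prove the bound ringwise on the dyadic decomposition $(0,1)=\bigcup_{n\ge 0}I_n$ (up to a null set), using only that $f_2$ is piecewise constant on these rings and that the decreasing rearrangement is monotone.

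First I would record the elementary identity: since $\mu(x)$ lives on $(0,1)$, for every $t\in(0,1)$ one has $(\sigma_4\mu(x))(t)=\mu(t/4;x)$, because then $t/4\in(0,\tfrac14)\subset[0,1]$. Next, by the very definition \eqref{f2 definition}, $f_2$ vanishes (a.e.) outside $E=\bigcup_{n\text{ even}}I_n$, so the claimed inequality is trivial on the complement of $E$.

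The only case to examine is $t\in I_n=(2^{-n-1},2^{-n})$ with $n$ even, where $f_2(t)=\mu(2^{-n-2};x)$ is a constant. For such $t$ we have $t/4<2^{-n}/4=2^{-n-2}$, hence, since the decreasing rearrangement is non-increasing, $\mu(t/4;x)\ge\mu(2^{-n-2};x)$. Combining the two displays gives $f_2(t)=\mu(2^{-n-2};x)\le\mu(t/4;x)=(\sigma_4\mu(x))(t)$ on each even ring $I_n$, and therefore $|f_2|\le\sigma_4\mu(x)$ a.e. on $(0,1)$.

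There is essentially no obstacle here: the lemma is a direct monotonicity argument. The only design point worth flagging is the dilation factor. Because the sampling height $\mu(2^{-n-2};x)$ used in the definition of $f_2$ is taken two dyadic scales to the left of the ring $I_n$ on which it is placed, one needs $t/s\le 2^{-n-2}$ throughout $I_n$, which forces $s\ge 4$; thus the constant $4$ in the statement is the smallest dilation that makes the argument go through.
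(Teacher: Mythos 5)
Your proof is correct and is essentially the paper's own argument: the paper writes the same monotonicity estimate as the chain $f_2=\sum_{n\ \mathrm{even}}\mu(2^{-n-2};x)\chi_{_{I_n}}\le\sum_{n\ \mathrm{even}}\mu(2^{-n-2};x)(\chi_{_{I_n}}+\chi_{_{I_{n+1}}})\le\sigma_4\mu(x)$, which reduces to exactly your pointwise observation that $t/4<2^{-n-2}$ on the relevant dyadic rings. Your remark about why the dilation factor $4$ is forced is a correct and reasonable aside.
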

\begin{proof} Observe that
$$f_2\stackrel{\eqref{f2 definition}}{=}\sum_{\substack{n\geq0\\ n\mbox{   \tiny  is even}}}\mu(2^{-n-2};x)\chi_{_{I_n}} \leq \sum_{\substack{n\geq0\\ n\mbox{  \tiny  is even}}}\mu(2^{-n-2};x)(\chi_{_{I_n}}+\chi_{_{I_{n+1}}})\leq\sigma_4\mu(x).$$
\end{proof}

\begin{lem}\label{c first compute lemma}Let $n\ge 0$ be an even number.  If $E$ and $g_n$ are as  in \eqref{E definition} and \eqref{gn definition}, respectively, then
$$(T\chi_{_{I_n}})\chi_{_E}\geq \frac13g_n\chi_{_E} ,$$
where $I_n=(2^{-n-1},2^{-n})$.
\end{lem}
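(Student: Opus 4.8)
The plan is to compute $T\chi_{I_n}$ explicitly using the structure of $T$ and the formula $T\chi_{I_n}=\sum_{m\ge 0,\ m\ \mathrm{even}}(\bE_m\chi_{I_n}-\bE_{m-1}\chi_{I_n})$, and then show that on the set $E=\bigcup_{j\ge 0}I_{2j}$ the result dominates $\tfrac13 g_n$. First I would observe that, since $I_n=(2^{-n-1},2^{-n})=\Delta_{n+1}^{1}$ is a dyadic interval at level $n+1$, its conditional expectations are simple to write down: for $k\le n$, $\bE_k\chi_{I_n}=2^{-(n+1-k)}\chi_{J_k}$ (the constant $2^{k}\cdot 2^{-n-1}$ on the dyadic interval $J_k=(0,2^{-k})$ containing $I_n$, and $0$ elsewhere), while for $k\ge n+1$, $\bE_k\chi_{I_n}=\chi_{I_n}$. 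Thus the martingale differences $d_k:=\bE_k\chi_{I_n}-\bE_{k-1}\chi_{I_n}$ vanish for $k\ge n+2$, so the (infinite) sum defining $T\chi_{I_n}$ is actually a finite telescoping-type sum over $k\le n+1$.

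Next I would carry out the telescoping. Writing $T\chi_{I_n}=\sum_{m\ \mathrm{even},\ 0\le m\le n+1}d_m$, and using $d_m=\bE_m\chi_{I_n}-\bE_{m-1}\chi_{I_n}$, each $d_m$ is a difference of two explicit step functions of the above form; summing the ``even'' differences only (not all of them) is the key point. On a fixed atom $I_\ell$ with $\ell\ge n+1$, every $\bE_k\chi_{I_n}$ with $k\le n$ equals $0$ there (since $I_\ell\cap J_{n+1}=\emptyset$ when $\ell\ge n+1$, wait—rather $I_\ell\subset J_{n+1}$ iff $\ell\ge n+1$, so actually $\bE_k\chi_{I_n}=2^{-(n+1-k)}$ on $I_\ell$ for $k\le n$), so I must be careful: on $I_\ell$ we get $(T\chi_{I_n})|_{I_\ell}=\sum_{m\ \mathrm{even},\ 0\le m\le \min(\ell,n+1)}(2^{-(n+1-m)}-2^{-(n+2-m)}\,[\text{or }0])$; evaluating this geometric-type sum gives a value comparable to $2^{-(n-\ell)_+}$ up to the constant $\tfrac13$ — indeed for $\ell\le n$ the partial sums of $\sum 2^{-(n+1-m)}$ over even $m\le \ell$ behave like $2^{-(n-\ell)}$ times a bounded constant in $[\tfrac13,1]$, and for $\ell\ge n+1$ one picks up the full contribution which is a bounded constant, matching $g_n|_{I_\ell}=2^{-(n-\ell)_+}=1$ up to the factor $\tfrac13$. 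The restriction to $E$ (even indices $\ell$) is what keeps the relevant partial sums from being too small: on $I_\ell$ with $\ell$ even one always includes at least the term $m=\ell$ (when $\ell\le n+1$) or enough terms, whereas on odd $I_\ell$ the sum could start one step later and the estimate would fail — so the hypothesis $n$ even is used precisely to align parities so that the last included even $m$ is close to $\ell$.

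The main obstacle I anticipate is the bookkeeping of which even indices $m$ actually contribute on each atom $I_\ell$, and verifying that the resulting finite geometric sum is bounded below by $\tfrac13\cdot 2^{-(n-\ell)_+}$ uniformly in $\ell$ (and that the constant cannot be improved beyond what is needed). Concretely, for $\ell$ even with $\ell\le n$ one computes $\sum_{m\ \mathrm{even},\ 0\le m\le\ell}(2^{-(n+1-m)}-2^{-(n+2-m)})=2^{-(n+2)}\sum_{j=0,\ldots,\ell/2}4^{j}=2^{-(n+2)}\cdot\frac{4^{\ell/2+1}-1}{3}\ge \frac13 2^{-(n-\ell)}$, which is exactly the claimed bound; the cases $\ell=n+1$ and $\ell>n+1$ (where $g_n=1$) are handled by noting the sum is then at least $2^{-(n+2)}\cdot\frac{4^{(n+1)/2\text{-ish}}}{3}$, a constant $\ge\tfrac13$. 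I would write this out atom-by-atom, splitting into $\ell\le n$ and $\ell\ge n+1$, and in each case reduce to the single geometric-series inequality above. The only subtlety requiring care is the endpoint atom $I_\ell$ with $\ell=n$ or $n+1$ and the parity of $n+1$, but since $n$ is assumed even, $n+1$ is odd and does not contribute an extra even term, which keeps the two cases clean.
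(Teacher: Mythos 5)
Your overall strategy is sound and it reaches the correct conclusion, but it follows a genuinely different route from the paper. You compute $T\chi_{_{I_n}}$ directly by summing only the even-indexed martingale differences atom-by-atom on each $I_\ell$, whereas the paper first computes the alternating-sign transform $T_{\epsilon}\chi_{_{I_n}}$ with $\epsilon=\{(-1)^m\}$, obtains a closed form for it, and then passes to $T$ via the identity $2T=T_{\epsilon}+{\rm id}$. The two computations reduce to the same finite geometric sums; your version is more elementary and makes visible exactly where the parity hypothesis (that $n$ and $\ell$ are both even) enters, while the paper's version packages the bookkeeping into one closed-form expression $\frac{(-2)^{\min\{k,n\}+1}-1}{-3}$ and avoids case-splitting over $\ell\le n$ versus $\ell>n$.

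There is, however, one concrete error in your key estimate. You treat the $m=0$ term as $2^{-(n+1)}-2^{-(n+2)}$, i.e.\ as if $\bE_{-1}\chi_{_{I_n}}$ were the constant $2^{-(n+2)}$ on $J_{-1}$; but the convention is $\bE_{-1}=0$, so $d_0=\bE_0\chi_{_{I_n}}=2^{-(n+1)}$ on $I_\ell$ (for $\ell\le n$), which is $2^{-(n+2)}$ larger than what you use. This matters because your displayed inequality
$$2^{-(n+2)}\cdot\frac{4^{\ell/2+1}-1}{3}\ \ge\ \frac13\,2^{-(n-\ell)}$$
is actually false: the left side equals $\frac13 2^{-(n-\ell)}-\frac13 2^{-(n+2)}$, strictly below the target. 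Restoring the missing $2^{-(n+2)}$ from the correct $d_0$ gives
$$2^{-(n+1)}+\sum_{\substack{2\le m\le \ell\\ m\ \mathrm{even}}}2^{-(n+2-m)}\ =\ \frac13\,2^{-(n-\ell)}+\frac23\,2^{-(n+2)}\ \ge\ \frac13\,2^{-(n-\ell)},$$
and similarly the value on $I_\ell$ for even $\ell\ge n$ becomes $\frac13+\frac23 2^{-(n+2)}\ge\frac13=\frac13 g_n|_{I_\ell}$. So the constant $\frac13$ is exactly at the boundary of what your sum produces, and the proof only closes once the $\bE_{-1}=0$ convention is applied correctly at $m=0$. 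With that single correction, your atom-by-atom argument is complete.
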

\begin{proof} Let $\epsilon=\{(-1)^n\}_{n\geq0}$ and consider the operator $T_{\epsilon}$ as in \eqref{T_espilon}, i.e.
$$T_{\epsilon}f=\sum_{m\geq0}(-1)^m(\bE_mf-\bE_{m-1}f),\quad f\in L_1.$$
Clearly,
\begin{align}\label{bemin}
\bE_m\chi_{_{I_n}}=
\begin{cases}
\chi_{_{I_n}},& m\geq n+1;\\
2^{m-1-n}\chi_{_{J_m}},& m\leq n,
\end{cases}
\end{align}
and hence
$$\bE_m\chi_{_{I_n}}=\bE_{m-1}\chi_{_{I_n}},\quad m\geq n+2.$$
Since $\bE_{-1}=0$, it follows that
\begin{align}\label{SXIN}
T_{\epsilon}\chi_{_{I_n}}&~=~\sum_{m=0}^{n+1}(-1)^m(\bE_m\chi_{_{I_n}}
-\bE_{m-1}\chi_{_{I_n}})\nonumber\\
&~=~\sum_{m=0}^{n+1}(-1)^m\bE_m\chi_{_{I_n}}-\sum_{m=0}^n(-1)^{m-1}\bE_m\chi_{_{I_n}}\\
&\stackrel{\eqref{bemin}}{=}(-1)^{n+1}\chi_{_{I_n}}+2\sum_{m=0}^n(-1)^m2^{m-1-n}\chi_{_{J_m}}.\nonumber
\end{align}
By the definition of $J_m$ and $I_m$, we have
$$\chi_{_{J_m}}=\sum_{k= m}^\infty \chi_{_{I_k}}.$$
Therefore,
\begin{align}\label{m0n1m2}
\sum_{m=0}^n(-1)^m2^{m-1-n}\chi_{_{J_m}}&=\sum_{m=0}^n(-1)^m2^{m-1-n}\sum_{k= m}^\infty  \chi_{_{I_k}}\nonumber \\
&=\sum_{k=0}^\infty \chi_{_{I_k}}\sum_{m=0}^{\min\{k,n\}}(-1)^m2^{m-1-n}\\
&=2^{-n-1}\sum_{k=0}^{\infty }\chi_{_{I_k}}\cdot \frac{(-2)^{\min\{k,n\}+1}-1}{-3}\nonumber .
\end{align}
Now,   we arrive at
\begin{align*}
T_{\epsilon}\chi_{_{I_n}}&\stackrel{\eqref{SXIN}}{=}(-1)^{n+1}\chi_{_{I_n}}+2\sum_{m=0}^n(-1)^m2^{m-1-n}\chi_{_{J_m}}\\
&\stackrel{\eqref{m0n1m2}}{=}  (-1)^{n+1}\chi_{_{I_n}}+ 2^{-n }\sum_{k=0}^{\infty }\chi_{_{I_k}}\cdot \frac{(-2)^{\min\{k,n\}+1}-1}{-3}\\
&~= ~(-1)^{n+1}\chi_{_{I_n}} + 2^{-n}\sum_{k=0}^{n-1}\chi_{_{I_k}}\cdot \frac{(-2)^{k+1}-1}{-3}+2^{-n}\sum_{k=n}^{\infty}\chi_{_{I_k}}\cdot \frac{(-2)^{n+1}-1}{-3}.
\end{align*}
If $n$ is even, then
\begin{align}\label{SXINXIN}
T_{\epsilon}\chi_{_{I_n}}+\chi_{_{I_n}}
=2^{-n}\sum_{k=0}^{n-1}\chi_{_{I_k}}\cdot \frac{(-1)^k2^{k+1}+1}{3}+2^{-n}\sum_{k=n}^{\infty}\chi_{_{I_k}}\cdot \frac{2^{n+1}+1}{3}.
\end{align}
Thus, for any even number $n\ge 0$,  we have
\begin{align*}
(T_{\epsilon}\chi_{_{I_n}}+\chi_{_{I_n}})\cdot \chi_{_E}&\stackrel{\eqref{SXINXIN}}{=}\left(2^{-n}\sum_{k=0}^{n-1}\chi_{_{I_k}}\cdot \frac{2^{k+1}+1}{3}+\sum_{k=n}^{\infty}\chi_{_{I_k}}\cdot \frac{2+2^{-n}}{3}\right)\cdot \chi_{_E}\\
&~\geq~ \left(2^{-n}\sum_{k=0}^{n-1}\chi_{_{I_k}}\cdot \frac{2^{k+1}}{3}+\sum_{k=n}^{\infty}\chi_{_{I_k}}\cdot \frac{2}{3}\right)\cdot \chi_{_E}\\
&~=~\frac23\chi_{_E}\cdot \left(\sum_{k=0}^{\infty}2^{-(n-k)_+}\cdot \chi_{_{I_k}}\right).
\end{align*}
Since $T_{\epsilon}+{\rm id}=2T$ (see \eqref{T-0}), the assertion follows.
\end{proof}

\begin{lem}\label{c second compute lemma} For every $n\geq0,$ we have $$g_n\geq \frac12C\chi_{_{J_n}},$$
where $g_n$ is defined by formula  \eqref{gn definition} and $J_n=(0,2^{-n})$.
\end{lem}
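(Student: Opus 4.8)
The plan is to compute both sides explicitly on each interval $I_k = (2^{-k-1}, 2^{-k})$ and compare. First I would observe that $C\chi_{_{J_n}}$ is easy to evaluate directly from the definition $(Cx)(s) = \frac1s\int_0^s x(u)\,du$: for $s \geq 2^{-n}$ we get $(C\chi_{_{J_n}})(s) = \frac1s \cdot 2^{-n}$, while for $s < 2^{-n}$ we get $(C\chi_{_{J_n}})(s) = \frac1s \cdot s = 1$. Thus $C\chi_{_{J_n}} = \chi_{_{J_n}} + 2^{-n}\cdot \frac1s \chi_{_{(2^{-n},1)}}$. On the interval $I_k$ with $k \geq n$ (so $I_k \subset J_n$), we have $C\chi_{_{J_n}} = 1$ there. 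On $I_k$ with $k < n$, we have $s \in (2^{-k-1}, 2^{-k})$, so $\frac1s < 2^{k+1}$, giving $C\chi_{_{J_n}} < 2^{-n} \cdot 2^{k+1} = 2^{-(n-k-1)} = 2\cdot 2^{-(n-k)}$ on $I_k$.

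Next I would recall from \eqref{gn definition} that $g_n|_{I_k} = 2^{-(n-k)_+}$, which equals $1$ for $k \geq n$ and equals $2^{-(n-k)}$ for $k < n$. Comparing interval by interval: for $k \geq n$, both $g_n$ and $\frac12 C\chi_{_{J_n}}$ restrict to values $1$ and $\frac12$ respectively on $I_k$, so $g_n \geq \frac12 C\chi_{_{J_n}}$ holds there. For $k < n$, we have $g_n|_{I_k} = 2^{-(n-k)}$ while $\frac12 C\chi_{_{J_n}}|_{I_k} \leq \frac12 \cdot 2 \cdot 2^{-(n-k)} = 2^{-(n-k)}$, so again $g_n \geq \frac12 C\chi_{_{J_n}}$ on $I_k$. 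Since the intervals $I_k$, $k \geq 0$, cover $(0,1)$ up to a null set, the inequality holds almost everywhere, which completes the proof.

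There is essentially no hard part here — the statement is a routine pointwise computation. The only mild subtlety is the bookkeeping with the exponent $(n-k)_+$ and the fact that the estimate on $C\chi_{_{J_n}}$ for $k < n$ uses the worst-case bound $\frac1s \leq 2^{k+1}$ at the left endpoint of $I_k$; one should note that this supremum is not attained (the interval is open), so a strict inequality $<$ could be recorded, but the non-strict inequality $\leq$ is all that is needed and is cleaner to state. I would present the argument as two short cases ($k \geq n$ and $0 \leq k \leq n-1$) after first establishing the formula for $C\chi_{_{J_n}}$.
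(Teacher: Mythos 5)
Your proof is correct and is essentially identical to the paper's: both evaluate $C\chi_{_{J_n}}$ explicitly, note that it equals $1$ on $J_n$ and equals $2^{-n}/t$ off $J_n$, and then compare with $g_n$ interval by interval on each $I_k$, using the bound $1/t\leq 2^{k+1}$ on $I_k$ for $k<n$. No further comment is needed.
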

\begin{proof} If $t\in I_k,$ $k\geq n,$ then we have $g_n(t)=1$ and $(C\chi_{_{J_n}})(t)=1$.   If $t\in I_k=(2^{-k-1},2^{-k}),$ $k<n,$ then $t\notin J_n,$ and therefore,
$$(C\chi_{_{J_n}})(t)=\frac{m(J_n)}{t}=\frac1{2^nt}\leq 2^{k+1-n}=2\cdot 2^{k-n}=2g_n(t),$$
and the desired inequality follows.
\end{proof}

\begin{proof}[Proof of Proposition \ref{c main lemma}] Applying successively the definition of $f_2$ (see \eqref{f2 definition}), Lemma \ref{c first compute lemma} and Lemma \ref{c second compute lemma}, we obtain
\begin{eqnarray}
(Tf_2)\chi_{_E}&=& \sum_{\substack{n\geq0\\ n\mbox{  \tiny  is even}}}\mu(2^{-n-2}; x)(T\chi_{_{I_n}})\chi_{_E}\nonumber\\
&\geq& \frac13\sum_{\substack{n\geq0\\ n\mbox{  \tiny  is even}}}\mu(2^{-n-2};x)g_n\chi_{_E}\nonumber\\
&\geq&\frac16\chi_{_E}\cdot C\left(\sum_{\substack{n\geq0\\ n\mbox{  \tiny  is even}}}\mu(2^{-n-2}; x)\chi_{_{J_n}}\right).
\label{TFCHI}
\end{eqnarray}

Recall that $I_n=(2^{-n-1},2^{-n})$ and $J_n=(0,2^{-n}).$ Observe that
$$\sum_{\substack{n\geq0\\ n\mbox{  \tiny  is even}}}\mu(2^{-n-2};x)\chi_{_{J_n}}\geq \sum_{\substack{n\geq0\\ n\mbox{  \tiny  is even}}}\mu(2^{-n-2};x)(\chi_{_{I_n}}+\chi_{_{I_{n+1}}})\geq\mu(x),$$
which together with \eqref{TFCHI} yields the assertion.
\end{proof}

\subsection{Proof of Theorem \ref{main thm}}
Here, we complete the proof of  Theorem~\ref{main thm}, which is a simple consequence of the estimates obtained in the previous subsections.

\begin{lem}\label{final pointwise estimate}  Let
$x\in L_1$ and let  $f:=f_1+f_2,$ where
  $f_1$ and $f_2$ are defined in  \eqref{f1 definition} and \eqref{f2 definition}, respectively.
We have
$$|f|\leq 3\sigma_4\mu(x)\;\;\mbox{and}\;\;(Tf)\cdot\chi_{_E} \geq \frac16\chi_{_E}\cdot \sigma_{\frac12}S\mu(x).$$
\end{lem}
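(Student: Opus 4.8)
The plan is to simply assemble the two propositions already proved. Set $f = f_1 + f_2$ with $f_1,f_2$ as in \eqref{f1 definition} and \eqref{f2 definition}. For the size estimate, I would first recall from Lemma~\ref{f1 above lemma} that $|f_1| \le 2\sigma_2\mu(x)$ and from Lemma~\ref{f2 above lemma} that $|f_2| \le \sigma_4\mu(x)$; since $\sigma$ is monotone in the dilation parameter (and $\mu(x)$ is decreasing, so $\sigma_2\mu(x) \le \sigma_4\mu(x)$ pointwise on $(0,1)$), the triangle inequality for the quasi-norm-free pointwise bound gives $|f| \le |f_1| + |f_2| \le 2\sigma_4\mu(x) + \sigma_4\mu(x) = 3\sigma_4\mu(x)$, which is the first claimed inequality.

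For the lower estimate, the key point is that $f_1$ and $f_2$ are supported, after applying $T$ and restricting to $E$, on complementary parities of the blocks $I_m$ — or rather, I would observe that $T$ is linear, so $Tf = Tf_1 + Tf_2$, and then use $(Tf)\chi_E = (Tf_1)\chi_E + (Tf_2)\chi_E$. By Proposition~\ref{cast main lemma}, $(Tf_1)\chi_E \ge \frac{1}{2\log 2}\chi_E \cdot \sigma_{1/2}C^{*}\mu(x)$, and by Proposition~\ref{c main lemma}, $(Tf_2)\chi_E \ge \frac16 \chi_E \cdot C\mu(x) \ge \frac16\chi_E\cdot\sigma_{1/2}C\mu(x)$ (using that dilating by $\tfrac12$ only decreases a decreasing rearrangement-type function, or more carefully that $\sigma_{1/2}C\mu(x) \le C\mu(x)$ pointwise). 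Adding the two bounds and using $\frac{1}{2\log 2} \ge \frac16$ together with $S = C + C^{*}$ and the fact that $\sigma_{1/2}$ commutes with the sum, I get $(Tf)\chi_E \ge \frac16\chi_E\cdot(\sigma_{1/2}C\mu(x) + \sigma_{1/2}C^{*}\mu(x)) = \frac16\chi_E \cdot \sigma_{1/2}S\mu(x)$.

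The only genuine subtlety — and the step I would be most careful about — is the bookkeeping of the various dilation factors: Proposition~\ref{cast main lemma} produces $\sigma_{1/2}C^{*}\mu(x)$ directly, while Proposition~\ref{c main lemma} produces $C\mu(x)$ with no dilation, so I need to uniformize both to $\sigma_{1/2}$. This is harmless because $\sigma_{1/2}y \le y$ pointwise for any decreasing $y$ on $(0,1)$ (the argument $t/ (1/2) = 2t$ pushes into a region where a decreasing function is smaller, extended by $0$), and $C\mu(x)$, $C^{*}\mu(x)$, $S\mu(x)$ are all decreasing; I would state this monotonicity once and invoke it. One should also double-check the constant comparison $\tfrac{1}{2\log 2} \approx 0.72 > \tfrac16$, so replacing the larger constant by $\tfrac16$ in the $f_1$-term is legitimate. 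No other obstacle arises: everything else is linearity of $T$, monotonicity of $\sigma_s$, and the decomposition $S = C + C^{*}$ recorded in the Preliminaries.

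\begin{proof}
Put $f := f_1 + f_2$. Since $\mu(x)$ is a decreasing function on $(0,1)$, we have $\sigma_2\mu(x) \le \sigma_4\mu(x)$ pointwise, and hence, by Lemma~\ref{f1 above lemma} and Lemma~\ref{f2 above lemma},
$$|f| \le |f_1| + |f_2| \le 2\sigma_2\mu(x) + \sigma_4\mu(x) \le 3\sigma_4\mu(x).$$
For the second inequality, note that $C\mu(x)$ and $C^{*}\mu(x)$ are decreasing on $(0,1)$, so $\sigma_{\frac12}C\mu(x) \le C\mu(x)$ pointwise. Using the linearity of $T$, Proposition~\ref{cast main lemma}, Proposition~\ref{c main lemma}, the inequality $\frac{1}{2\log 2} \ge \frac16$, and the identity $S = C + C^{*}$, we obtain
\begin{align*}
(Tf)\chi_{_E}
&= (Tf_1)\chi_{_E} + (Tf_2)\chi_{_E}\\
&\ge \frac1{2\log(2)}\chi_{_E}\cdot \sigma_{\frac12}C^{\ast}\mu(x) + \frac16\chi_{_E}\cdot C\mu(x)\\
&\ge \frac16\chi_{_E}\cdot \sigma_{\frac12}C^{\ast}\mu(x) + \frac16\chi_{_E}\cdot \sigma_{\frac12}C\mu(x)\\
&= \frac16\chi_{_E}\cdot \sigma_{\frac12}S\mu(x).
\end{align*}
This completes the proof.
\end{proof}
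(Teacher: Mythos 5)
Your proof is correct and follows essentially the same route as the paper's: both combine Lemmas~\ref{f1 above lemma} and \ref{f2 above lemma} for the pointwise bound and then add the estimates of Propositions~\ref{cast main lemma} and \ref{c main lemma}, uniformizing the dilation via $\sigma_{\frac12}C\mu(x)\le C\mu(x)$ and replacing $\frac{1}{2\log 2}$ by $\frac16$. The extra justifications you supply (monotonicity of $\mu(x)$ and of $C\mu(x)$) are accurate and harmless.
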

\begin{proof} By Lemmas  \ref{f1 above lemma} and \ref{f2 above lemma}, we have
$$|f|\leq |f_1|+|f_2|\leq 2\sigma_2\mu(x)+\sigma_4\mu(x)\leq 3\sigma_4\mu(x).$$
On the other hand, Propositions \ref{cast main lemma} and \ref{c main lemma} imply
\begin{align*}
(Tf)\cdot\chi_{_E}&=(Tf_1)\cdot\chi_{_E}+(Tf_2)\cdot\chi_{_E}\\
&\geq \frac1{2\log(2)}\chi_{_E}\cdot\sigma_{\frac12}C^{\ast}\mu(x)+ \frac16\chi_{_E}\cdot C\mu(x)\\
&\geq \frac16\chi_{_E}\cdot\sigma_{\frac12}C^{\ast}\mu(x)+ \frac16\chi_{_E}\cdot \sigma_{\frac12}C\mu(x)\\
&=\frac16\chi_{_E}\cdot \sigma_{\frac12}S\mu(x),
\end{align*}
and everything is done.
\end{proof}


\begin{lem}\label{y vs ychie} If $y=\mu(y)\in S(0,1),$ then
$$\frac12\sigma_{\frac14} y \le \mu(\chi_{_E}\cdot y),$$
where $E$ is defined  in \eqref{E definition}.
\end{lem}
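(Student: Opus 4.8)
The plan is to analyze carefully how the set $E=\bigcup_{n\ge 0} I_{2n}$ sits inside $(0,1)$ and to use this to compare the rearrangement of $\chi_{_E}\cdot y$ with a dilation of $y$. Recall that $I_k=(2^{-k-1},2^{-k})$ has measure $2^{-k-1}$, so $E$ consists of the intervals $I_0=(1/2,1)$, $I_2=(1/8,1/4)$, $I_4=(1/32,1/16)$, and so on. The key combinatorial fact I would establish first is a lower bound of the form $m(E\cap (0,2^{-2j}))\ge c\cdot 2^{-2j}$ for an absolute constant $c$; indeed, $E\cap(0,2^{-2j})=\bigcup_{n\ge j} I_{2n}$, whose measure is $\sum_{n\ge j} 2^{-2n-1}=\tfrac{2^{-2j}}{2}\cdot\tfrac{1}{1-1/4}=\tfrac23\cdot 2^{-2j}$. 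More generally, for any $t\in(0,1)$, choosing $j$ with $2^{-2j-2}<t\le 2^{-2j}$, one gets $m(E\cap(0,t))\ge m(E\cap(0,2^{-2j-2}))=\tfrac23\cdot 2^{-2j-2}\ge \tfrac16 t$; with a little more care (using that $I_{2j}\subset(0,2^{-2j})$ already contributes), one can push the constant up to $\tfrac14 t$, which is what the statement requires.

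Next I would translate this measure estimate into the claimed rearrangement inequality. Since $y=\mu(y)$ is decreasing, for any level $s>0$ the set $\{y>s\}$ is an interval $(0,a)$ for some $a=a(s)$, precisely $a=m(\{y>s\})$. Then $\{\chi_{_E}\cdot y>s\}=E\cap(0,a)$, so $m(\{\chi_{_E}\cdot y>s\})=m(E\cap(0,a))\ge \tfrac14 a=\tfrac14 m(\{y>s\})$ by the previous paragraph (applied with $t=a$, valid since $a\le 1$). Unwinding the definition of the decreasing rearrangement, this says exactly that $\mu(\chi_{_E}\cdot y)(t)\ge \mu(y)\big(4t\big)\chi_{_{[0,1]}}(4t)$ pointwise — because $\mu(\chi_{_E}\cdot y)(t)\ge s$ whenever $t<m(\{\chi_{_E}\cdot y>s\})$, and in particular whenever $4t<m(\{y>s\})$, i.e. whenever $s<\mu(y)(4t)$ (for $4t<1$). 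That is precisely the inequality $\mu(\chi_{_E}\cdot y)\ge \sigma_{1/4}\,\mu(y)=\sigma_{1/4}y$. Note that this even gives constant $1$ rather than $\tfrac12$, so if only the constant $\tfrac14 t$ is available (not $\tfrac16 t\cdot$-something) one still comfortably gets the stated $\tfrac12\sigma_{1/4}y\le\mu(\chi_{_E}\cdot y)$; the factor $\tfrac12$ gives slack in case one prefers the cruder bound $m(E\cap(0,t))\ge\tfrac16 t$ combined with the observation that $\sigma_{1/4}$ already ``spends'' a factor.

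The main obstacle, such as it is, is getting the measure comparison constant right rather than off by a factor: one must be a touch careful because $E$ has gaps (the intervals $I_{2n+1}$), so $m(E\cap(0,t))$ is not simply proportional to $t$, only comparable. I would handle this by the dyadic case analysis above — fix $j$ with $2^{-2(j+1)}\le t<2^{-2j}$ (or the analogous window), bound $m(E\cap(0,t))$ below by the full geometric tail $\sum_{n>j}2^{-2n-1}=\tfrac23 2^{-2j-2}$, and then compare $\tfrac23 2^{-2j-2}$ to $t<2^{-2j}$, which yields the ratio $\tfrac16$; the extra room to reach $\tfrac14$ (or just to absorb into the $\tfrac12$ in the statement) comes from also counting $I_{2j}$ when $t$ lies in the upper part of its window. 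Everything else is bookkeeping with the definition of $\mu$ and the identity $\sigma_s\mu(y)=\mu(y)(t/s)\chi_{[0,1]}(t/s)$.
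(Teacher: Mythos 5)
Your argument is correct, but it takes a genuinely different route from the paper's. You work directly with distribution functions: since $y=\mu(y)$ is decreasing, $\{\chi_{_E}\cdot y>s\}=E\cap(0,m(\{y>s\}))$, and the lemma reduces to the density estimate $m(E\cap(0,t))\ge\tfrac14 t$ for $t\in(0,1]$. Your dyadic case analysis (the geometric tail $\sum_{n>j}2^{-2n-1}$ plus the contribution of $I_{2j}$ when $t$ lies in its upper window) does deliver this; in fact the sharp constant is $\tfrac13$, so your method even yields the stronger conclusion $\sigma_{1/4}y\le\mu(\chi_{_E}\cdot y)$ with no factor $\tfrac12$. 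The paper avoids all distribution-function bookkeeping: it notes the pointwise domination $\chi_{_{E^c}}\cdot y\le\sigma_2(\chi_{_E}\cdot y)$ (the map $t\mapsto t/2$ sends each gap $I_{2n+1}$ into $I_{2n+2}\subset E$, and $y$ is decreasing) and then applies the subadditivity of the rearrangement, $\mu(t_1+t_2;f+g)\le\mu(t_1;f)+\mu(t_2;g)$, to the decomposition $y=\chi_{_E}\cdot y+\chi_{_{E^c}}\cdot y$. That proof is shorter and shows exactly where the constants $\tfrac12$ and $\tfrac14$ come from; yours is more elementary and quantitatively sharper.

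One caveat: your parenthetical claim that the cruder bound $m(E\cap(0,t))\ge\tfrac16 t$ would already suffice because ``the factor $\tfrac12$ gives slack'' is incorrect. That bound only gives $\mu(\chi_{_E}\cdot y)\ge\sigma_{1/6}y$, and the inequality $\tfrac12\sigma_{1/4}y\le\sigma_{1/6}y$ fails for general decreasing $y$ (take $y=\chi_{_{(0,5t_0)}}$ and evaluate at $t_0$: the left side is $\tfrac12$, the right side is $0$). The factor $\tfrac12$ multiplies the \emph{value} of $y$, whereas weakening the density constant from $\tfrac14$ to $\tfrac16$ rescales its \emph{argument}, and the two are not interchangeable for an arbitrary decreasing function. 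So in your approach the refinement to the constant $\tfrac14$ is genuinely needed; fortunately, as noted above, it holds.
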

\begin{proof} Observe that from the definition of $E$ it follows
$$\chi_{_{E^c}}\cdot y\leq \sigma_2(\chi_{_E}\cdot y).$$
Thus, by \cite[Proposition~2.1.7]{BSh}, we have
\begin{align*}
\frac12 \sigma_{\frac14} y=\frac12\sigma_{\frac14} \mu\big(\chi_{_{E^c}}\cdot y+\chi_{_E}\cdot y\big)&\leq \frac12\sigma_{\frac12} \mu(\chi_{_{E^c}}\cdot y)+ \frac12\sigma_{\frac12} \mu(\chi_{_E}\cdot y)\\
&\leq  \frac12\mu(\chi_{_E}\cdot y)+\frac12 \sigma_{\frac12} \mu(\chi_{_E}\cdot y)\leq  \mu(\chi_{_E}\cdot y),
\end{align*}
and  the proof is completed.
\end{proof}

\begin{proof}[Proof of Theorem \ref{main thm}]

Let $f$ be defined as in Lemma \ref{final pointwise estimate}. Then,  $|f|\leq 3\sigma_4\mu(x).$ Moreover, by Lemmas \ref{final pointwise estimate} and \ref{y vs ychie}, we have
$$\mu(Tf)\geq \mu((Tf)\cdot\chi_{_E})\geq \frac16\mu(\chi_{_E}\cdot\sigma_{\frac12}S\mu(x))\geq \frac1{12}\sigma_{\frac18}S\mu(x).$$
\end{proof}

\section{Applications to the geometry of  Banach spaces}
\subsection{Optimal symmetric quasi-Banach range for the
martingale transforms}\label{S}

From Theorem \ref{main thm} and estimate \eqref{above estimate} it follows that  the optimal symmetric Banach  range of the martingale transform $T$ on a quasi-Banach symmetric function space $E$ coincides with that of the Calder\'{o}n operator $S$ on $E$. Thus, we arrive at the problem of a description of the least receptacle of the operator  $S$ acting on 
$E$. To solve the latter problem, we
  employ the
  description of the optimal symmetric  range for the Calder\'{o}n operator defined on a quasi-Banach symmetric space on $(0,\infty)$ given in \cite{STZ}.

For definitions related to quasi-Banach symmetric spaces on $(0,\infty)$ (which differ only slightly from  those in the case $[0,1]$)  we refer the reader to the books \cite{BSh,KPS,LT}. In particular, $S(0,\infty )$ is the set of all measurable functions $x$ on $(0,\infty)$ such that $m(\{t : |x(t)| > s\})$ is finite for some $s > 0.$

Recall that
$$
L_{1,\infty}(0,\infty):=\{f\in S(0,\infty): \norm{f}_{L_{1,\infty}(0,\infty)}:=\sup_{t>0}t\mu(t;f)<\infty\}$$
and
$$
\Lambda_{\log}(0,\infty ):= \left\{x \in S(0,\infty ):\norm{x}_{\Lambda_{\log }(0,\infty )}:=\int_0^\infty \mu(s;x)\frac{ds}{s+1}<\infty  \right\}.$$

  The Calder\'{o}n operator (on the semiaxis) is given by
$$ (S_\infty x)(t):=\frac{1}{t}\int_0^t x(s)ds +\int_t^\infty x(s)\frac{ds}{s}, ~x\in \Lambda_{\log}(0,\infty) .$$

For convenience of the reader, we describe first shortly the main result in \cite{STZ}. Further, we still denote a symmetric function space on $[0,1]$ by $E$, while the notation $E(0,\infty)$ will be reserved for symmetric function spaces on $(0,\infty)$.

Given quasi-Banach symmetric space $E(0,\infty)$ such that  $E(0,\infty )\subset \Lambda_{\log}(0,\infty )$, we define the linear space $\cS_E(0,\infty )$ by
\begin{align}\label{ETOF2}
\cS_E(0,\infty )= \left\{x\in  (L_{1,\infty}+L_{\infty})(0,\infty ):  \exists y\in E(0,\infty ), \, \mu(x)\leq S_\infty  \mu(y) \right \},
\end{align}
equipped with the functional
$$
x\mapsto \left\|x\right\|_{\cS_{E(0,\infty )}}:=\inf\{\left\|y\right\|_{E }:\mu(x)\leq S\mu(y)\}.$$




\begin{thm}\label{quasi-banach opt range}\cite[Theorem 26]{STZ} Let $E(0,\infty )\subset \Lambda_{\log}(0,\infty  )$ be a quasi-Banach symmetric space on $(0,\infty ).$ We have
\begin{enumerate}[{\rm (i)}]
\item $(\mathcal{S}_E(0,\infty  ),\left\|\cdot\right\|_{\mathcal{S}_E}(0,\infty ))$ is a quasi-Banach symmetric function space.
\item $\mathcal{S}_E(0,\infty  )$ is the optimal symmetric quasi-Banach range for the operator $S$ on $E(0,\infty ).$
\end{enumerate}
\end{thm}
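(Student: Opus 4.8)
The statement to prove is Theorem~\ref{quasi-banach opt range}, which asserts that $\mathcal{S}_E(0,\infty)$, defined as in \eqref{ETOF2}, is a quasi-Banach symmetric function space and is the optimal symmetric quasi-Banach range for $S_\infty$ on $E(0,\infty)$. Since this is quoted from \cite{STZ}, my plan is to reconstruct the argument in a self-contained way.

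\textbf{Step 1: $\mathcal{S}_E(0,\infty)$ is well defined and is a symmetric function space.} First I would check that the functional $\|\cdot\|_{\mathcal{S}_E(0,\infty)}$ is finite on $\mathcal{S}_E(0,\infty)$: by definition every $x$ in the space admits a majorant $y\in E(0,\infty)$, so the infimum is over a nonempty set. The symmetry axioms (b) and (c) of Definition~\ref{Sym} are almost immediate from the definition since membership and the functional depend only on $\mu(x)$ and the inequality $\mu(x)\le S_\infty\mu(y)$ is monotone in $\mu(x)$. The embedding $\mathcal{S}_E(0,\infty)\subset (L_{1,\infty}+L_\infty)(0,\infty)$ is built into the definition; conversely one should note that $E(0,\infty)\subset\Lambda_{\log}(0,\infty)$ guarantees $S_\infty\mu(y)$ is a well-defined finite-valued decreasing function (the Hardy part lands in $L_{1,\infty}$, the $C^*$-part is finite pointwise on $(0,\infty)$ because of the $\Lambda_{\log}$ condition), so the majorization constraint is meaningful.

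\textbf{Step 2: the quasi-triangle inequality and completeness.} For the quasi-norm property, given $x_1,x_2$ with majorants $y_1,y_2$, I would use subadditivity of the rearrangement together with the fact that $S_\infty$ is a positive linear operator to get $\mu(x_1+x_2)\le \mu(x_1)+\sigma_{?}\mu(x_1)+\dots$ — more precisely one uses $\mu(t;x_1+x_2)\le \mu(t/2;x_1)+\mu(t/2;x_2)$ and the commutation/boundedness of dilations on $S_\infty$-images, i.e. $\sigma_2 S_\infty\mu(y)\le 2 S_\infty\sigma_2\mu(y)$ or a similar estimate, to produce a majorant of $x_1+x_2$ of the form $S_\infty$ applied to something controlled by $\|y_1\|_E+\|y_2\|_E$ up to a constant; this gives the quasi-triangle inequality with a finite constant (and one keeps track that $E$ itself is only a quasi-Banach space, so dilation operators are bounded on $E$ with some constant). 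Completeness is the more delicate point: I would take a quasi-norm-Cauchy sequence, pass (by the Aoki--Rolewicz theorem, or directly) to a subsequence with rapidly summing quasi-norms, choose near-optimal majorants $y_k\in E(0,\infty)$, show $\sum_k y_k$ (suitably rearranged/dilated) converges in $E$ using completeness of $E$, and then check the limit of the $x_k$'s is majorized by $S_\infty$ of the limiting $y$, hence lies in $\mathcal{S}_E(0,\infty)$. One also needs that convergence in $\mathcal{S}_E$ implies convergence in $(L_{1,\infty}+L_\infty)(0,\infty)$ (this follows from a bound $\|x\|_{(L_{1,\infty}+L_\infty)(0,\infty)}\le C\|x\|_{\mathcal{S}_E(0,\infty)}$, itself coming from the weak-type $(1,1)$ and $L_\infty$ bounds on $S_\infty$), so that the pointwise/in-measure limit is identified correctly.

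\textbf{Step 3: optimality.} Here I would argue in two directions. (a) $S_\infty$ maps $E(0,\infty)$ into $\mathcal{S}_E(0,\infty)$ boundedly: for $y\in E(0,\infty)$, the element $x=S_\infty\mu(y)$ satisfies $\mu(x)=S_\infty\mu(y)$ (since $S_\infty\mu(y)$ is already decreasing) so $x\in\mathcal{S}_E(0,\infty)$ with $\|x\|_{\mathcal{S}_E(0,\infty)}\le\|y\|_E$; and since $\mu(S_\infty y)\le C S_\infty\mu(y)$ by the analogue of \eqref{above estimate} for $S_\infty$ (this is standard — $S_\infty$ is positive and commutes with rearrangement up to a constant), one gets $\|S_\infty y\|_{\mathcal{S}_E(0,\infty)}\le C\|y\|_E$ for general $y$. (b) $\mathcal{S}_E(0,\infty)$ is the \emph{smallest} symmetric quasi-Banach space into which $S_\infty$ maps $E(0,\infty)$ boundedly: if $F(0,\infty)$ is any such space with $\|S_\infty\|_{E\to F}<\infty$, then for any $x\in\mathcal{S}_E(0,\infty)$ pick a majorant $y$ with $\|y\|_E\le 2\|x\|_{\mathcal{S}_E(0,\infty)}$; then $\mu(x)\le S_\infty\mu(y)=\mu(S_\infty\mu(y))$, and since $S_\infty\mu(y)\in F(0,\infty)$ with $\|S_\infty\mu(y)\|_F\le \|S_\infty\|_{E\to F}\|\mu(y)\|_E=\|S_\infty\|_{E\to F}\|y\|_E$, the ideal property (b) of $F$ gives $x\in F(0,\infty)$ with $\|x\|_F\le 2\|S_\infty\|_{E\to F}\|x\|_{\mathcal{S}_E(0,\infty)}$, i.e. $\mathcal{S}_E(0,\infty)\hookrightarrow F(0,\infty)$ continuously.

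\textbf{Main obstacle.} I expect the genuinely technical part to be Step 2 — verifying that $\|\cdot\|_{\mathcal{S}_E(0,\infty)}$ is a complete quasi-norm. The quasi-triangle inequality requires care with how dilation operators interact with $S_\infty$ (and the quasi-norm constant of $E$), and completeness requires the extraction-of-summable-subsequence argument together with a correct identification of the limit via an in-measure (or $(L_{1,\infty}+L_\infty)$) limit. Everything in Step 3 is then a soft consequence of the ideal property and the definition. Since the full details are in \cite[Theorem 26]{STZ}, in the paper I would state the reduction and refer there, but the proof sketch above is the route I would follow to reprove it.

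\begin{proof}[Proof sketch]
As the statement is \cite[Theorem 26]{STZ}, we only indicate the structure of the argument. That $\mathcal{S}_E(0,\infty)$ satisfies the ideal and rearrangement axioms of Definition~\ref{Sym} is immediate from \eqref{ETOF2}, since both membership and the functional $\|\cdot\|_{\mathcal{S}_E(0,\infty)}$ depend only on $\mu(x)$ through the monotone constraint $\mu(x)\le S_\infty\mu(y)$, and the hypothesis $E(0,\infty)\subset\Lambda_{\log}(0,\infty)$ ensures $S_\infty\mu(y)$ is a finite-valued decreasing function for each $y\in E(0,\infty)$, so that the defining constraint is meaningful and $\mathcal{S}_E(0,\infty)\subset(L_{1,\infty}+L_\infty)(0,\infty)$. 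The quasi-triangle inequality follows by combining subadditivity of the rearrangement, $\mu(t;x_1+x_2)\le\mu(t/2;x_1)+\mu(t/2;x_2)$, with the boundedness of dilation operators on $E(0,\infty)$ and their commutation with $S_\infty$ up to a constant, which produces from near-optimal majorants $y_1,y_2$ of $x_1,x_2$ a majorant of $x_1+x_2$ controlled by a fixed multiple of $\|y_1\|_E+\|y_2\|_E$. Completeness is obtained by passing to a subsequence with rapidly decreasing quasi-norms (via the Aoki--Rolewicz theorem), choosing near-optimal majorants in $E(0,\infty)$, summing them in the complete space $E(0,\infty)$, and checking that the resulting sum majorizes (via $S_\infty$) the limit of the original sequence; here one uses the bound $\|x\|_{(L_{1,\infty}+L_\infty)(0,\infty)}\le C\|x\|_{\mathcal{S}_E(0,\infty)}$, a consequence of the weak type $(1,1)$ and $L_\infty$ boundedness of $S_\infty$, to identify the limit correctly. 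This proves (i).

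For (ii), first $S_\infty$ maps $E(0,\infty)$ boundedly into $\mathcal{S}_E(0,\infty)$: for $y\in E(0,\infty)$ the majorization $\mu(S_\infty y)\le C\,S_\infty\mu(y)$ (valid for the positive operator $S_\infty$, exactly as in \eqref{above estimate}) shows $S_\infty y\in\mathcal{S}_E(0,\infty)$ with $\|S_\infty y\|_{\mathcal{S}_E(0,\infty)}\le C\|y\|_E$. Conversely, if $F(0,\infty)$ is any symmetric quasi-Banach space with $\|S_\infty\|_{E(0,\infty)\to F(0,\infty)}<\infty$, then for $x\in\mathcal{S}_E(0,\infty)$ choose $y\in E(0,\infty)$ with $\mu(x)\le S_\infty\mu(y)$ and $\|y\|_E\le 2\|x\|_{\mathcal{S}_E(0,\infty)}$; since $S_\infty\mu(y)=\mu(S_\infty\mu(y))\in F(0,\infty)$ with $\|S_\infty\mu(y)\|_F\le\|S_\infty\|_{E\to F}\|y\|_E$, the ideal property of $F(0,\infty)$ forces $x\in F(0,\infty)$ with $\|x\|_F\le 2\|S_\infty\|_{E\to F}\|x\|_{\mathcal{S}_E(0,\infty)}$. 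Hence $\mathcal{S}_E(0,\infty)$ embeds continuously into every such $F(0,\infty)$, which is precisely the asserted optimality.
\end{proof}
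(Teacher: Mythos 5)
The paper gives no proof of this statement: it is imported verbatim as \cite[Theorem 26]{STZ}, and the only related argument the authors supply is the transference in the proof of Theorem~\ref{optimal01}, which reduces the $(0,1)$ case to this cited result. So there is nothing to compare line by line; what I can say is that your reconstruction is correct in outline and is the standard route. A few confirmations: the ideal and rearrangement axioms are indeed immediate from \eqref{ETOF2}; for the quasi-triangle inequality your dilation device works even better than you suggest, since on the half-line $\sigma_2$ commutes \emph{exactly} with $S_\infty$ (writing $S_\infty x(t)=\int_0^\infty x(s)\min(t^{-1},s^{-1})\,ds$ and substituting $s=u/2$ gives $\sigma_2S_\infty=S_\infty\sigma_2$), so $\mu(x_1+x_2)\le S_\infty\bigl(\sigma_2\mu(y_1)+\sigma_2\mu(y_2)\bigr)$ and the constant is controlled by the quasi-norm constant of $E$ and $\|\sigma_2\|_{E\to E}$; and in Step 3(a) one can take $C=1$, since $|S_\infty y|\le S_\infty|y|\le S_\infty\mu(y)$ by Hardy--Littlewood applied to the decreasing kernel $s\mapsto\min(t^{-1},s^{-1})$, and $S_\infty\mu(y)$ is itself decreasing. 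The completeness argument (Aoki--Rolewicz, near-optimal majorants summed in $E$, identification of the limit through the continuous embedding into $(L_{1,\infty}+L_\infty)(0,\infty)$) is exactly the delicate point you flag, and your sketch of it is the right one. I see no gap beyond the level of detail you yourself acknowledge as deferred to \cite{STZ}.
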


 Below, we obtain a similar identification of the optimal symmetric  range for the Calder\'{o}n operator on a given quasi-Banach symmetric space on $(0,1)$.

\begin{definition}\label{quasi-banach range}
Let $E$ be a quasi-Banach symmetric space on $(0,1)$ such that $E\subset L_1.$ Define the linear space
\begin{align}\label{ETOF}
\cS_E= \left\{x\in  L_{1,\infty}=L_{1,\infty}(0,1):\,  \exists y\in E , \, \mu(x)\leq S\mu(y) \right \},
\end{align}
and equip it with the functional
$$
x\mapsto \left\|x\right\|_{\cS_E}:=\inf\{\left\|y\right\|_{E }:\mu(x)\leq S\mu(y)\}.$$
\end{definition}


\begin{theorem}\label{optimal01} Let $E\subset L_1$ be a quasi-Banach symmetric space on $(0,1).$ We have
\begin{enumerate}[{\rm (i)}]
\item $(\mathcal{S}_E,\left\|\cdot\right\|_{S_E})$ is a quasi-Banach symmetric function space.
\item $\mathcal{S}_E $ is the optimal symmetric quasi-Banach range for the operator $S$ on $E.$
\end{enumerate}
\end{theorem}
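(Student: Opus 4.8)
The plan is to transfer Theorem~\ref{quasi-banach opt range} from the semiaxis to the unit interval by a ``folding'' argument, using the natural correspondence between symmetric spaces on $(0,1)$ and a class of symmetric spaces on $(0,\infty)$. Given $E\subset L_1$ on $(0,1)$, I would first associate to it a symmetric space $\widetilde{E}$ on $(0,\infty)$ whose functions supported on $(0,1)$ reproduce $E$ — the cleanest choice being the ``amalgam'' type space with norm $\|x\|_{\widetilde E}:=\|\mu(x)\chi_{(0,1)}\|_E+\|\sigma-\text{shift of the tail}\|$, or more simply working with $\widetilde E = E(0,\infty)$ defined via $\|x\|_{\widetilde E}=\sup_{k\ge 0}\|\mu(x)\chi_{(2^k-1,2^k)}(\cdot+ \text{translation})\|$-style construction; the key requirement is that $\widetilde E\subset \Lambda_{\log}(0,\infty)$ (which follows from $E\subset L_1$ since on $(0,1)$ the weight $ds/(s+1)$ is comparable to $ds$) so that Theorem~\ref{quasi-banach opt range} applies. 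Then I would check the two compatibility facts: (a) $S_\infty$ restricted to functions on $(0,1)$ agrees, up to constants, with $S$ (the tail integral $\int_1^\infty$ vanishes and the Hardy part is literally the same), and (b) $\cS_E$ as defined in Definition~\ref{quasi-banach range} is, up to equivalence of quasi-norms, the ``restriction to $(0,1)$'' of $\cS_{\widetilde E}(0,\infty)$.

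The proof of (i) then proceeds as follows. Properties (a), (b), (c) of Definition~\ref{Sym} for $(\cS_E,\|\cdot\|_{\cS_E})$ are routine from the definition \eqref{ETOF}: the class is rearrangement-invariant by construction, monotone in $|x|$ since $\mu(x_1)\le\mu(x_2)$ when $|x_1|\le|x_2|$, and $\|\cdot\|_{\cS_E}$ inherits monotonicity from the infimum. The substantive point is that $\|\cdot\|_{\cS_E}$ is a genuine quasi-norm (the quasi-triangle inequality with a finite constant) and that $\cS_E$ is complete. For the quasi-triangle inequality I would use subadditivity of $S$ together with the inequality $\mu(x_1+x_2)\le \sigma_2\mu(x_1)+\sigma_2\mu(x_2)$ (cf.\ \cite[Proposition~2.1.7]{BSh}) and boundedness of $\sigma_2$ on $E$; this yields $\|x_1+x_2\|_{\cS_E}\lesssim \|x_1\|_{\cS_E}+\|x_2\|_{\cS_E}$ with constant depending on the modulus of concavity of $E$ and on $\|\sigma_2\|_{E\to E}$. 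For completeness, I would either argue directly — a $\cS_E$-Cauchy sequence gives an $E$-Cauchy sequence of ``dominating'' functions $y_n$, pass to a limit $y\in E$, and use that $S:E\to L_{1,\infty}$ together with Fatou-type lower semicontinuity of $\mu(\cdot)$ under convergence in measure gives $\mu(x)\le S\mu(y)$ for the limit $x$ — or simply invoke completeness of $\cS_{\widetilde E}(0,\infty)$ from Theorem~\ref{quasi-banach opt range}(i) via the identification in the previous paragraph.

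For (ii), optimality, there are two directions. First, $T$ (equivalently, by estimate \eqref{above estimate}, the operator $S$) maps $E$ into $\cS_E$ boundedly: this is immediate from the definition, since for $x\in E$ the element $Sx$ satisfies $\mu(Sx)\le C_{\mathrm{abs}}S\mu(x)$, so $\mu(S\mu(x))\le C_{\mathrm{abs}}^{-1}\cdot$ something — more precisely $\|Sx\|_{\cS_E}\le \|x\|_E$ by taking $y=x$, and combined with \eqref{above estimate}, $\|Tx\|_{\cS_E}\lesssim\|x\|_E$. For minimality: suppose $F$ is a symmetric quasi-Banach space on $(0,1)$ such that $T:E\to F$ is bounded. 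By Theorem~\ref{main thm}, for each $x\in E$ there is $f$ with $|f|\le 3\sigma_4\mu(x)$ (hence $f\in E$ with $\|f\|_E\lesssim\|x\|_E$) and $\sigma_{1/8}S\mu(x)\le 12\mu(Tf)$; since $Tf\in F$ and $F$ is symmetric with $\sigma_8$ bounded on it, $S\mu(x)\in F$ with $\|S\mu(x)\|_F\lesssim\|Tf\|_F\le\|T\|_{E\to F}\|f\|_F\lesssim\|x\|_E$. This shows every $x\in\cS_E$ — whose defining property is $\mu(x)\le S\mu(y)$ for some $y\in E$ — lies in $F$ with $\|x\|_F\lesssim\|x\|_{\cS_E}$, i.e.\ $\cS_E\hookrightarrow F$ continuously, which is exactly the optimality assertion.

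The main obstacle I anticipate is the folding identification: verifying carefully that the space $\widetilde E$ on $(0,\infty)$ can be chosen so that simultaneously (1) it is a bona fide quasi-Banach symmetric space, (2) $\widetilde E\subset\Lambda_{\log}(0,\infty)$, and (3) the induced $\cS_{\widetilde E}(0,\infty)$ restricts correctly to $\cS_E$ with equivalent quasi-norms — the subtlety being the behaviour of $S_\infty$ versus $S$ near $t=1$ and for large $t$, where the tail contributions must be controlled by the $L_{1,\infty}$ membership. If this transfer proves delicate, the alternative is to bypass \cite{STZ} entirely and prove (i)--(ii) directly on $(0,1)$ using Theorem~\ref{main thm} and \eqref{above estimate} as the only inputs, along the lines sketched above for parts (i) and (ii); this self-contained route is probably cleaner and is the one I would ultimately pursue.
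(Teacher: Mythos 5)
Your primary route is essentially the paper's proof: the authors take exactly the amalgam space you name first, $F(0,\infty)$ with $\|x\|_{F(0,\infty)}=\|\mu(x)\chi_{(0,1)}\|_E+\|x\|_{L_1(0,\infty)}$ (so that $F(0,\infty)\subset L_1(0,\infty)\subset\Lambda_{\log}(0,\infty)$), prove the two-sided quasi-norm equivalence $\|x\|_{\cS_E}\approx\|x\|_{\cS_F(0,\infty)}$ for $x$ supported on $(0,1)$, and import quasi-normability and completeness from Theorem~\ref{quasi-banach opt range}; the ``folding'' obstacle you flag is handled precisely by replacing $y=\mu(y)\in F(0,\infty)$ with $z=\bigl(y+\int_1^\infty y(s)\,ds/s\bigr)\chi_{(0,1)}$, for which $S_\infty y\le Sz$ on $(0,1)$ and $\|z\|_E\le\|y\|_{F(0,\infty)}$. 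Part (ii) is, as you suspect, immediate from Definition~\ref{quasi-banach range}; your detour through $T$ and Theorem~\ref{main thm} is the content of Corollary~\ref{opt-gen} rather than of this theorem.
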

\begin{proof}(i).  For simplicity of notations, we may assume  that $\left\|\chi_{_{(0,1)}} \right\|_{E} =1.$
Define  a symmetric quasi-Banach function space $F(0,\infty )$ on $(0,\infty)$ by setting
$$F(0,\infty ):= \left\{x\in L_1 (0,\infty ):
\left\|x\right \|_{F(0,\infty )}:=\left\|
\mu(x)\chi_{_{(0,1)}}\right\|_{E}+\left\|x\right\|_{L_1(0,\infty )}<\infty \right\}.$$

We claim that for every $x$ supported on $(0,1),$ we have
\begin{equation}\label{e vs f}
\frac14\left\|x\right\|_{\cS_F(0,\infty )}\leq \left\|x\right\|_{\cS_E}\leq 2 \left\|x\right\|_{\cS_F(0,\infty )}.
\end{equation}
Indeed, if $x\in \cS_E,$ then there exists $y\in E$ such that $$\mu(x)\leq S\mu(y) \mbox{ and }\left\|y\right\|_{E}\leq 2\left\|x\right\|_{\cS_E}.$$
Extending $y$ to a function on $(0,\infty)$ by setting $y=0$ on $(1,\infty),$ we still have $\mu(x)\leq S_\infty \mu(y)$. Moreover, in view of the embedding $E\subset L_1$ with constant $1$ (see e.g. \cite[Theorem~II.4.1]{KPS}), it holds
$$\left\|y\right\|_{F(0,\infty )}=
\left\|y\right\|_{E}+\left\|y\right\|_{L_1(0,\infty )}
\leq 2\left\|y\right\|_{E}\leq 4\left\|x\right\|_{\cS_E}.$$
Taking the infimum over all such $y,$ we infer that $\left\|x\right\|_{\cS_F(0,\infty )}\leq 4 \left\|x\right\|_{\cS_E}.$

Next, let $x\in \cS_{F(0,\infty)}$ with support in $(0,1)$ and let $y\in F(0,\infty )$ be such that $\mu(x)\leq S_\infty \mu(y)$ and $\left\|y\right\|_{F(0,\infty)}
\leq 2\left\|x\right\|_{\cS_F(0,\infty )}$ (see Theorem \ref{quasi-banach opt range}).
 Without loss of generality, we may assume that $y=\mu(y).$ Set
$$z(t)=\Big(y(t)+\int_1^{\infty}y(s)\frac{ds}{s}\Big)\chi_{_{(0,1)}}(t),\quad t\in(0,1).$$
We have
$$\mu(t;x)\leq (S_\infty y)(t) \leq (Sz)(t),\quad t\in(0,1).$$
Also,
\begin{align*}
\left\|z\right\|_{E}&\leq \left\|
y\chi_{_{(0,1)}}\right\|_{E}
+\int_1^{\infty}y(s)\frac{ds}{s}\\
&\leq \left\|y\chi_{_{(0,1)}}\right\|_{E}+\left\|y\right\|_{L_1(0,\infty )}\\
&=\|y\|_{F(0,\infty)}\leq 2\left \|x\right\|_{\cS_F(0,\infty )}.
\end{align*}
Taking the infimum over all such $z,$ we get $\left\|x\right\|_{\cS_E}\leq 2 \left\|x\right\|_{\cS_F(0,\infty )} .$

Clearly, $\left\|\cdot\right\|_{\cS_E}$ is a homogeneous functional. Since $\left\|\cdot \right\|_{\cS_F(0,\infty )}$ is a quasi-norm (see Theorem \ref{quasi-banach opt range} above), it follows from \eqref{e vs f} that $\left\|\cdot\right\|_{\cS_E}$ is also a quasi-norm.

Let us now prove the completeness of $(\cS_E,\left\|\cdot\right\|_{\cS_E}).$ Let $(x_n)_{n\geq0}$ be a Cauchy sequence in $\cS_E.$ By \eqref{e vs f}, $(x_n)_{n\geq0}$ is a Cauchy sequence in $\cS_F(0,\infty ).$ By the completeness of $\cS_F(0,\infty ),$ we have that $x_n\to x$ in $\cS_F(0,\infty ).$ Clearly, $x$ is also supported on $(0,1).$ Again using \eqref{e vs f}, we conclude that $x_n\to x$ in $\cS_E.$
On the other hand, by the definition of $\norm{\cdot}_{\cS_E}$,
we obtain that the quasi-norm $\norm{\cdot}_{\cS_E} $ is  symmetric.

(ii)
From the definition of $\cS_E$ it follows immediately that $\cS_E$ is the minimal receptacle of the operator $S$ in the category of quasi-Banach symmetric function spaces (see also \cite[p.3549]{STZ} for a full proof in the setting of $(0,\infty)$).
\end{proof}

  The following result is a combination of Theorems \ref{main thm} and  \ref{optimal01} with estimate \eqref{above estimate}.
 \begin{corollary}
 \label{opt-gen}
Assume  that   $E\subset L_1$  is   quasi-Banach symmetric function space on $(0,1)$. Then,  the space $\mathcal{S}_E$ generated by  the Calder\'{o}n operator $S$ is the optimal symmetric quasi-Banach range for the martingale transform  $T$ on $E$.
 \end{corollary}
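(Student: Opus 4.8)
The plan is to derive Corollary~\ref{opt-gen} as a bookkeeping exercise combining the two-sided comparison of $T$ with $S$ that is now available. Recall that a quasi-Banach symmetric space $G\subset L_{1,\infty}$ is said to be an (optimal) symmetric quasi-Banach \emph{range} for an operator $U$ on $E$ if $U$ maps $E$ boundedly into $G$, and $G$ is the smallest such space, i.e.\ whenever $U$ maps $E$ boundedly into a symmetric quasi-Banach space $F$, one has $G\hookrightarrow F$. By Theorem~\ref{optimal01}(ii), $\mathcal{S}_E$ has exactly this property for the Calder\'on operator $S$. So the task reduces to showing that the class of symmetric quasi-Banach spaces $F$ into which $T$ is bounded coincides with the class of those into which $S$ is bounded; then the least element of one class is the least element of the other.

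First I would handle the implication ``$S$ bounded $\Rightarrow$ $T$ bounded''. Suppose $S\colon E\to F$ is bounded, and let $x\in E$. By the pointwise upper estimate \eqref{above estimate}, $\mu(Tx)\le C_{\mathrm{abs}}\, S\mu(x)$, and since $S\mu(x)=\mu(S\mu(x))$ (as $S$ applied to a decreasing function is decreasing) and $\mu(x)\in E$ with $\|\mu(x)\|_E=\|x\|_E$, the symmetry and ideal properties (Definition~\ref{Sym}(b),(c)) of $F$ give $Tx\in F$ with $\|Tx\|_F\le C_{\mathrm{abs}}\|S\|_{E\to F}\|x\|_E$. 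Hence $T\colon E\to F$ is bounded. Conversely, suppose $T\colon E\to F$ is bounded, and let $x\in E$. Apply Theorem~\ref{main thm}: there exists $f\in L_1$ with $|f|\le 3\sigma_4\mu(x)$ and $\sigma_{1/8}S\mu(x)\le 12\,\mu(Tf)$. Since $\sigma_4$ is bounded on $E$ and $|f|\le 3\sigma_4\mu(x)$, property (b) of Definition~\ref{Sym} gives $f\in E$ with $\|f\|_E\le 3\|\sigma_4\|_{E\to E}\|x\|_E$; then $Tf\in F$ by hypothesis, and from $\sigma_{1/8}S\mu(x)\le 12\mu(Tf)$ together with the boundedness of $\sigma_8$ on $F$ and properties (b),(c) we get $S\mu(x)=\sigma_8\sigma_{1/8}S\mu(x)$ (valid because $S\mu(x)$ is supported on all of $(0,1)$ only after the dilation, so one should instead argue $\|S\mu(x)\|_F\le \|\sigma_8\|_{F\to F}\|\sigma_{1/8}S\mu(x)\|_F\le 12\|\sigma_8\|_{F\to F}\|Tf\|_F$) and therefore $S\colon E\to F$ is bounded with an explicit constant. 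One caveat to state carefully: $Sx$ and $S\mu(x)$ have the same decreasing rearrangement, so $\|Sx\|_F=\|S\mu(x)\|_F$, which is what lets us pass from the estimate on $\mu(x)$ back to general $x$.

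Having established that the two classes of admissible range spaces coincide, I would then invoke Theorem~\ref{optimal01}: $\mathcal{S}_E$ is a quasi-Banach symmetric function space (part (i)) and is the optimal symmetric quasi-Banach range for $S$ on $E$ (part (ii)); in particular $S\colon E\to\mathcal{S}_E$ is bounded, hence by the first implication $T\colon E\to\mathcal{S}_E$ is bounded, and if $T\colon E\to F$ is bounded for some symmetric quasi-Banach $F$, then by the second implication $S\colon E\to F$ is bounded, whence $\mathcal{S}_E\hookrightarrow F$ by optimality of $\mathcal{S}_E$ for $S$. This is precisely the assertion that $\mathcal{S}_E$ is the optimal symmetric quasi-Banach range for $T$ on $E$.

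I do not expect any serious obstacle here; the proof is genuinely a combination of already-proved facts, as the paper announces. The only point requiring a little care is the manipulation of dilation operators in the ``$T$ bounded $\Rightarrow$ $S$ bounded'' direction: one must use that each $\sigma_s$ is bounded on any quasi-Banach symmetric space, that $\sigma_s\sigma_{1/s}$ acts as the identity on functions supported on a subinterval pushed forward correctly, and — most importantly — that $\mu(Sx)=S\mu(x)$ so the comparison made on rearrangements transfers to the operator norm. A second minor point is to make sure the function $f$ produced by Theorem~\ref{main thm} actually lies in $E$, which follows from $E\subset L_1$ together with $|f|\le 3\sigma_4\mu(x)$ and property (b). With these checks in place the argument is a few lines.
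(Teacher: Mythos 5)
Your argument is correct and is exactly the combination the paper intends (the paper gives no separate proof of Corollary~\ref{opt-gen}, stating only that it follows from Theorem~\ref{main thm}, Theorem~\ref{optimal01} and estimate \eqref{above estimate}, which is precisely what you carry out). One small inaccuracy: $Sx$ and $S\mu(x)$ need not be equimeasurable in general; what holds, and what your argument actually needs, is the one-sided estimate $\mu(Sx)\le S\mu(x)$ (equivalently $|Sx|\le S|x|$ together with $\mu(Sy)\le S\mu(y)$ for $y\ge0$), which suffices to pass from the comparison on rearrangements to boundedness of $S$.
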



\subsection{Optimal symmetric Banach range for the
martingale transforms in Lorentz spaces}
\label{L}
 Here, we apply the results obtained in the preceding sections to present  a description of the optimal symmetric Banach range of the
martingale transforms on Lorentz function spaces on $[0,1]$.


Let $\phi : [0,1) \to [0,1 )$ (respectively, $\phi : [0,\infty ) \to [0,\infty  )$) be  an increasing concave function such that
$\lim_{t\to 0+} \phi(t) =0$ (or briefly $\phi(+0) =0$).  The Lorentz space $\Lambda_\phi$ (respectively, $\Lambda_\phi(0,\infty  ) $ is defined by setting
$$
\Lambda _{\phi}:  = \left\{x\in S(0,1 ) : \norm{x}_{\Lambda_\phi}:=  \int_0^1 \mu(s;x)\, d\phi(s) <\infty \right\}$$
(respectively,
$$
\Lambda _{\phi} (0,\infty  )  = \left\{x\in S(0,\infty  ) : \norm{x}_{\Lambda_\phi}:=  \int_0^\infty \mu(s;x)\, d\phi(s) <\infty \right\}\,).$$


In \cite{STZ2},  the optimal Banach symmetric range of the Calder\'{o}n operator $S_\infty$ on Lorentz spaces  $\Lambda_\phi(0,\infty)$ was determined.
 Let us state their main result.
\begin{theorem}\cite[Theorem 11]{STZ2}\label{thstz2}
Let $\phi:[0,\infty ) \to [0,\infty )$ be an increasing concave function such that $\phi(0+) =0$. Suppose the function
$$
\psi(u):=\inf_{w>1} \frac{\phi(uw)}{1+\log(w)}$$
satisfies $\lim_{t\to \infty }\frac{\psi(t)}{t}=0$.
Then, the conditions $\Lambda_\phi(0,\infty ) \subset \Lambda _{\log }(0,\infty)$ and
$$\int_0^ u \frac{\psi(t)}{t}dt +u\int_u^\infty \frac{\psi(t)}{t^2}dt \le c_{\phi, \psi}\phi(u), ~u >0,$$ imply the following:
\begin{enumerate}[{\rm (i)}]
\item The Calder\'{o}n operator $S_\infty :\Lambda_{\phi}(0,\infty )\to\Lambda_{\psi}(0,\infty ) $ is bounded;
\item for every $x\in\Lambda_{\psi}(0,\infty ),$ there exists $y\in\Lambda_{\phi}(0,\infty )$ such that $\mu(x)\leq S_\infty \mu(y)$ and $\left\|y\right\|_{\Lambda_{\phi}(0,\infty )}\leq 8\left\|x\right\|_{\Lambda_{\psi}(0,\infty ) }.$
\end{enumerate}
\end{theorem}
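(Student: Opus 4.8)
The plan is to prove assertions (i) and (ii) separately. The main tool is the elementary identity
$$(S_\infty g)(t)=\int_0^\infty\frac{g(s)}{\max(t,s)}\,ds,$$
valid for every $g$ for which $S_\infty$ is defined. It yields two facts about $g\ge 0$: first, $S_\infty g$ is decreasing, since $\frac{d}{dt}(S_\infty g)(t)=-t^{-2}\int_0^t g(s)\,ds\le 0$ (the two contributions $g(t)/t$ cancel), so $\mu(S_\infty g)=S_\infty g$; second, as $s\mapsto 1/\max(t,s)$ is non-increasing, the Hardy--Littlewood inequality gives $(S_\infty g)(t)\le\int_0^\infty\frac{\mu(g)(s)}{\max(t,s)}\,ds=(S_\infty\mu(g))(t)$ for every $t$. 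I would also note that $\psi$, being an infimum of the concave increasing functions $u\mapsto\phi(uw)/(1+\log w)$, $w>1$, is itself concave and increasing with $\psi(0+)=0$ (since $\psi\le\phi$); hence $\Lambda_\psi(0,\infty)$ is a genuine Lorentz space, and I would argue in the non-degenerate case $\psi\not\equiv 0$.

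For (i): since $|(S_\infty x)(t)|\le\int_0^\infty|x(s)|/\max(t,s)\,ds=(S_\infty|x|)(t)\le(S_\infty\mu(x))(t)$ (the last step by the Hardy--Littlewood fact applied to $|x|$), monotonicity of the symmetric space $\Lambda_\psi$ gives $\|S_\infty x\|_{\Lambda_\psi}\le\|S_\infty\mu(x)\|_{\Lambda_\psi}$. Writing the decreasing function $\mu(x)$ as a Stieltjes integral $\mu(x)=\int_0^\infty\chi_{(0,u)}\,d(-\mu(x))(u)$, applying the positive operator $S_\infty$ termwise, and using the integral triangle inequality in the Banach space $\Lambda_\psi$,
$$\|S_\infty\mu(x)\|_{\Lambda_\psi}\le\int_0^\infty\|S_\infty\chi_{(0,u)}\|_{\Lambda_\psi}\,d(-\mu(x))(u).$$
Here $S_\infty\chi_{(0,u)}$ equals $1+\log(u/t)$ on $(0,u)$ and $u/t$ on $(u,\infty)$, so it is decreasing and $\|S_\infty\chi_{(0,u)}\|_{\Lambda_\psi}=\int_0^u(1+\log(u/t))\,d\psi(t)+u\int_u^\infty t^{-1}\,d\psi(t)$; a Fubini computation (legitimate by positivity, using $\psi(0+)=0$) rewrites this as $\int_0^u\psi(t)t^{-1}\,dt+u\int_u^\infty\psi(t)t^{-2}\,dt$, which is $\le c_{\phi,\psi}\phi(u)$ by the standing hypothesis. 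Substituting and using Fubini once more, $\|S_\infty\mu(x)\|_{\Lambda_\psi}\le c_{\phi,\psi}\int_0^\infty\phi(u)\,d(-\mu(x))(u)=c_{\phi,\psi}\int_0^\infty\mu(x)\,d\phi=c_{\phi,\psi}\|x\|_{\Lambda_\phi}$, proving (i).

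For (ii): by symmetry assume $x=\mu(x)$, and put $a_k:=m(\{x>2^k\})$, $k\in\mathbb{Z}$. Then $x\le\sum_k 2^k\chi_{(0,a_k)}$ pointwise, and estimating $\int_0^\infty\mu(x)\,d\psi=\int_0^\infty\psi(m(\{x>\lambda\}))\,d\lambda$ over dyadic layers gives $\sum_k 2^k\psi(a_k)\le 2\|x\|_{\Lambda_\psi}$. For each $k$ with $a_k>0$, since $\psi(a_k)=\inf_{w>1}\phi(a_kw)/(1+\log w)>0$, choose $w_k>1$ with $\phi(a_kw_k)\le 2(1+\log w_k)\psi(a_k)$ and set $y_k:=\frac{2^k}{1+\log w_k}\chi_{(0,a_kw_k)}$. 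For $t<a_k$ one has $(S_\infty\chi_{(0,a_kw_k)})(t)=1+\log(a_kw_k/t)\ge 1+\log w_k$, whence $S_\infty y_k\ge 2^k\chi_{(0,a_k)}$; moreover $\|y_k\|_{\Lambda_\phi}=\frac{2^k}{1+\log w_k}\phi(a_kw_k)\le 2\cdot 2^k\psi(a_k)$. Then $g:=\sum_k y_k$ lies in $\Lambda_\phi$ with $\|g\|_{\Lambda_\phi}\le\sum_k\|y_k\|_{\Lambda_\phi}\le 4\|x\|_{\Lambda_\psi}$, and termwise $S_\infty g=\sum_k S_\infty y_k\ge\sum_k 2^k\chi_{(0,a_k)}\ge x$. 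Finally put $y:=\mu(g)\in\Lambda_\phi$; then $\|y\|_{\Lambda_\phi}=\|g\|_{\Lambda_\phi}\le 8\|x\|_{\Lambda_\psi}$, and by the Hardy--Littlewood fact $S_\infty\mu(y)=S_\infty\mu(g)\ge S_\infty g\ge x=\mu(x)$, which is (ii).

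The creative part is the construction in (ii): one has to coordinate the dyadic decomposition with the near-minimizers $w_k$ of $w\mapsto\phi(a_kw)/(1+\log w)$ so that a single function $y$ does the job with the stated constant, and it is precisely the representation $(S_\infty g)(t)=\int g(s)/\max(t,s)\,ds$, with its non-increasing kernel, that upgrades the pointwise bound $S_\infty g\ge x$ to the rearranged inequality $S_\infty\mu(y)\ge\mu(x)$. The remaining points are routine: the convergence in the Fubini steps of (i) is automatic from positivity together with $\psi(0+)=0$; the hypotheses $\Lambda_\phi\subset\Lambda_{\log}$ and $\lim_{t\to\infty}\psi(t)/t=0$ ensure that $S_\infty$ is defined on $\Lambda_\phi$ and that $\Lambda_\psi\subset L_{1,\infty}+L_\infty$, so that the statements are non-vacuous; and the degenerate configurations ($\psi\equiv 0$, or $\{x>\lambda\}$ of infinite measure for some $\lambda>0$) are disposed of by short separate arguments.
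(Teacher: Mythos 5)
The paper never proves this statement---it is imported verbatim from \cite[Theorem 11]{STZ2}---so your argument can only be judged on its own terms. Your proof of (i) and of the main case of (ii) is correct and is the natural one: in (i), reducing to $\left\|S_\infty\chi_{(0,u)}\right\|_{\Lambda_\psi}$ via the layer representation of $\mu(x)$ and checking, by integration by parts (where the boundary terms vanish precisely because $\psi(0+)=0$, $\lim_{t\to\infty}\psi(t)/t=0$ and $\int_0^u\psi(t)t^{-1}\,dt<\infty$), that this norm equals $\int_0^u\psi(t)t^{-1}\,dt+u\int_u^\infty\psi(t)t^{-2}\,dt$, is exactly what the integral hypothesis is designed for; in (ii), the dyadic layers $2^k\chi_{(0,a_k)}$ coordinated with near-minimizers $w_k$ of $w\mapsto\phi(a_kw)/(1+\log w)$ and the bound $S_\infty\chi_{(0,a_kw_k)}\geq(1+\log w_k)\chi_{(0,a_k)}$ give the constant $4\leq 8$. (A small simplification: your $g$ is a positive combination of functions $\chi_{(0,b)}$, hence already decreasing, so $y=\mu(g)=g$ and the final Hardy--Littlewood step is not needed.)

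The gap is your closing claim that the case where $m(\{x>\lambda\})=\infty$ for some $\lambda>0$ is ``disposed of by short separate arguments.'' It cannot be: in that case the conclusion of (ii) may genuinely fail under the stated hypotheses. Take $\phi(u)=\sqrt{u}$ for $u\leq u_0$ and $\phi(u)=1+\log u$ for $u\geq u_0$, where $\sqrt{u_0}=1+\log u_0$ (so $u_0\approx 12.3$ and $\phi$ is increasing and concave with $\phi(0+)=0$). Then $\Lambda_\phi(0,\infty)\subset\Lambda_{\log}(0,\infty)$, one checks $\psi(t)\leq\min(\sqrt t,1)$ with $\psi\equiv 1$ on $[u_0,\infty)$, the hypothesis $\lim_{t\to\infty}\psi(t)/t=0$ holds, and the integral inequality holds with a modest constant; yet $\chi_{(0,\infty)}\in\Lambda_\psi(0,\infty)$ with norm $\psi(\infty)=1$, while for every $y\in\Lambda_\phi\subset\Lambda_{\log}$ one has $(S_\infty\mu(y))(t)\to 0$ as $t\to\infty$ (the tail $\int_t^\infty\mu(y)(s)s^{-1}\,ds$ tends to $0$ and $t^{-1}\int_0^t\mu(y)\to\mu(\infty;y)=0$ by the Ces\`aro argument), so no $y$ with $\mu(\chi_{(0,\infty)})\leq S_\infty\mu(y)$ exists. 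Thus the case you set aside is either vacuous---when $\psi(\infty)=\infty$, which forces every $a_k$ to be finite and makes your main argument complete---or a counterexample to the literal statement; it is not removable by a short argument, and $\psi(\infty)=\infty$ does not follow from the stated hypotheses. In fairness, the only use of the theorem in the present paper (the proof of Theorem \ref{main th}) applies part (ii) to $x$ supported on $(0,1)$, where $a_k\leq 1$ for all $k$ and your construction works verbatim; but as written you should either derive $a_k<\infty$ from an added assumption such as $\psi(\infty)=\infty$, or restrict (ii) to $x$ with $\mu(\infty;x)=0$.
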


 We apply Theorem \ref{thstz2} to obtain a similar result for Lorentz function spaces on $[0,1]$, i.e., we determine  the optimal range of the Calder\'{o}n operator $S$ on a Lorentz space $\Lambda_{\phi}$ as some  Lorentz space $\Lambda_{\psi}$.

Let $\phi:[0,1)\to[0,1)$ be an increasing concave function such that $\phi(0+)=0.$ We set
\begin{equation}\label{psi function}\psi(u):=\inf_{1< w< \frac1u }\frac{\phi(uw)}{1+\log(w)},~u\in [ 0,1).
\end{equation}
\begin{thm}\label{main th} Let $\phi:[0,1)\to[0,1)$ be an increasing concave function such that $\phi(0+)=0$ and let $\psi$ be the function defined by the formula \eqref{psi function}.
  If the inequality
\begin{equation}\label{crit}
\int_0^u\frac{\psi(t)}{t}\,dt+u\int_u^{1}\frac{\psi(t)}{t^2}\,dt\leq c_{\phi,\psi}\phi(u),\quad u\in (0,1),
\end{equation} holds for some constant $c_{\phi,\psi}$, then:
\begin{enumerate}[{\rm (i)}]
\item The Calder\'{o}n operator $S:\Lambda_{\phi}\to\Lambda_{\psi}$ is bounded;
\item for every $x\in\Lambda_{\psi},$ there exists $y\in\Lambda_{\phi}$ such that $\mu(x)\leq S\mu(y)$ and $\left\|y\right\|_{\Lambda_{\phi}}\leq 8 \left\|x\right\|_{\Lambda_{\psi}}.$
\end{enumerate}
\end{thm}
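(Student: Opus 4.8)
The plan is to deduce Theorem~\ref{main th} from Theorem~\ref{thstz2} by a change-of-measure/extension argument that transplants the problem on $(0,1)$ to the semiaxis. First I would extend the concave function $\phi:[0,1)\to[0,1)$ to a concave increasing function $\tilde\phi:[0,\infty)\to[0,\infty)$ with $\tilde\phi(0+)=0$ and $\tilde\phi|_{[0,1)}=\phi$; the natural choice is to continue $\phi$ linearly beyond $1$ with slope equal to the right derivative $\phi'(1-)$, so concavity is preserved. One then computes the associated function $\tilde\psi(u)=\inf_{w>1}\tilde\phi(uw)/(1+\log w)$ from Theorem~\ref{thstz2} and checks two things: that for $u\in(0,1)$ the infimum defining $\tilde\psi(u)$ is essentially attained on the range $1<w<1/u$, so that $\tilde\psi$ agrees (up to a harmless constant) with the function $\psi$ defined in \eqref{psi function}; and that the hypothesis \eqref{crit} for $\phi,\psi$ on $(0,1)$ implies (again up to constants) the corresponding semiaxis inequality of Theorem~\ref{thstz2} for $\tilde\phi,\tilde\psi$, together with the embedding $\Lambda_{\tilde\phi}(0,\infty)\subset\Lambda_{\log}(0,\infty)$ and the growth condition $\lim_{t\to\infty}\tilde\psi(t)/t=0$. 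The latter conditions should follow automatically because the linear extension of $\phi$ grows only linearly, while the logarithmic denominator forces $\tilde\psi(t)/t\to0$; and the tail integrals appearing in the $(0,\infty)$-inequality over $(1,\infty)$ are controlled by $\int_1^\infty \tilde\psi(t)t^{-2}\,dt<\infty$, which is comparable to $\phi'(1-)$.

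Next I would invoke Theorem~\ref{thstz2} for the pair $\tilde\phi,\tilde\psi$ on $(0,\infty)$ to get: (i$'$) $S_\infty:\Lambda_{\tilde\phi}(0,\infty)\to\Lambda_{\tilde\psi}(0,\infty)$ is bounded, and (ii$'$) every $x\in\Lambda_{\tilde\psi}(0,\infty)$ satisfies $\mu(x)\le S_\infty\mu(y)$ for some $y\in\Lambda_{\tilde\phi}(0,\infty)$ with $\|y\|_{\Lambda_{\tilde\phi}(0,\infty)}\le 8\|x\|_{\Lambda_{\tilde\psi}(0,\infty)}$. To pull (i) back to $(0,1)$: given $x\in\Lambda_\phi$, extend it by zero to $(0,\infty)$; then $\|x\|_{\Lambda_{\tilde\phi}(0,\infty)}=\|x\|_{\Lambda_\phi}$ since $\mu(x)$ is supported in $(0,1)$ and $\tilde\phi|_{[0,1)}=\phi$. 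Now $(Sx)(t)$ for $t\in(0,1)$ differs from $(S_\infty x)(t)$ only in that the outer integral in $S$ stops at $1$ rather than $\infty$; but $x$ vanishes on $(1,\infty)$, so in fact $(Sx)(t)=(S_\infty x)(t)$ for all $t\in(0,1)$. Hence $\mu(Sx)\le\mu(S_\infty x)\cdot\chi_{(0,1)}$ restricted to $(0,1)$, and $\|Sx\|_{\Lambda_\psi}\lesssim\|S_\infty x\|_{\Lambda_{\tilde\psi}(0,\infty)}\lesssim\|x\|_{\Lambda_{\tilde\phi}(0,\infty)}=\|x\|_{\Lambda_\phi}$, using the comparability $\psi\asymp\tilde\psi$ on $(0,1)$ noted above. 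This gives (i).

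For (ii), given $x\in\Lambda_\psi$, extend by zero to $(0,\infty)$; since $\psi\asymp\tilde\psi$ on $(0,1)$ and $x$ is supported there, $x\in\Lambda_{\tilde\psi}(0,\infty)$ with comparable norm. Apply (ii$'$) to obtain $y\in\Lambda_{\tilde\phi}(0,\infty)$, WLOG $y=\mu(y)$, with $\mu(x)\le S_\infty\mu(y)$ and good norm control. Then I would repeat the compression trick already used in the proof of Theorem~\ref{optimal01}, namely replace $y$ by
$$
z(t):=\Bigl(y(t)+\int_1^\infty y(s)\,\frac{ds}{s}\Bigr)\chi_{(0,1)}(t),
$$
which satisfies $\mu(x)(t)\le(S_\infty y)(t)\le(Sz)(t)$ for $t\in(0,1)$ and $\|z\|_{\Lambda_\phi}\le\|y\chi_{(0,1)}\|_{\Lambda_\phi}+\int_1^\infty y(s)s^{-1}\,ds\le\|y\|_{\Lambda_{\tilde\phi}(0,\infty)}$ (the last step needs $\int_1^\infty y(s)s^{-1}\,ds\lesssim\|y\|_{\Lambda_{\tilde\phi}(0,\infty)}$, which holds because the linear tail of $\tilde\phi$ gives $d\tilde\phi(s)\asymp ds$ on $(1,\infty)$, so $\|y\|_{\Lambda_{\tilde\phi}(0,\infty)}\gtrsim\int_1^\infty\mu(s;y)\,ds\ge\int_1^\infty y(s)s^{-1}\,ds$). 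Tracking constants carefully should land the factor $8$ as stated, possibly after absorbing the comparability constants $\psi\asymp\tilde\psi$ into the choice of extension (choosing $\tilde\phi$ to be exactly linear past $1$ makes these constants equal to $1$ in the relevant regime, which is why the clean constant $8$ survives).

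\emph{Main obstacle.} The delicate point is the equivalence $\psi\asymp\tilde\psi$ on $(0,1)$ — more precisely, showing that truncating the infimum in \eqref{psi function} to $1<w<1/u$ (as opposed to all $w>1$) changes nothing essential, and that the finite-interval criterion \eqref{crit} is genuinely equivalent to the semiaxis criterion of Theorem~\ref{thstz2} for the extended function. One must verify that large values $w\ge 1/u$ cannot make $\tilde\phi(uw)/(1+\log w)$ smaller (here the linear growth of $\tilde\phi$ beyond $1$ and the growth of $1+\log w$ are what save the day), and that the extra contribution to the tail integral $u\int_1^\infty\tilde\psi(t)t^{-2}\,dt$ is dominated by $\phi(u)$ for $u\in(0,1)$, which again reduces to finiteness of $\int_1^\infty\tilde\psi(t)t^{-2}\,dt$ and the trivial bound $\phi(u)\ge\phi(u_0)u/u_0$ near $0$ by concavity. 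None of these steps is deep, but they require care to keep the constant $8$ clean; everything else is a direct transplantation of Theorem~\ref{thstz2}.
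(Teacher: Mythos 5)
Your overall strategy --- extend $\phi$ to the semiaxis, apply Theorem~\ref{thstz2}, and compress back via $z(t)=\bigl(y(t)+\int_1^\infty y(s)\,\frac{ds}{s}\bigr)\chi_{(0,1)}(t)$ --- is exactly the paper's, and the $(0,1)$-side manipulations (extension by zero, $(Sx)(t)=(S_\infty x)(t)$ for $t\in(0,1)$, the compression estimate) are correct. The gap is in your choice of extension. If you continue $\phi$ \emph{linearly} past $1$ with slope $\phi'(1-)$, then for generic $\phi$ one has $\phi'(1-)>0$, so $\tilde\phi(t)\asymp \phi'(1-)\,t$ at infinity. Since the numerator $\tilde\phi(uw)$ then grows linearly in $w$ while the denominator $1+\log w$ grows only logarithmically, the infimum defining $\tilde\psi(u)$ is essentially attained at $w\to 1^+$, giving $\tilde\psi(t)\asymp\phi'(1-)\,t$ for large $t$. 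Consequently $\tilde\psi(t)/t\to\phi'(1-)\neq 0$, so the hypothesis $\lim_{t\to\infty}\psi(t)/t=0$ of Theorem~\ref{thstz2} fails; and $\int_1^\infty\tilde\psi(t)t^{-2}\,dt\asymp\phi'(1-)\int_1^\infty t^{-1}\,dt=\infty$, so the semiaxis criterion cannot hold either. Both of your claims --- that ``the logarithmic denominator forces $\tilde\psi(t)/t\to 0$'' and that $\int_1^\infty\tilde\psi(t)t^{-2}\,dt$ is ``comparable to $\phi'(1-)$'' and finite --- are therefore false for the linear extension, and the transplantation to $(0,\infty)$ breaks down at the very first step.

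The repair is to normalize $\phi(1)=1$ and extend \emph{logarithmically}: $\tilde\phi(t)=1+\log t$ for $t\ge 1$. Then a direct computation gives $\tilde\psi\equiv 1$ on $[1,\infty)$ and $\tilde\psi(u)=\min\{\psi(u),1\}=\psi(u)$ for $u\in(0,1)$ (using that $\psi$ is increasing with $\psi(1^-)\le 1$) --- an exact identity, not merely a two-sided comparison --- so the semiaxis criterion follows from \eqref{crit} with constant $c_{\phi,\psi}+1$, since the added tail contributions are $u\int_1^\infty t^{-2}\,dt=u\le\tilde\phi(u)$ and $\int_1^u t^{-1}\,dt+u\int_u^\infty t^{-2}\,dt=\log u+1=\tilde\phi(u)$ for $u>1$. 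Moreover $d\tilde\phi(s)=ds/s$ on $(1,\infty)$ turns your final tail estimate into the exact identity $\|y\chi_{(0,1)}\|_{\Lambda_\phi}+\int_1^\infty y(s)\,\frac{ds}{s}=\|y\|_{\Lambda_{\tilde\phi}(0,\infty)}$, which is precisely what preserves the constant $8$; with the linear extension you would only get $\int_1^\infty y(s)\,\frac{ds}{s}\le(\phi'(1-))^{-1}\|y\|_{\Lambda_{\tilde\phi}(0,\infty)}$, and the factor $(\phi'(1-))^{-1}$ can be arbitrarily large.
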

\begin{proof}Without loss of generality, we may assume that $\phi(1)=1$.
We define the functions $\tilde{\phi}$ and $\tilde{\psi}$ on $(0,\infty)$ by setting
$$
\tilde{\phi}(t):= \begin{cases}\phi(t), ~t\in (0,1)\\
1+  \log(t),~t\ge 1
\end{cases}$$
and
$$\tilde{\psi}(u):=\inf_{w>1}\frac{\phi(uw)}{1+\log(w)}. $$
If $u\ge 1$, then we have
\begin{align}\label{barpsiu}
\tilde{\psi}(u) =\inf_{w>1}\frac{\phi(uw)}{1+\log(w)} = \inf_{w>1} \frac{1+\log(u)+\log(w)}{1+\log(w)} =1 .
\end{align}
Moreover, in the case when $0< u< 1$
\begin{align*}
\tilde{\psi}(u) =\inf_{w>1}\frac{\phi(uw)}{1+\log(w)} &=\min\left\{
\inf_{1<w<\frac1u }\frac{\phi(uw)}{1+\log(w)}, ~\inf_{w\ge \frac1u}\frac{\phi(uw)}{1+\log(w)} \right\} \\
&=\min\left\{
\inf_{1<w<\frac1u }\frac{\phi(uw)}{1+\log(w)} , ~ \inf_{w\ge \frac1u}\frac{1+\log(uw)}{1+\log(w)} \right\}\\
&=\min\left\{
\inf_{1<w<\frac1u }\frac{\phi(uw)}{1+\log(w)} , ~ 1 \right\}\\
&=\min\left\{
\psi(u),~ 1 \right\}.
\end{align*}
One can easily verify that $\psi$ is increasing on $(0,1)$ (see also  the proof of Lemma 5 in \cite{STZ2}). Hence, $\psi(u)\le \psi(1)=1 $ whenever $0< u< 1$. Thus,
$$\tilde{\psi}(u)= \psi(u)\;\;\mbox{for all}\;\;0< u< 1. $$

 Next, if $0<u\le 1$, we have
 \begin{align*}
 \int_0^ u \frac{\tilde{\psi}(t)}{t}dt +u\int_u^\infty \frac{\tilde{\psi}(t)}{t^2}dt
 &~=~
 \int_0^ u \frac{\tilde{\psi}(t)}{t}dt +u\int_u^1 \frac{\tilde{\psi}(t)}{t^2}dt + u \int_1^\infty  \frac{\tilde{\psi}(t)}{t^2}dt\\
  &
 \stackrel{\eqref{crit}}{\le} c_{\phi,\psi}\phi(u)+ u \int_1^\infty  \frac{\tilde{\psi}(t)}{t^2}dt\\
&  \stackrel{\eqref{barpsiu}}{\le} c_{\phi,\psi}\phi(u)+ u \int_1^\infty  \frac{1}{t^2}dt\\
&~= ~ c_{\phi,\psi}\phi(u)+ u \\
&~\le~ (c_{\phi,\psi}+1 )\tilde{\phi}(u),
\end{align*}
 and in the case $u > 1$
 \begin{align*}
 \int_0^ u \frac{\tilde{\psi}(t)}{t}dt +u\int_u^\infty \frac{\tilde{\psi}(t)}{t^2}dt
 &~=
 \int_0^ 1 \frac{\tilde{\psi}(t)}{t}dt +\int_1^ u \frac{\tilde{\psi}(t)}{t}dt + u \int_u^\infty   \frac{\tilde{\psi}(t)}{t^2}dt\\
& \stackrel{\eqref{crit}}{\le}  c_{\phi,\psi}\phi(1 ) +\int_1^ u \frac{\tilde{\psi}(t)}{t}dt + u \int_u^\infty   \frac{\tilde{\psi}(t)}{t^2}dt   \\
& \stackrel{\eqref{barpsiu}}{\le}  c_{\phi,\psi}\phi(1 ) +\int_1^ u \frac{1}{t}dt + u \int_u^\infty   \frac{1}{t^2}dt   \\
& ~\le ~ c_{\phi,\psi}\phi(1 )+ \log(u) +1  \\
&~\le~ (c_{\phi,\psi}+1 ) \tilde{\phi}(u).
\end{align*}
Summarizing all, we obtain
$$\int_0^ u \frac{\tilde{\psi}(t)}{t}dt +u\int_u^\infty \frac{\tilde{\psi}(t)}{t^2}dt \le (c_{\phi,\psi}+1 )\tilde{\phi}(u), ~u >0.$$
Thus, all the assumptions of Theorem \ref{thstz2} hold for the functions $\tilde{\phi}$ and $\tilde{\psi}$.
Hence, in particular, the Calder\'{o}n operator $S_\infty :\Lambda_{\tilde\phi}(0,\infty )\to\Lambda_{\tilde\psi}(0,\infty ) $ is bounded. Therefore, for any
$z\in \Lambda_\phi\subset \Lambda _{\tilde \phi}(0,\infty )$
(we extend $z$ to a function on $(0,\infty)$ by setting $z=0$ on $(1,\infty)$), we have
$$   \left\| Sz \right\|_{\Lambda_{\psi}}  \leq \left\|S_{\infty}z \right\|_{  \Lambda_{\tilde{\psi}}(0,\infty)}\le C\|z\|_{  \Lambda_{\tilde{\phi}}(0,\infty)}=C\|z\|_{  \Lambda_{\phi}},$$
which implies that $S$ is bounded from $\Lambda_{\phi}$ in $\Lambda_{\psi}$.

To prove (ii), we take $x\in \Lambda_{\psi}\subset \Lambda_{\tilde\psi}(0,\infty )$. By Theorem \ref{thstz2}, there is $y\in \Lambda_{\tilde\phi}(0,\infty )$ such that $\mu(x)\leq S_\infty \mu(y)$ and
\begin{align}\label{ylexpsi}
\left\|y\right\|_{\Lambda_{\tilde\phi}(0,\infty)}
\leq 8\left\|x\right\|_{\Lambda_{\tilde\psi}(0,\infty )}.
\end{align}
Without loss of generality, we may assume that $y=\mu(y).$ Then, if
$$
z(t):=\left(y(t)+\int_1^{\infty}y(s)\,\frac{ds}{s}\right)\chi_{_{(0,1)}}(t),\quad t\in(0,1),$$
we have
$$\mu(t;x)\leq (S_\infty y)(t) \leq (Sz)(t),\quad t\in(0,1).$$
On the other hand,
\begin{align*}
\left\|z\right\|_{\Lambda_{\phi}}
&~\leq~ \left\|
y\chi_{_{(0,1)}}\right\|_{\Lambda_{\phi}}
+\int_1^{\infty}y(s)\frac{ds}{s} \\
&~=~  \left\|
y  \right\|_{\Lambda_{\tilde\phi}(0,\infty )}\\
&\stackrel{\eqref{ylexpsi}}{\leq}  8 \left \|x \right\|_{\Lambda_{\tilde \psi}(0,\infty )} \\
&~=~ 8  \left\|x\right\|_{\Lambda_{\psi}}.
 \end{align*}
This completes the proof of the theorem.
\end{proof}

Recall (see Corollary \ref{opt-gen}) that the space $(\cS_{\Lambda_{\phi}}, \left\|\cdot \right \|_{ \cS_{\Lambda_{\phi}}})$ is   the optimal symmetric quasi-Banach range for the martingale transform  $T$ on the space $\Lambda_{\phi}$.
The following result shows that, under the assumptions of Theorem~\ref{main th}, it can be identified as the Lorentz space $\Lambda_{\psi}$ from this theorem.
\begin{corollary}\label{main th2} If the assumptions of Theorem \ref{main th} hold,  then we have $\cS_{\Lambda_{\phi}} =\Lambda_{\psi}.$   Thus, $\Lambda_{\psi}$ is the optimal symmetric (quasi-)Banach range for the martingale transform  $T$ defined on the Lorentz space $\Lambda_{\phi}$.
\end{corollary}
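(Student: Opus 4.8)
The plan is to read the identification $\cS_{\Lambda_\phi}=\Lambda_\psi$ straight off Theorem~\ref{main th} together with Definition~\ref{quasi-banach range}, and then to obtain the optimality statement by quoting Corollary~\ref{opt-gen}. Before anything else I would record that $\cS_{\Lambda_\phi}$ is even well defined, i.e.\ that $\Lambda_\phi\subset L_1$: since $\phi$ is concave with $\phi(0+)=0$, the quotient $\phi(t)/t$ is non-increasing on $(0,1)$, hence bounded below by $\phi(1-)>0$, and the classical Hardy--Littlewood--P\'olya comparison of the measures $dt$ and $d\phi$ on decreasing functions yields the embedding. This puts us inside the hypotheses of Definition~\ref{quasi-banach range}.

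Next I would prove $\Lambda_\psi\subset\cS_{\Lambda_\phi}$ with $\norm{\cdot}_{\cS_{\Lambda_\phi}}\le 8\norm{\cdot}_{\Lambda_\psi}$. Given $x\in\Lambda_\psi$, Theorem~\ref{main th}(ii) supplies $y\in\Lambda_\phi$ with $\mu(x)\le S\mu(y)$ and $\norm{y}_{\Lambda_\phi}\le 8\norm{x}_{\Lambda_\psi}$. To place $x$ inside $\cS_{\Lambda_\phi}$ I still have to check the clause $x\in L_{1,\infty}$ from Definition~\ref{quasi-banach range}; this is automatic because $y\in\Lambda_\phi\subset L_1$, $S\mu(y)=C\mu(y)+C^{\ast}\mu(y)$, and $C\colon L_1\to L_{1,\infty}$ while $C^{\ast}\colon L_1\to L_1\subset L_{1,\infty}$, so $\mu(x)\le S\mu(y)\in L_{1,\infty}$ and the ideal property of $L_{1,\infty}$ gives $x\in L_{1,\infty}$. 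The norm inequality is then read off from the defining infimum $\norm{x}_{\cS_{\Lambda_\phi}}\le\norm{y}_{\Lambda_\phi}$.

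For the reverse inclusion $\cS_{\Lambda_\phi}\subset\Lambda_\psi$ I would use the boundedness half, Theorem~\ref{main th}(i): for $x\in\cS_{\Lambda_\phi}$ and any $y\in\Lambda_\phi$ with $\mu(x)\le S\mu(y)$ and $\norm{y}_{\Lambda_\phi}$ arbitrarily close to $\norm{x}_{\cS_{\Lambda_\phi}}$, boundedness gives $S\mu(y)\in\Lambda_\psi$ with $\norm{S\mu(y)}_{\Lambda_\psi}\le C\norm{y}_{\Lambda_\phi}$, and then $\mu(x)\le S\mu(y)$ together with the ideal property of $\Lambda_\psi$ yields $x\in\Lambda_\psi$ and $\norm{x}_{\Lambda_\psi}\le C\norm{y}_{\Lambda_\phi}$; passing to the infimum over such $y$ gives $\norm{x}_{\Lambda_\psi}\le C\norm{x}_{\cS_{\Lambda_\phi}}$. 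Combining the two inclusions, $\cS_{\Lambda_\phi}=\Lambda_\psi$ with equivalent quasi-norms, and Corollary~\ref{opt-gen} — which states that $\cS_{\Lambda_\phi}$ is the optimal symmetric quasi-Banach range of $T$ on $\Lambda_\phi$ — then immediately delivers the closing sentence of the corollary.

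I do not expect a genuine obstacle here: the analytic content has already been spent in Theorem~\ref{main th} (hence in Theorem~\ref{main thm} and \cite{STZ2}), and what remains is bookkeeping around the definition of $\cS_E$. The only two points that deserve a line of care are the well-definedness remark $\Lambda_\phi\subset L_1$, and the verification that the elements produced by Theorem~\ref{main th}(ii) genuinely satisfy the membership condition $x\in L_{1,\infty}$ built into Definition~\ref{quasi-banach range}; both are handled as above.
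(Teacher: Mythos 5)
Your proposal is correct and follows essentially the same route as the paper: the inclusion $\cS_{\Lambda_\phi}\subset\Lambda_\psi$ from Theorem~\ref{main th}(i) together with the minimality of $\cS_{\Lambda_\phi}$ (which you verify directly rather than citing Theorem~\ref{optimal01}(ii)), the reverse inclusion from Theorem~\ref{main th}(ii), and optimality from Corollary~\ref{opt-gen}. The extra checks you add ($\Lambda_\phi\subset L_1$ and $x\in L_{1,\infty}$) are correct bookkeeping that the paper leaves implicit.
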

\begin{proof}
First, by Theorem \ref{main th} (i), $S:\,\Lambda_{\phi}\to\Lambda_{\psi}$ is a bounded operator.
Hence, it follows from Theorem \ref{optimal01}  that
$\cS_{\Lambda_{\phi}}\subset\Lambda_{\psi}.$

To prove the converse inclusion, we assume that  $x\in \Lambda_{\psi}$. Then, by Theorem \ref{main th} (ii) there exists $y\in\Lambda_{\phi}$ such that $\mu(x)\leq S\mu(y).$ Hence, from the definition of the space $\cS_{\Lambda_{\phi}}$ it follows $x\in  \cS_{\Lambda_{\phi}}$, and we conclude that  $\Lambda_{\psi}\subset \cS_{\Lambda_{\phi}}.$
\end{proof}

\subsection{Proof of Corollary \ref{maincor}}\label{3}
\label{A1}


 \begin{proof}[Proof of Corollary  \ref{maincor}]

Let $E \subset L_1 $  and $F$ be quasi-Banach symmetric function spaces on $(0,1)$. From the estimates obtained in Theorem \ref{main thm} it follows that assertions $(1)$ and $(3)$ are  equivalent. Moreover, it is well known that the Hilbert transform
$H$ is bounded from $E$ in $F$ if and only if so is $S:E\to F$
(see e.g. \cite[Theorems 3.6.8 and 3.6.10]{BSh} and   the classical result  \cite[Theorem 2.1]{B}).
Therefore, we obtain that $(1)\Longleftrightarrow (2)\Longleftrightarrow (3)$. It remains to prove implications $(3)\Rightarrow (4)$ and $(4)\Rightarrow (1)$ whenever $E$ is separable.

$(3)\Longrightarrow (4)$. Let $A\subset \mathbb{N}$.
 The operator $P_A$ is a martingale transform with respect to the Haar filtration and so, by \cite{Bur} (see also \cite[Theorem~3.3.7]{KS}, \cite[II p.156]{LT} or \cite{Pisier}), $P_A$ can be extended to a bounded linear operator from $ L_1$ into $ L_{1,\infty}$ with norm which does not depend on the set $A$. Therefore (see \eqref{above estimate} and subsequent references), we have
$$\mu(P_Ax)\leq c_{{\rm abs}}S\mu(x),\quad x\in L_1,\quad A\subset\mathbb{N}.$$
Hence,
$$\left\| P_Ax \right \|_{F} \leq c_{\rm abs}\left\|S\mu(x)\right\|_{F}\le c_{\rm abs}\left\|S\right\|_{E\to F} \left\|x\right\|_{E},\quad x\in E,\quad A\subset\mathbb{N}.$$

Finally, observe that implication $(4)\Longrightarrow (1)$ follows from the fact that
$$T=P_A,\quad A=\{1\}\bigcup\Big(\bigcup_{n\geq1}\{2^{2n-1}+1,\cdots,2^{2n}\}\Big).$$
\end{proof}

Recall that any Lorentz space $\Lambda_\phi$ on $[0,1]$, with $\phi(+0)=0$, is separable (see e.g. \cite[Lemma~II.5.1]{KPS}). Thus, the next result follows immediately from Corollary  \ref{maincor}. It complements results in  \cite{STZ2} (see also the motivation provided in \cite[section 4]{B}).

\begin{corollary}
Let the assumptions of Theorem \ref{main th} hold. The following statements are equivalent:
\begin{enumerate}
\item The martingale transform $T$ is bounded from $\Lambda_\phi$ into $\Lambda_\psi$.
\item The Hilbert transform  $H$ is bounded from $\Lambda_\phi$ into $\Lambda_\psi$.
\item The Calder\'{o}n operator $S$ is bounded from $\Lambda_\phi$ into $\Lambda_\psi$.
\item Every Haar basis projection is bounded from $\Lambda_\phi$ into $\Lambda_\psi$.
\end{enumerate}
\end{corollary}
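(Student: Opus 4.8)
The plan is to read this off from Corollary \ref{maincor} by specializing to $E=\Lambda_\phi$ and $F=\Lambda_\psi$. First I would verify the hypotheses of Corollary \ref{maincor}. For an increasing concave function $\phi$ with $\phi(+0)=0$ the functional $x\mapsto\int_0^1\mu(s;x)\,d\phi(s)$ is a symmetric Banach norm, so $\Lambda_\phi$ and $\Lambda_\psi$ are Lorentz spaces on $(0,1)$, hence in particular (quasi-)Banach symmetric function spaces on $(0,1)$; being a symmetric Banach space on a probability space, $\Lambda_\phi$ is automatically contained in $L_1$ (if an explicit constant is wanted, concavity of $\phi$ makes $t\mapsto\phi(t)/t$ non-increasing, whence $\phi(t)\ge\phi(1-)\,t$ and $\|x\|_{L_1}\le\phi(1-)^{-1}\|x\|_{\Lambda_\phi}$, with $\phi(1-)>0$ since otherwise $\phi\equiv 0$); and $\Lambda_\phi$ is separable because $\phi(+0)=0$, by \cite[Lemma~II.5.1]{KPS}.

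With these checks in place, Corollary \ref{maincor} yields at once the equivalences $(1)\Leftrightarrow(2)\Leftrightarrow(3)$. It remains to tie in condition $(4)$ of the present statement, which is formally weaker than condition $(4)$ of Corollary \ref{maincor}, since here we ask only that each individual Haar basis projection be bounded, not that their norms be uniformly bounded. I would close the loop through $(3)\Rightarrow(4)\Rightarrow(1)$. For $(3)\Rightarrow(4)$: every basis projection $P_A$ is a Haar martingale transform, so by \eqref{above estimate} one has $\mu(P_Ax)\le c_{{\rm abs}}S\mu(x)$ for all $x$ and all $A$; hence boundedness of $S:\Lambda_\phi\to\Lambda_\psi$ forces $\|P_A\|_{\Lambda_\phi\to\Lambda_\psi}\le c_{{\rm abs}}\|S\|_{\Lambda_\phi\to\Lambda_\psi}$, in particular each $P_A$ is bounded. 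For $(4)\Rightarrow(1)$: exactly as in the proof of Corollary \ref{maincor}, the transform $T$ is itself a basis projection, namely $T=P_A$ with $A=\{1\}\cup\bigcup_{n\ge1}\{2^{2n-1}+1,\dots,2^{2n}\}$, so boundedness of this one projection is boundedness of $T$.

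Since the argument is a pure application, I do not anticipate a genuine obstacle; the only matters requiring a little attention are the two structural checks above and the mismatch between the two versions of statement $(4)$, which is why the cycle has to pass through $(3)\Rightarrow(4)\Rightarrow(1)$ rather than invoke condition $(4)$ of Corollary \ref{maincor} verbatim. It is also worth recording that, under the hypotheses of Theorem \ref{main th}, statement $(3)$ in fact holds by Theorem \ref{main th}(i) (equivalently, $\mathcal{S}_{\Lambda_\phi}=\Lambda_\psi$ by Corollary \ref{main th2}), so all four conditions are actually true; the substantive content retained here is the qualitative equivalence, which remains valid for an arbitrary separable Lorentz space $\Lambda_\phi$ and an arbitrary Lorentz space $\Lambda_\psi$.
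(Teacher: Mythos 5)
Your proposal is correct and follows the paper's own route: the paper likewise observes that $\Lambda_\phi$ is separable (by \cite[Lemma~II.5.1]{KPS}) and then derives the statement directly from Corollary \ref{maincor}. Your extra care about the formal mismatch between condition (4) here and the uniform condition (4) of Corollary \ref{maincor} — resolved via $(3)\Rightarrow(4)\Rightarrow(1)$ using \eqref{above estimate} and the identity $T=P_A$ — is a point the paper's terse proof leaves implicit, and your handling of it is sound.
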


\subsection{Narrow operators}\label{s4}
 Let $E$ be a quasi-Banach  symmetric function space on $[0,1]$ and let $X$ be an $F$-space \cite{KPR,Popov-Randri}.
A bounded linear operator $T:E\to X$ is called \emph{narrow} if for each set $A\subset  (0,1)$ and arbitrary $\varepsilon >0$ there exists a sign $x$ on $A$ (i.e., $x$ is a function supported on $A$ and taking values in the set $\{-1,1\}$ on $A$) such that $\norm{Tx}_X< \varepsilon$ \cite[Proposition 1.9(ii)]{Popov-Randri}.

It is well known \cite{Popov-Randri,Plichko-Popov,Popov-2011} that the identity operator on a separable symmetric space $E$ is a sum of two narrow operators bounded on $E$ whenever $E$ has an unconditional basis (equivalently, $E$ is an interpolation space between $L_p$ and $L_q$ for some  $1<p<q<\infty$ \cite[II. p.161]{LT}).
 The main result in this subsection  is linked with the following open problem stated in \cite{PSV}:

 {\it Assume that the identity operator $id$ on a separable symmetric space $E$ on $(0,1)$ may be represented as a sum of two narrow operators bounded on $E$. Does this imply that $E$ has an unconditional basis?}


In Theorem~\ref{t2} below, we show that the identity operator on any separable quasi-Banach symmetric function space $E$ such that $E\subset L_1$ is a sum of two narrow operators (basis projections), which are bounded from $E$ into the optimal range $\mathcal{S}_E$ of the  Calder\'{o}n operator $S$ on $E$ (see Section \ref{S}). This extends the above-mentioned result for  symmetric function spaces   having non-trivial Boyd indices.


\begin{theorem}\label{t2}  If $E$ be a separable quasi-Banach symmetric function space on $(0,1)$ with $E\subset L_1,$ then
the identity operator ${\rm id}:E \to  E$ is a sum of two narrow operators bounded from $E$ into $\mathcal{S}_E$.
\end{theorem}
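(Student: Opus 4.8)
The plan is to exploit the splitting $\mathrm{id} = P_A + P_{A^c}$ with respect to the Haar basis, where $A$ is the index set for which $P_A = T$ (as identified in the proof of Corollary~\ref{maincor}), so $P_{A^c} = T_\epsilon - T$ for the appropriate choice of signs $\epsilon$; in any case both $P_A$ and $P_{A^c}$ are martingale transforms of the form $T_{\epsilon}$. First I would record that since $E$ is separable, the Haar system is a basis in $E$, so each $P_B$ ($B\subset\mathbb N$) is a well-defined linear operator on the dense span of the Haar functions; I then need boundedness $P_B:E\to\mathcal S_E$. This is exactly estimate~\eqref{above estimate}: $\mu(P_B x)\le C_{\mathrm{abs}}\,S\mu(x)$ for every $x\in L_1$, which by the very definition of $\mathcal S_E$ (Definition~\ref{quasi-banach range}) gives $\|P_B x\|_{\mathcal S_E}\le C_{\mathrm{abs}}\|x\|_E$. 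So both summands are bounded from $E$ into $\mathcal S_E$, and $\mathrm{id}=P_A+P_{A^c}$ as operators $E\to E\hookrightarrow \mathcal S_E$ (the inclusion $\mathcal S_E\supset E$ being automatic since $S$ dominates the identity on rearrangements).

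The remaining, and genuinely substantive, point is narrowness of each $P_B:E\to\mathcal S_E$. I would argue directly from the definition: given a measurable set $A_0\subset(0,1)$ of positive measure and $\varepsilon>0$, I must produce a sign $x$ on $A_0$ with $\|P_B x\|_{\mathcal S_E}<\varepsilon$. The natural device is a Rademacher-type oscillation argument: inside $A_0$ choose, for large $n$, a sign $x_n$ that is a sum $\sum_{j} \pm h_{n,k_j}$ of high-frequency Haar functions supported in (a large portion of) $A_0$, normalized so $|x_n|=\chi_{A_0'}$ for some $A_0'\subset A_0$ with $m(A_0')\ge \tfrac12 m(A_0)$. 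Because $x_n$ lies in the span of Haar functions of generation $\ge n$, its image $P_B x_n$ again lies in that span, hence $P_B x_n$ is supported on a set whose structure forces $\mu(P_Bx_n)$ to be small in measure; more precisely $P_B x_n$ is a martingale-transform image of a function equimeasurable with $\chi_{A_0'}$, and by~\eqref{above estimate} $\mu(P_B x_n)\le C_{\mathrm{abs}} S\mu(x_n)=C_{\mathrm{abs}} S\chi_{(0,m(A_0'))}$, which is a fixed function in $\mathcal S_E$ independent of $n$. That alone does not give smallness, so the real input must be that, after restricting to high generations, one can make $\mu(P_Bx_n)$ supported on an arbitrarily short interval: indeed choosing $x_n$ supported on a single dyadic interval $\Delta$ of length $2^{-n}$ (a single sign block) gives $\mu(P_B x_n)\le C_{\mathrm{abs}} S\chi_{(0,2^{-n})}$, and since $E\subset L_1$ forces $\mathcal S_E$ to contain $S\chi_{(0,t)}$ with $\|S\chi_{(0,t)}\|_{\mathcal S_E}\to 0$ as $t\to 0$ (by separability of $E$, equivalently order-continuity of the $E$-norm, which transfers to $\mathcal S_E$ via the construction in the proof of Theorem~\ref{optimal01}), we get $\|P_B x_n\|_{\mathcal S_E}\to 0$. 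The only subtlety is that a sign is required on all of $A_0$, not just on a small dyadic subinterval; this is handled by the standard exhaustion: decompose $A_0$ up to a null set into a disjoint union of dyadic intervals $\Delta_i$, on each $\Delta_i$ put a high-frequency sign block $x^{(i)}$ as above with generation chosen so large that $\|P_B x^{(i)}\|_{\mathcal S_E}<\varepsilon 2^{-i}$, and set $x=\sum_i x^{(i)}$; then $\|P_Bx\|_{\mathcal S_E}\le \sum_i \|P_Bx^{(i)}\|_{\mathcal S_E}<\varepsilon$ if $\mathcal S_E$ is a Banach space, or $\le C\sup_i\varepsilon$-type bound with the right geometric summation in the quasi-Banach case (using the Aoki--Rolewicz theorem to replace the quasi-norm by an equivalent $r$-norm, $\sum_i\|\cdot\|^r$).

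I expect the main obstacle to be precisely the quasi-Banach bookkeeping in this last summation together with verifying that $\|S\chi_{(0,t)}\|_{\mathcal S_E}\to 0$ as $t\to 0^+$. The latter should follow because separability of $E$ means $\chi_{(0,t)}\to 0$ in $E$, and by definition $\|S\chi_{(0,t)}\|_{\mathcal S_E}=\inf\{\|y\|_E:\mu(S\chi_{(0,t)})\le S\mu(y)\}\le \|\chi_{(0,t)}\|_E\to 0$; so this is in fact routine once stated correctly. The genuinely delicate step is the choice of the sign blocks $x^{(i)}$ so that $P_B$ applied to them lands in a controllably small piece of $\mathcal S_E$ — here one must use that each $\mu(P_B x^{(i)})\le C_{\mathrm{abs}}\,\mu(x^{(i)})\cdot(\text{dilation})$ forces the decreasing rearrangement to be supported on an interval of length comparable to $m(\Delta_i)$, which is what makes $\|P_Bx^{(i)}\|_{\mathcal S_E}$ small when $m(\Delta_i)$ is small, and this is exactly the content of~\eqref{above estimate} combined with $\sigma_s$-boundedness on $\mathcal S_E$. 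Once these pieces are in place, the theorem follows by assembling $\mathrm{id}=P_A+P_{A^c}$ with both summands bounded $E\to\mathcal S_E$ and narrow.
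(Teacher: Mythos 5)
Your decomposition $\mathrm{id}=T+(\mathrm{id}-T)=P_A+P_{A^c}$ and the boundedness of both summands from $E$ into $\mathcal S_E$ via \eqref{above estimate} and Definition~\ref{quasi-banach range} are exactly what the paper does, and that part is fine. The gap is in the narrowness argument. The only quantitative control you place on $P_Bx$ is the rearrangement estimate $\mu(P_Bx)\le C_{\rm abs}S\mu(x)$, and this depends on $x$ only through $\mu(x)$. A sign $x^{(i)}$ on a dyadic interval $\Delta_i$ satisfies $\mu(x^{(i)})=\chi_{(0,m(\Delta_i))}$ no matter how high the generation of the Haar functions used to build it, so your bound reads $\|P_Bx^{(i)}\|_{\mathcal S_E}\le C_{\rm abs}\|\chi_{(0,m(\Delta_i))}\|_E$, a fixed positive quantity that cannot be driven below $\varepsilon 2^{-i}$ by ``choosing the generation large''. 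In the exhaustion the lengths $m(\Delta_i)$ are dictated by $A_0$, not by you, and the resulting sum does not tend to $0$ (for $A_0=(0,1)$ the exhaustion is a single interval and the bound is the constant $C_{\rm abs}\|\chi_{(0,1)}\|_E$). A sanity check that the strategy cannot work as stated: it never uses the structure of $B$, so it would equally ``prove'' that $P_{\mathbb N}=\mathrm{id}\colon E\to\mathcal S_E$ is narrow, which is false, since any sign $x$ on $(0,1)$ has $\|x\|_{\mathcal S_E}=\|\chi_{(0,1)}\|_{\mathcal S_E}>0$.

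The missing idea is exact cancellation rather than smallness: the kernels of $T$ and of $\mathrm{id}-T$ contain a sign on \emph{every} dyadic interval. Since $Th_{n,k}=h_{n,k}$ for $n$ odd and $Th_{n,k}=0$ for $n$ even, on $\Delta_m^l$ you may take $x=h_{m,l}$ if $m$ is even and $x=h_{m+1,2l-1}+h_{m+1,2l}$ if $m$ is odd; then $x^2=\chi_{_{\Delta_m^l}}$ and $Tx=0$ exactly (and symmetrically for $\mathrm{id}-T$, swapping parities). Combined with \cite[Lemma~1.12]{Popov-Randri}, which for separable $E$ reduces narrowness to producing such a sign on each dyadic interval, this closes the argument --- and it also makes your quasi-Banach summation worries and the (correct, but here unnecessary) observation that $\|S\chi_{(0,t)}\|_{\mathcal S_E}\le\|\chi_{(0,t)}\|_E\to0$ moot, since every piece is mapped to $0$ on the nose. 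This is the paper's proof.
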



Let $T$ be the operator defined  in \eqref{T-0}.
We write
$${\rm id}=T+({\rm id}-T).$$
To prove Theorem \ref{t2}, it suffices to prove the following lemma.
\begin{lem} Let $E$ be a separable quasi-Banach symmetric function space on $(0,1)$ with $E\subset L_1.$
Then, the operators $T,{\rm id}-T:\,E\to\mathcal{S}_E$ are narrow.
\end{lem}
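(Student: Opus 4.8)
The strategy is to verify narrowness for each of the two operators $T$ and $\mathrm{id}-T$ separately, using the fact that both are martingale transforms with respect to the dyadic filtration and hence act diagonally on the Haar basis. Recall that $T h_{n,k} = h_{n,k}$ precisely when the conditional-expectation jump at level $n$ is included (i.e.\ $n$ is even in the indexing of \eqref{T-0}) and $Th_{n,k}=0$ otherwise; correspondingly $(\mathrm{id}-T)h_{n,k}$ is either $0$ or $h_{n,k}$, with the roles of even and odd levels swapped. Thus both operators are Haar-basis projections $P_A$, $P_{A^c}$ for the complementary sets $A,A^c\subset\mathbb N$ determined by parity of the Haar generation. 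By Corollary \ref{maincor} (and the boundedness $S:E\to\mathcal S_E$ built into Definition \ref{quasi-banach range} and Theorem \ref{optimal01}), these projections are bounded from $E$ into $\mathcal S_E$, so it remains only to check the defining narrowness property.

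\textbf{Reduction to a local oscillation estimate.} Fix a measurable $A\subset(0,1)$ and $\varepsilon>0$. Because $E$ is separable, simple functions are dense, and it suffices (by a standard approximation, using boundedness of $P_A$ and $P_{A^c}$ into $\mathcal S_E$) to produce a sign $x$ on $A$ with $\|Tx\|_{\mathcal S_E}<\varepsilon$ after first replacing $A$ by a finite union of dyadic intervals approximating it from inside up to small measure. On a single dyadic interval $\Delta$ of generation $m$, one constructs the sign $x=\pm 1$ on $\Delta$ out of Haar functions $h_{n,k}$ with $\mathrm{supp}\,h_{n,k}\subset\Delta$, i.e.\ with $n\ge m$: concretely, choosing $x$ to be a Rademacher-like alternation that is $\cF_N$-measurable and has vanishing conditional expectations $\bE_n x$ for all $n$ up to some large $N$, one can arrange that the Haar expansion of $x$ lives in generations $n\in[m,N)$. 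Since $T x=\sum_{n \text{ even}}(\bE_n x - \bE_{n-1}x)$ only sees those jumps, and $x$ is a high-frequency function, $Tx$ is supported near the "top" of $\Delta$ only through its own Haar coefficients; the key point is that $\mu(Tx)$ is controlled by a function supported on a set of measure $\le m(\Delta)$, and driving $N\to\infty$ makes the relevant part of the $\mathcal S_E$-norm small because $\mathcal S_E\subset L_{1,\infty}$ is (order-continuous enough on) a space where characteristic functions of shrinking sets have quasi-norm tending to $0$ — this last fact follows from separability of $E$ together with the defining formula $\|x\|_{\mathcal S_E}=\inf\{\|y\|_E:\mu(x)\le S\mu(y)\}$ and continuity of $S$ on small-support functions.

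\textbf{Gluing over the pieces of $A$.} Having produced, on each of the finitely many dyadic intervals $\Delta_1,\dots,\Delta_r$ comprising (most of) $A$, a sign $x_j$ on $\Delta_j$ that is $\cF_{N_j}$-measurable with controlled $\|Tx_j\|_{\mathcal S_E}$, set $x=\sum_j x_j$ on $\bigcup_j\Delta_j$ and $x=1$ on the small remainder $A\setminus\bigcup_j\Delta_j$. Then $Tx=\sum_j Tx_j + T(\chi_{A\setminus\bigcup\Delta_j})$; the first sum has disjointly supported summands, so its rearrangement is dominated (up to the quasi-norm constant of $\mathcal S_E$) by the "sum" of the individual rearrangements, and the last term is small by boundedness of $T:E\to\mathcal S_E$ applied to a function of small $E$-norm. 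Choosing all $N_j$ large and the remainder of small enough measure gives $\|Tx\|_{\mathcal S_E}<\varepsilon$. The identical argument with $A^c$ in place of the parity-selection set handles $\mathrm{id}-T$.

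\textbf{Main obstacle.} The delicate point is the local estimate: showing that a carefully chosen high-frequency sign $x$ on a dyadic interval $\Delta$ has $\|Tx\|_{\mathcal S_E}$ as small as we like. Unlike the $L_p$ or Boyd-index setting, here $\mathcal S_E$ need not be separable and the quasi-norm of a characteristic function $\chi_B$ need not tend to $0$ as $m(B)\to 0$ in general — so one must exploit that $x$ itself comes from $E$ (which \emph{is} separable) via $\mathrm{id}-T$ being bounded $E\to\mathcal S_E$, turning smallness of $\|x\|_E$ (achieved by the oscillation/averaging construction making $x$ "spread out" at a fine scale, so that $\mu(x)=\chi_{(0,m(\Delta))}$ has small $E$-norm by separability) into smallness of $\|Tx\|_{\mathcal S_E}$. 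Making this quantitative, and in particular verifying that the averaging construction genuinely forces $\|x\|_E=\|\chi_{(0,m(\Delta))}\|_E$ small while $x$ remains a \emph{sign} on $\Delta$, is where the real work lies; everything else is bookkeeping with disjoint supports and quasi-triangle inequalities.
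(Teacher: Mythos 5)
Your proposal does not close the one step that actually matters, and the mechanism you propose for it is wrong. The reduction to dyadic intervals for a separable domain space is standard (the paper invokes \cite[Lemma 1.12]{Popov-Randri} for exactly this), so everything hinges on the ``local oscillation estimate'': producing a sign $x$ on a dyadic interval $\Delta_m^l$ with $\|Tx\|_{\mathcal S_E}$ small. You propose to achieve this by taking a high-frequency, mean-zero, $\cF_N$-measurable sign and ``driving $N\to\infty$''. But frequency is irrelevant here: if $x$ is $\cF_N$-measurable with $\bE_n x=0$ for $n<N$, then $\bE_nx-\bE_{n-1}x$ vanishes except at $n=N$, so $Tx=x$ when $N$ is even and $Tx=0$ when $N$ is odd. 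In the even case $\|Tx\|_{\mathcal S_E}=\|x\|_{\mathcal S_E}$ does not decrease at all as $N$ grows; in the odd case there is nothing to estimate. So the limiting/approximation machinery you build (and the worry about order continuity of $\mathcal S_E$, which you correctly note is unavailable) cannot produce the estimate --- and, as you yourself concede in your ``main obstacle'' paragraph, you have not produced it. What decides the matter is the \emph{parity} of the Haar level, not the fineness of the oscillation.

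The paper's proof exploits exactly this: $Th_{n,k}=0$ when $n$ is even and $Th_{n,k}=h_{n,k}$ when $n$ is odd, and every Haar function $h_{n,k}$ is itself a sign on its supporting dyadic interval $\Delta_n^k$. Hence for $\Delta_m^l$ one takes $x=h_{m,l}$ if $m$ is even, and $x=h_{m+1,2l-1}+h_{m+1,2l}$ (a sign on $\Delta_m^l$ built from the two children, which live at the even level $m+1$) if $m$ is odd; in both cases $x^2=\chi_{_{\Delta_m^l}}$ and $Tx=0$ \emph{exactly}. No approximation of general sets $A$, no gluing, no smallness estimates in $\mathcal S_E$ are needed. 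I would also flag that your gluing step leans on iterated quasi-triangle inequalities over a growing number of pieces, which is delicate in a quasi-Banach space; the exact-annihilation construction sidesteps this entirely.
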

\begin{proof} As above, $h_{n,k}$'s are Haar functions. Recall that
$$Th_{n,k}=
\begin{cases}
h_{n,k},& n\mbox{ is odd}\\
0,& n\mbox{ is even}
\end{cases},\quad
({\rm id}-T)h_{n,k}=
\begin{cases}
h_{n,k},& n\mbox{ is even}\\
0,& n\mbox{ is odd}
\end{cases}.
$$

We only prove the assertion for  the operator $T$   as the argument for $ {\rm id}-T$ follows {\it mutatis mutandi}.

%

 Since $E$ is separable, it follows from  \cite[Lemma 1.12]{Popov-Randri} that it
suffices to prove that for any dyadic interval $\Delta_m^l=[\frac{l-1}{2^m},\frac{l}{2^m})$ for any $m=0,1,\cdots$ and $l=1,\cdots, 2^m$, there exists $x \in E$ with $x^2=\chi_{_{\Delta _m^l}}$ and $Tx=0$. 

Observe that
$$\Delta_m^l=\Delta_{m+1}^{2l-1}+\Delta_{m+1}^{2l} = \Delta_{m+2}^{4l-3}+\Delta_{m+2}^{4l-2}+\Delta_{m+2}^{4l-1}+\Delta_{m+2}^{4l}.$$
Letting
$$x=\begin{cases}h_{m,l}, &\mbox{if $m$ is even};\\
 h_{m+1,2l-1}+ h_{m+1,2l}, &\mbox{if $m$ is odd},
\end{cases}
$$
we have $x^2 =\chi_{_{\Delta_m^l}}$ and $Tx=0.$ This completes the proof.
\end{proof}


\begin{thebibliography}{99}


\bibitem{Astashkin20}
S.   Astashkin, {\it The Rademacher system in function spaces}, Birkhauser, 2020.

\bibitem{AstMil21}
S.   Astashkin, M. Milman, {\it Extrapolation Stories and Problems,} Pure and Appl. Funct. Anal. \textbf{6} (2021), No. 3, 651--707.



\bibitem{Banuelos}
R. Banuelos,
{\it The foundational inequalities of D.L. Burkholder and some of their ramifications,}
Illinois J. Math. \textbf{54} (2012), 789--868.


\bibitem{BSh} C. Bennett, R. Sharpley, {\it Interpolation of Operators}, Pure and Applied Mathematics, {\bf 129}. Academic Press, 1988.



\bibitem{B} D. Boyd, {\it The Hilbert transform on rearrangement-invariant spaces}, Can. J. Math., {\bf 19} (1967), 599--616.

\bibitem{Bur}
D. Burkholder, {\it Martingale transforms}, Ann. Math. Statist. \textbf{37} (1966), 1494--1504.

\bibitem{Burkholder-82} D. Burkholder,  {\it  A nonlinear partial differetial equation and the unconditional constant of the Haar system in $L^p$}, Bull. (New Ser.) AMS, 1982, Vol. 7, No. 3, 591--595.

\bibitem{Burkholder-84} D. Burkholder,  {\it  Boundary value problems and sharp inequalities for martingale transforms}, Ann. Probab. {\bf 12} (3) (1984),  647--702.


 \bibitem{Burkholder-01} D. Burkholder,  {\it   Martingales and singular integrals in Banach spaces,}Handbook of the geometry of Banach spaces, Vol. I, 233--269, North-Holland, Amsterdam, 2001.

\bibitem{cal} A. P. Calder\'{o}n, \textit{Spaces between }$L^{1}$\textit{ and }$L^{\infty}$\textit{\ and the theorem of Marcinkiewicz}, Studia Math. \textbf{26} (1966), 273--299.

\bibitem{HLP}
G.   Hardy, J.  Littlewood,   G. Polya. {\it Inequalities}, Cambridge University
Press, London, second edition, 1952.











\bibitem{HSZ}
J. Huang, F. Sukochev, D. Zanin,
{\it Optimal estimates for martingale transforms,}
J. Funct. Anal. (2022).


\bibitem{HNVW}
T. Hytonen, J. van Neerven, M. Veraar, L. Weis,
{\it Analysis in Banach spaces, vol. I
Martingales and Littlewood--Paley theory},
 Ergebnisse der Mathematik und ihrer Grenzgebiete. 3. Folge. A Series of Modern Surveys in Mathematics, 63,
Springer, Cham, 2016.

\bibitem{jm} B.~Jawerth, M.~Milman, \textit{Extrapolation theory with
Applications}, Mem. Amer. Math. Soc. 1991, V.~89, No.~440.

\bibitem{JSWZ}
Y. Jiao, F. Sukochev, L. Wu, D. Zanin, {\it
Distributional inequalities for non-commutative martingales},  arxiv.org/abs/2103.08847.
\bibitem{KPR}
N. Kalton, N. Peck, J. Rogers, {\it An F-Space Sampler,} London Math. Soc. Lecture Note Ser., vol.89, Cambridge University Press, Cambridge, 1985.

\bibitem{KS}
B.  Kashin,  A. Saakyan, {\it Orthogonal series,} Amer. Math. Soc., Providence
RI, 1989.



\bibitem{KPS} S. Krein, Y. Petunin, and E. Semenov, {\it Interpolation of linear operators}, Amer. Math. Soc., Providence, R.I., (1982).

\bibitem{LSU}
O.  Lelond, E.  Semenov, S.   Uksusov,
{\it The space of Fourier--Haar
multipliers,}
 Siberian Math. J. \textbf{46} (2005), 103--110.

\bibitem{LT} J. Lindenstrauss, L. Tzafiri, {\it Classical Banach spaces}. Springer-Verlag, I, II, (1979).



	
\bibitem{NS}
I. Novikov, E.  Semenov, {\it Haar series abd linear operators}, Kluwer Acad. Publ. Amsterdam, 1997. 	

\bibitem{Paley}
R. Paley,
{\it A remarkable series of orthogonal functions,}
Proc. London Math. Soc. {\bf 34} (1932), 241--264.


\bibitem{Pisier}
G. Pisier,
{\it Martingales in Banach spaces,}
Cambridge studies in advanced mathematics 155, Cambridge University Press, 2016.


\bibitem{Plichko-Popov} A. Plichko, M. Popov, {\it Symmetric function spaces on atomless probability spaces.} Dissertationes Math. (Rozprawy Mat.) {\bf 306} (1990), 85 pp.

\bibitem{Popov-2011}M. Popov,  {\it Narrow operators (a survey).} Function spaces IX, 299--326, Banach Center Publ., 92, Polish Acad. Sci. Inst. Math., Warsaw, 2011.

\bibitem{Popov-Randri} M. Popov, B. Randrianantoanina, {\it Narrow operators on function spaces and vector lattices.} De Gruyter Studies in Mathematics {\bf 45},  Berlin, 2013.

\bibitem{PSV} M. Popov, E. Semenov, D. Vatsek, {\it Some problems on narrow operators on function spaces.} Cent. Eur. J. Math. {\bf 12} (2014), no. 3, 476--482.



\bibitem{Francia}
J. Rubio de Francia,
{\it Martingale and integral transforms of Banach space valued functions,}
in Probability and Banach spaces (Zaragoza, 1985). Lecture Notes in Math., 1221, Springer, Berlin, 1986, pp. 195--222.

\bibitem{semen} E. M. Semenov, {\it Estimates for operators of weak type}, Funct. anal. \& related topics (Sapporo, 1990), 172--178, World Sci. Publ., River Edge, NJ, 1991.

\bibitem{Sh-80} R. Sharpley, {\it Counterexamples for classical operators on Lorentz-Zygmund spaces}, Studia Math. {\bf 68} (1980),  141--158.















\bibitem{STZ} F.  Sukochev, K. Tulenov, D. Zanin, {\it The optimal range of the Calder\' on operator and its applications.} J. Funct. Anal. {\bf 277} (10) (2019),  3513--3559.

 \bibitem{STZ2} F.  Sukochev, K. Tulenov, D. Zanin,   {\it
 The boundedness of the Hilbert transformation from one rearrangement invariant Banach space into another and applications},
 Bull. Sci. Math. {\bf 167} (2021), 102943.




\end{thebibliography}
\end{document}